\begin{document}
\title{An optimal transport problem with backward martingale
  constraints motivated by insider trading.}

\author{Dmitry Kramkov\footnote{Carnegie Mellon University, Department
    of Mathematical Sciences, 5000 Forbes Avenue, Pittsburgh, PA,
    15213-3890, USA. {\it Email: }kramkov@cmu.edu } \and Yan
  Xu\footnote{Carnegie Mellon University, Department of Mathematical
    Sciences, 5000 Forbes Avenue, Pittsburgh, PA, 15213-3890,
    USA. {\it Email: }yanx1@andrew.cmu.edu}}

\date{\today}

\maketitle

\begin{abstract}
  We study a single-period optimal transport problem on $\R^2$ with a
  covariance-type cost function $c(x,y) = (x_1-y_1)(x_2-y_2)$ and a
  backward martingale constraint.  We show that a transport plan
  $\gamma$ is optimal if and only if there is a maximal monotone set
  $G$ that supports the $x$-marginal of $\gamma$ and such that
  $c(x,y) = \min_{z\in G}c(z,y)$ for every $(x,y)\in \supp\gamma$.  We
  obtain sharp regularity conditions for the uniqueness of an optimal
  plan and for its representation in terms of a map.  Our study is
  motivated by a variant of the classical Kyle model of insider
  trading from \citet{RochVila:94}.
\end{abstract}

\begin{description}
\item[Keywords:] martingale optimal transport, Kyle equilibrium.
\item[AMS Subject Classification (2010):] 60G42, 91B24, 91B52.
  % \item[JEL Classification:]
\end{description}

\section{Introduction}
\label{sec:introduction}

Let $(\Omega, \cF, \mathbb{P})$ be a probability space and
$Y = (Y_1, Y_2)$ be a $2$-dimensional random variable with finite
second moment: $Y\in \mathcal{L}^2(\Omega, \cF, \mathbb{P})$. Our goal
is to
\begin{equation}
  \label{eq:1}
  \text{minimize} \quad \EP{c(X,Y)} % = \EP{(X_1-Y_1)(X_2-Y_2)}
  \quad \text{over} \quad X\in \mathcal{X}(Y),
\end{equation}
for the cost function $c(x,y) = (x_1 - y_1)(x_2 - y_2)$,
$x,y\in \R^2$, and the domain $\mathcal{X}(Y)$ that consists of
$Y$-measurable random variables $X=(X_1,X_2)$ such that $(X,Y)$ is a
martingale: $\cEP{Y}{X} = X$. A relaxation of the $Y$-measurability
constraint on $X$ leads to the optimal transport problem:
\begin{equation}
  \label{eq:2}
  \text{minimize}\int c(x,y) d\gamma  \quad \text{over} \quad \gamma
  \in \Gamma(\nu),
\end{equation}
where $\nu=\law{Y}$ and $\Gamma(\nu)$ is the family of probability
measures $\gamma = \gamma(dx,dy)$ on $\R^2 \times \R^2$ that have
$\nu$ as their $y$-marginal: $\gamma(\R^2,dy) = \nu(dy)$, and make a
martingale out of the canonical process: $\gamma(y|x) = x$.  In view
of the martingale constraint, problem~\eqref{eq:2} admits an
equivalent formulation:
\begin{align*}
  \text{maximize} \quad \int x_1x_2 d\gamma  \quad \text{over} \quad
  \gamma \in \Gamma(\nu),
\end{align*}
and thus, has a natural connection to the classical
Fr\'echet-Hoeffding inequality and the Wasserstein $2$-distance.

Problem~\eqref{eq:2} exhibits a \emph{backward} structure in the sense
that the initial marginal $\mu(dx) = \gamma(dx,\R^2)$ is part of the
solution. In this regard, it differs from the ``standard''
single-period martingale transport problem % studied
in \citet{BeigJuil:16}, \citet{BeigNutzTouz:17}, \citet{HenrTouz:16},
and \citet{GhousKimLim:19}, among others, where both the initial and
terminal marginals are fixed. We point out that for our cost function
$c(x,y)=(x_1-y_1)(x_2-y_2)$, the standard problem is trivial, as every
martingale measure $\gamma=\gamma(dx,dy)$ with given marginals
$\mu=\mu(dx)$ and $\nu = \nu(dy)$ produces the same average cost:
\begin{displaymath}
  \int c(x,y) d\gamma = \int y_1y_2 d\nu - \int x_1x_2 d\mu. 
\end{displaymath}

Our work is motivated by the classical \citet{Kyle:85} equilibrium
with insider from financial economics. More precisely, we consider the
model from~\citet{RochVila:94}, where the insider observes \emph{both}
the terminal value $V$ of the risky asset and the order flow $U$ of
the noise traders; see Section~\ref{sec:equil-with-insid} for details.
Setting $Y=(U,V)$ we establish in Theorem~\ref{th:6} the equivalence
between the existence of equilibrium and that of an optimal \emph{map}
$X$ for~\eqref{eq:1} such that $\gamma=\Law{(X,Y)}$ is an optimal
\emph{plan} for~\eqref{eq:2}. Moreover, the components of $X=(R,S)$
are naturally identified as equilibrium's total order $R$ and price
$S$. To the best of our knowledge, the connection between the Kyle
equilibrium and a martingale optimal transport is new.

The main results of the paper are Theorems~\ref{th:1}
and~\ref{th:5}. In Theorem~\ref{th:1} we prove the existence of an
optimal {plan} for~\eqref{eq:2} and characterize its support.  We show
that $\gamma\in \Gamma(\nu)$ is optimal if and only if there is a
\emph{maximal monotone set} $G$ in $\R^2$ that supports the
$x$-marginal of $\gamma$ and such that
\begin{equation}
  \label{eq:3}
  c(x,y) = \phi_G(y) \set \inf_{z\in G}c(z,y), \quad (x,y) \in \supp
  \gamma. 
\end{equation}
Geometrically, the support of ${\gamma}$ has the \emph{hyperbolic
  tangent property}: it connects $y\not \in G$ to those $x\in G$, that
are touched by the hyperbola
\begin{displaymath}
  H = \descr{z\in \R^2}{c(y,z) = \phi_G(y)} = \descr{z\in \R^2}{z_2 =
    y_2 + \frac{\phi_G(y)}{z_1-y_1}};
\end{displaymath}
see Figure~\ref{fig:1}.  Surprisingly, as a consequence
of~\eqref{eq:3}, the optimal plan $\gamma$ possesses properties of
solutions to classical unconstrained problems. By
Corollary~\ref{cor:1}, the $x$-marginal of $\gamma$ is a
Fr\'echet-Hoeffding coupling between its first and second coordinates,
while, by Corollary~\ref{cor:2}, $\gamma$ is a classical optimal
coupling between its $x$- and $y$-marginals.

In Theorem~\ref{th:2} we show that the set $G$ from~\eqref{eq:3} is a
solution of the \emph{dual} problem:
\begin{equation}
  \label{eq:4}
  \text{maximize} \quad \int \phi_G d\nu \quad \text{over}\quad G\in
  \mathfrak{M},   
\end{equation}
where $\mathfrak{M}$ is the family of all {maximal monotone} sets in
$\R^2$, and that primal and dual problems~\eqref{eq:2}
and~\eqref{eq:4} have identical values. The dual problem appears in
\citep[Eq.~(2.3)]{RochVila:94}, where $G$ stands for the graph of a
pricing rule. When $\nu=\law{Y}$ has a Gaussian or, more generally,
elliptically contoured distribution, $G$ becomes a line with strictly
positive slope; see Example~\ref{ex:1}.

In Theorem~\ref{th:3}, we show that optimal {map} and {plan}
problems~\eqref{eq:1} and~\eqref{eq:2} have identical values, provided
that $\nu$ is atomless. The result is similar to that of
\citet{Prat:07} for the classical unconstrained case.  The existence
of an optimal map $X$ for~\eqref{eq:1} that induces an optimal plan
$\gamma=\law{X,Y}$ for~\eqref{eq:2} is obtained in Theorem~\ref{th:4}
under the condition that $\nu$ gives zero mass to the graphs of
\emph{strictly decreasing} Lipschitz functions. This assumption is
weaker than the standard regularity condition of the Brenier theorem,
see~\citep[Theorem~1.26]{AmbrGigli:13}, that requires $\nu$ to assign
zero mass to rotations of the graphs of Lipschitz functions.  Our
second main result, Theorem~\ref{th:5}, establishes the uniqueness of
solutions to~\eqref{eq:1} and~\eqref{eq:2} if, in addition, the
(one-dimensional) distribution functions of $Y_1$ and $Y_2$ are
continuous.  Examples~\ref{ex:2} and~\ref{ex:3} show that the
conditions of Theorems~\ref{th:3} and~\ref{th:4} are sharp.

Being applied to the model of \citet{RochVila:94}, Theorems~\ref{th:4}
and~\ref{th:5} yield sufficient conditions for the existence and
uniqueness of equilibria, which are stated in
Theorem~\ref{th:7}. These assumptions generalize those
in~\citet{RochVila:94}, where $Y=(U,V)$ is required to have a
continuous compactly supported density in $\R^2$.  \citet{RochVila:94}
work with dual problem~\eqref{eq:4} and rely on the properties of the
space of closed graph correspondences endowed with the Hausdorff
topology.

Finally, Appendix~\ref{sec:density-cw_2rd} contains a density result
for the Wasserstein spaces, for which we could not find a ready
reference, while Appendix~\ref{sec:dual-space-functions} collects the
properties of the function $\phi_G$ from~\eqref{eq:3}.

\section{A backward martingale optimal transport problem}
\label{sec:optim-transp-probl}

We denote by $\mathcal{P}_2(\R^d)$ the family of Borel probability
measures with finite second moments and by $\mathcal{B}(\R^d)$ the
Borel $\sigma$-algebra on $\R^d$.  For a Borel probability measure
$\mu$ on $\R^d$, a $\mu$-integrable $m$-dimensional Borel function
$f=(f_1,\dots,f_m)$, and an $n$-dimensional Borel function
$g = (g_1,\dots,g_n)$, the notation $\mu(f|g)$ stands for the
$m$-dimensional vector of conditional expectations of $f_i$ given $g$
under $\mu$:
\begin{displaymath}
  \mu(f|g) = (\mu(f_1|g_1,\dots,g_n),\dots,\mu(f_m|g_1,\dots,g_n)).
\end{displaymath}
Similarly, $\int f d\mu = (\int f_1 d\mu, \dots, \int f_m d\mu)$.  We
write a point in $\R^4 = \R^2\times \R^2$ as $(x,y)$, where
$x=(x_1,x_2)$ and $y=(y_1,y_2)$ belong to $\R^2$, and think about $x$
and $y$ as the initial and terminal values of the canonical
two-dimensional process.

Let $\nu = \nu(dy)\in \mathcal{P}_2(\R^{2})$. We denote by
$\Gamma(\nu)$ the family of probability measures
$\gamma = \gamma(dx,dy) \in \mathcal{P}_2(\R^{2}\times \R^2)$ that
have $\nu$ as their $y$-marginal and make a martingale out of the
canonical process:
\begin{displaymath}
  \Gamma(\nu) \set \descr{\gamma \in
    \mathcal{P}_2(\R^2\times \R^2)}{\gamma(\R^2,dy) = \nu(dy) \text{
      and }
    \gamma(y|x)=x}. 
\end{displaymath}
Our goal is to
\begin{equation}
  \label{eq:5}
  \text{minimize} \quad \int c(x,y)d\gamma \quad \text{over} \quad
  \gamma\in \Gamma(\nu) 
\end{equation}
for the \emph{covariance-type} cost function
\begin{displaymath}
  c(x,y) =  (x_1 - y_1)(x_2-y_2), \quad x,y\in \R^2.  
\end{displaymath} 
Problem~\eqref{eq:5} belongs to the class of optimal transport
problems with \emph{backward} martingale constraints, in the sense
that the initial $x$-marginal is part of the solution. As we shall see
in Section~\ref{sec:equil-with-insid}, such problem naturally arises
in the study of the Kyle-type equilibrium with insider.

\begin{Remark}
  \label{rem:1}
  Problem~\eqref{eq:5} admits several equivalent formulations. For
  instance, it has same solutions as the one, where we
  \begin{equation}
    \label{eq:6}
    \text{maximize} \int x_1 x_2 d\gamma \quad\text{over}\quad \gamma\in
    \Gamma(\nu). 
  \end{equation} 
  The justification comes from the identity
  \begin{align*}
    \int c(x,y) d\gamma &= \int (x_1 - y_1)(x_2-y_2) d\gamma = \int
                          (y_1y_2 - x_1x_2)d\gamma  \\
                        &= \int y_1 y_2 d\nu - \int x_1 x_2 d\gamma,
  \end{align*}
  where the second equality holds by the martingale property of
  $\gamma \in \Gamma(\nu)$.
\end{Remark}

For a Borel probability measure $\gamma$ on $\R^d$ we denote by
$\supp{\gamma}$ its \emph{support}, that is, the smallest closed set
with full measure.  We recall that a set $G\subset \R^{2}$ is
\emph{monotone} if
\begin{displaymath}
  c(r,s) = (r_1 - s_1)(r_2-s_2) \geq 0,  \quad r,s\in G. 
\end{displaymath}
A monotone set $G$ is \emph{maximal} if it is not a proper (or strict)
subset of a monotone set. We denote by $\mathfrak{M}$ the family of
maximal monotone sets in $\R^2$. It is well-known that
$G\in \mathfrak{M}$ if and only if $G$ is the graph of the
subdifferential of a proper closed convex function on $\R$.

For $G\in \mathfrak{M}$ we define a function
\begin{displaymath}
  \phi_G(y) \set \inf_{x\in G} c(x,y) = \inf_{x\in G}
  (x_1-y_1)(x_2-y_2), \quad y\in \R^2.  
\end{displaymath}
Such functions $\phi_G$ will play a key role in our study. Their
properties are collected in
Appendix~\ref{sec:dual-space-functions}. In particular,
Lemma~\ref{lem:16} states that $\phi_G$ takes values in $[-\infty,0]$
and $G = \descr{x\in \R^2}{\phi_G(x)=0}$.

The main results of the paper are Theorems~\ref{th:1}
and~\ref{th:5}. Theorem~\ref{th:1} establishes the existence of an
optimal plan $\gamma$ for~\eqref{eq:5} and shows the structure of its
support. Theorem~\ref{th:5} contains a uniqueness result.

\begin{Theorem}
  \label{th:1}
  Let $\nu \in \mathcal{P}_2(\R^2)$. An optimal plan for~\eqref{eq:5}
  exists. For a probability measure $\gamma \in \Gamma(\nu)$ the
  following conditions are equivalent:
  \begin{enumerate}[label = {\rm (\alph{*})}, ref={\rm (\alph{*})}]
  \item \label{item:1} $\gamma$ is an optimal plan for~\eqref{eq:5}.
  \item \label{item:2} If points $(x^0,y^0)$ and $(x^1, y^1)$ belong
    to $\supp{\gamma}$, then
    \begin{equation}
      \label{eq:7}
      (1-t) c(x^0,y^0) + t c(x^1,y^1) \leq t (1-t) c(y^0, y^1), \quad  
      t\in [0,1].   
    \end{equation}
  \item \label{item:3} There is $G\in \mathfrak{M}$ such that
    \begin{equation}
      \label{eq:8}
      c(x,y) \leq  \phi_G(y), \quad (x,y)\in \supp{\gamma}.
    \end{equation}
  \end{enumerate}
  Moreover, if $G$ is a maximal monotone set satisfying~\eqref{eq:8}
  and $\mu$ is the $x$-marginal of $\gamma$, then $G$ contains
  $\supp{\mu}$ and
  \begin{displaymath}
    c(x,y) = \phi_G(y) = \min_{z\in G} c(z,y), \quad (x,y)\in \supp{\gamma}.
  \end{displaymath}
\end{Theorem}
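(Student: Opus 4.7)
The plan is to prove the equivalences via the cycle $(a)\Rightarrow(b)\Rightarrow(c)\Rightarrow(a)$ and to extract existence from weak compactness of $\Gamma(\nu)$. The set $\Gamma(\nu)\subset\mathcal{P}_2(\R^2\times\R^2)$ is weakly compact: the backward martingale constraint forces $\int |x|^2 d\mu\le \int |y|^2 d\nu$ uniformly in $\gamma\in\Gamma(\nu)$, giving tightness, and both the $y$-marginal and martingale conditions pass to weak limits on sets of uniformly bounded second moments. By Remark~\ref{rem:1}, the problem reduces to maximizing the weakly continuous functional $\gamma\mapsto\int x_1 x_2\,d\gamma$ on this set, so a minimizer exists.

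\textbf{$(a)\Rightarrow(b)$.} A direct algebraic identity, $t(1-t)c(y^0,y^1)=(1-t)c(z^t,y^0)+t\,c(z^t,y^1)$ with $z^t\set(1-t)y^0+t y^1$, recasts \eqref{eq:7} as $(1-t)c(x^0,y^0)+t\,c(x^1,y^1)\le(1-t)c(z^t,y^0)+t\,c(z^t,y^1)$. If this inequality failed on small product neighborhoods of some $(x^0,y^0),(x^1,y^1)\in\supp\gamma$, I would construct a strictly better competitor by shifting mass into two atoms at $(z^t,y^0),(z^t,y^1)$ with weights $1-t$ and $t$: the $y$-marginal is preserved trivially, and the martingale property at the new $x$-value $z^t$ holds because its conditional $y$-mean is $(1-t)y^0+t y^1=z^t$. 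Preserving the martingale at the points of mass removal requires a careful localization/disintegration argument together with measurable selection, which is the delicate technical core of the step.

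\textbf{$(b)\Rightarrow(c)$ and the final assertion.} I first show $\supp\mu$ is monotone: for $x^0,x^1\in\supp\mu$ and associated pairs $(x^i,y^i)\in\supp\gamma$, combine \eqref{eq:7} with the martingale identity $x^i=\gamma(y|x^i)$, via integration over the conditional distributions of $y$ given $x^i$, to conclude $c(x^0,x^1)\ge 0$. Extend $\supp\mu$ to a maximal monotone $G\in\mathfrak{M}$ via Zorn's lemma, with the extension chosen compatibly with \eqref{eq:8} — concretely $G=\graph\partial\psi$ for a convex $\psi$ canonically associated with $\supp\mu$. The inclusion $\supp\mu\subset G$ gives $c(x,y)\ge\phi_G(y)$ on $\supp\gamma$ for free; the matching upper bound $c(x,y)\le\phi_G(y)$, equivalent to each such $x$ minimizing $c(\cdot,y)$ over $G$, is obtained by testing \eqref{eq:7} against points $z\in G$ and invoking the convex-analytic structure of maximal monotone sets. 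Together these yield $c(x,y)=\phi_G(y)=\min_{z\in G}c(z,y)$ on $\supp\gamma$, which is the final clause of the theorem.

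\textbf{$(c)\Rightarrow(a)$ and main obstacle.} Integrating \eqref{eq:8} against $\gamma$ yields $\int c\,d\gamma\le\int\phi_G\,d\nu$. For any competitor $\tilde\gamma\in\Gamma(\nu)$ with $x$-marginal $\tilde\mu$, I combine Young's inequality $x_1 x_2\le\psi(x_1)+\psi^*(x_2)$ (with $G=\graph\partial\psi$) with Jensen's inequality applied coordinatewise via the martingale identity $x_i=E[y_i|x]$ and convexity of $\psi,\psi^*$ to get $\int x_1 x_2\,d\tilde\mu\le\int[\psi(y_1)+\psi^*(y_2)]\,d\nu$. By Remark~\ref{rem:1}, this rearranges to $\int c\,d\tilde\gamma\ge\int[y_1 y_2-\psi(y_1)-\psi^*(y_2)]\,d\nu$; the dual representation of $\phi_G$ from Appendix~\ref{sec:dual-space-functions} is what matches this bound to $\int\phi_G\,d\nu$. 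The two most delicate points will be: (i) the martingale-preserving perturbation in $(a)\Rightarrow(b)$, since $\gamma$ may be atomless and the martingale must be maintained everywhere after the shift; and (ii) closing the duality gap in $(c)\Rightarrow(a)$, as generically $\phi_G(y)\ge y_1 y_2-\psi(y_1)-\psi^*(y_2)$ with strict inequality possible, because the infimum defining $\phi_G$ is constrained to the graph $G$ rather than taken separately in each coordinate.
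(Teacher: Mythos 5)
The most serious gap is in $(b)\Rightarrow(c)$. Your plan --- establish that $\supp\mu$ is monotone by integrating \eqref{eq:7} against the conditionals of $y$ given $x^0$ and $x^1$, then extend to $G\in\mathfrak{M}$ --- does not close, because the conditional covariances do not drop out. Writing $V_i=\gamma(y_1y_2\mid x^i)-x^i_1x^i_2$, the integrated inequality reads $(1-t)^2V_0+t^2V_1\le t(1-t)\,c(x^0,x^1)$, and \eqref{eq:7} applied at $x^0=x^1=x^i$ gives $V_i\le 0$; optimizing over $t$ then yields only $c(x^0,x^1)\ge -2\sqrt{V_0V_1}$, a non-positive lower bound rather than $\ge 0$. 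Moreover, even granted monotonicity, an arbitrary maximal monotone extension of $\supp\mu$ has no reason to satisfy \eqref{eq:8}, and \eqref{eq:7} cannot be \emph{tested against $z\in G$} since it only constrains points of $\supp\gamma$. The paper's Lemmas~\ref{lem:5} and~\ref{lem:6} take a genuinely different route: \eqref{eq:7} is re-read as the non-crossing of two families of increasing hyperbolas, so the two open sets $B^0$ and $B^1$ they carve out are disjoint and either boundary is a maximal monotone separator --- that global $G$ is the one satisfying \eqref{eq:8}. The inclusion $\supp\mu\subset G$ then comes out as a \emph{consequence} of the duality (Lemma~\ref{lem:8}), not as a prior step.

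The remaining implications also have missing ideas, which you partly acknowledge. In $(a)\Rightarrow(b)$, shifting mass to $(z^t,y^0)$ and $(z^t,y^1)$ does not by itself preserve the martingale at the donor points; the paper's device (Lemma~\ref{lem:3}) is to perturb the plan to $\widetilde\gamma=\gamma+\varepsilon(\zeta-\eta)$ with $\zeta=\delta_q\otimes\eta(\R^2,\cdot)$ at a non-atom $q$ of $\mu$ and then \emph{re-project} $x$ onto $\widetilde X=\widetilde\gamma(y|x)$ --- the re-projection is the essential trick you are missing, particularly for atomless $\gamma$. In $(c)\Rightarrow(a)$, you correctly diagnose that the Young/Jensen split gives only $\phi_G(y)\ge y_1y_2-\psi(y_1)-\psi^*(y_2)$ with a possible strict gap because the infimum defining $\phi_G$ is constrained to $G$; the paper's Lemma~\ref{lem:7} never decouples $x_1$ and $x_2$, instead exploiting the exact martingale identity $\gamma(c(y,r)-c(x,y)\mid x)=c(x,r)$ for each $r\in\R^2$ and taking an infimum over a dense countable subset of $G$, to get $\gamma(\phi_G(y)-c(x,y)\mid x)\le\phi_G(x)\le 0$. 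That cost-adapted conditional inequality simultaneously delivers the dual bound \eqref{eq:15}, optimality, $\supp\mu\subset G$, and the final clause of the theorem via Lemma~\ref{lem:8}. Finally, $\gamma\mapsto\int x_1x_2\,d\gamma$ is not weakly continuous (the integrand has quadratic growth); the existence argument must be carried out in the $W_2$ topology, using compactness of $\Gamma(\nu)$ under $W_2$ as in Lemma~\ref{lem:1}.
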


\begin{figure}
  \centering
  \begin{tikzpicture}[scale = 4/10]
    % \draw[help lines] (-10,-6) grid (10,6);
   
    \draw[dashed, ultra thin] (-3,-1) node [above left] {$x^1$} --(2,
    -6) node [right] {$y^1$};

    \draw[dashed, ultra thin] (3,1) node [below right] {$x^0$} --(-2,
    6) node [left] {$y^0$};

    \filldraw [black] (-3,-1) circle [radius=2pt] (2,-6) circle
    [radius=2pt] (3,1) circle [radius=2pt] (-2,6) circle [radius=2pt];

    \draw[thick,domain=-10:-0.09,smooth,variable=\x,black] plot
    ({\x},{-25/(\x-2) - 6}); \draw (-1,2.6667) node [left] {$H^1$};

    \draw[thick,domain=0.09:10,smooth,variable=\x,black] plot
    ({\x},{-25/(\x+2) + 6}); \draw (1,-2.6667) node [right] {$H^0$};

    \draw[very thick,domain=-5.45:5.45,smooth,variable=\x,black] plot
    ({\x},{(\x/3)^3}); \draw (5, 125/27) node [above left] {$G$};

  \end{tikzpicture}
  \caption{Hyperbolic non-crossing and tangent properties of the
    support of optimal plan.}
  \label{fig:1}
\end{figure}
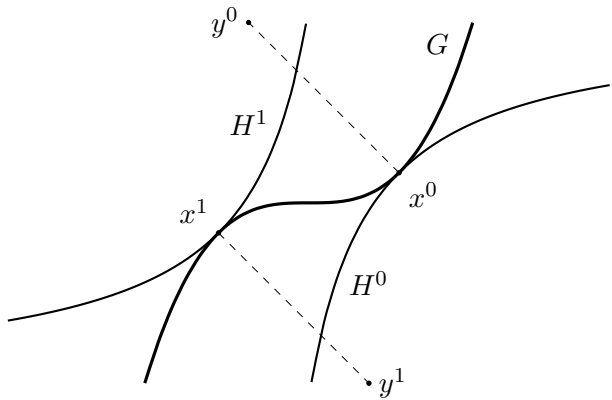

Figure~\ref{fig:1} illustrates the properties of the support of an
optimal plan $\gamma$ stated in Theorem~\ref{th:1}. Let $(x^0,y^0)$
and $(x^1,y^1)$ belong to $\supp{\gamma}$ and be such that
$x^0\not=x^1$ and the points $y^0$ and $y^1$ lie, respectively,
strictly above and strictly below the maximal monotone set $G$ from
item~\ref{item:3}.  As Lemma~\ref{lem:5} shows, item~\ref{item:2}
means that the hyperbolas
\begin{align*}
  H^0 = \descr{z\in \R^2}{c(z,y^0) = c(x^0,y^0),\; z_1 > y^0_1}, \\
  H^1 = \descr{z\in \R^2}{c(z,y^1) = c(x^1,y^1),\; z_1 < y^1_1},
\end{align*}
do \emph{not cross}. The geometric interpretation of item~\ref{item:3}
is that these hyperbolas are \emph{tangent} to $G$.

Before proceeding with the proof of Theorem~\ref{th:1}, we establish
rather surprising connections between an optimal martingale plan
for~\eqref{eq:5} and the solutions of classical unconstrained optimal
transport problems.  If $\mu$ and $\nu$ are Borel probability measures
on $\R^d$, then $\Pi(\mu, \nu)$ denotes the family of all couplings of
$\mu$ and $\nu$, that is, the family of Borel probability measures
$\pi$ on $\R^d\times \R^d = \descr{(x,y)}{x,y\in \R^d}$ with
$x$-marginal $\mu$ and $y$-marginal $\nu$.  For
$\mu, \nu \in \mathcal{P}_2(\R^d)$, the Wasserstein 2-metric is given
by
\begin{equation}
  \label{eq:9}
  W_2(\mu,\nu)  \set \inf_{\pi \in \Pi(\mu,\nu)} \sqrt{\int
    \abs{x-y}^2 d\pi}. 
\end{equation}

\begin{Corollary}
  \label{cor:1}
  Let $\nu \in \mathcal{P}_2(\R^2)$, $\mu$ be the $x$-marginal of an
  optimal plan $\gamma$ for~\eqref{eq:5}, and $\mu_i$ be the
  $x_i$-marginal of $\mu$, $i=1,2$. Then $\mu$ is a solution of the
  optimal transport problem:
  \begin{equation}
    \label{eq:10}
    \text{maximize} \quad \int x_1 x_2 d\pi \quad
    \text{over} \quad \pi\in \Pi(\mu_1,\mu_2), 
  \end{equation}
  or, equivalently,
  \begin{displaymath}
    W_2(\mu_1,\mu_2) = \sqrt{\int \abs{x_1-x_2}^2 d\mu}. 
  \end{displaymath}
\end{Corollary}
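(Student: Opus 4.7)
The plan is to leverage the structural information about $\supp \mu$ provided by Theorem~\ref{th:1}. That theorem yields a maximal monotone set $G \in \mathfrak{M}$ with $\supp \mu \subseteq G$. In particular, any two points $x, x' \in \supp \mu$ satisfy $(x_1 - x_1')(x_2 - x_2') \geq 0$, so $\supp \mu$, viewed as a subset of $\R \times \R$ with coordinates $(x_1, x_2)$, is monotone.

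By definition $\mu_1$ and $\mu_2$ are the $x_1$- and $x_2$-marginals of $\mu$, so $\mu$ itself belongs to $\Pi(\mu_1,\mu_2)$. The classical one-dimensional transport result (the Fr\'echet--Hoeffding characterization of the comonotone coupling) asserts that a coupling $\pi\in \Pi(\mu_1,\mu_2)$ maximizes $\int x_1 x_2\, d\pi$ if and only if $\supp \pi$ is a monotone subset of $\R\times \R$. Since $\supp \mu$ is monotone, $\mu$ attains this maximum, which proves the first assertion of the corollary.

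For the equivalent Wasserstein formulation, expand
\begin{displaymath}
  \int \abs{x_1 - x_2}^2 d\pi = \int x_1^2 d\mu_1 + \int x_2^2 d\mu_2 - 2\int x_1 x_2\, d\pi, \quad \pi \in \Pi(\mu_1,\mu_2).
\end{displaymath}
The first two terms do not depend on $\pi$, so minimizing the left-hand side over $\Pi(\mu_1,\mu_2)$ is equivalent to maximizing $\int x_1 x_2\, d\pi$, which by the previous paragraph is attained by $\mu$. It follows that $W_2(\mu_1,\mu_2) = \sqrt{\int \abs{x_1-x_2}^2 d\mu}$.

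The only substantive input beyond Theorem~\ref{th:1} is the one-dimensional comonotone coupling theorem, so no real obstacle arises here: the hard work was already done in establishing the monotone-support structure in Theorem~\ref{th:1}.
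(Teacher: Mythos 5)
Your proof is correct and follows essentially the same route as the paper's: invoke Theorem~\ref{th:1} to place $\supp\mu$ inside a maximal monotone set $G$, then apply the classical characterization of optimal one-dimensional couplings by monotone (equivalently, cyclically monotone, since these coincide for subsets of $\R\times\R$) support. Your explicit expansion of $\int\abs{x_1-x_2}^2\,d\pi$ simply spells out an equivalence the paper treats as well-known.
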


\begin{proof}
  It is well-known that problems~\eqref{eq:9} and~\eqref{eq:10} have
  same solutions and that an element of $\Pi(\mu_1,\mu_2)$ is such a
  solution if and only if its support belongs to a cyclically monotone
  set. By Theorem~\ref{th:1}, there is a monotone set $G$ that
  contains the support of $\mu$. Since every monotone set in $\R^2$ is
  also cyclically monotone, the result follows.
\end{proof}

\begin{Remark}
  \label{rem:2}
  Let $G$ be a maximal monotone set from Theorem~\ref{th:1} and $P_1$
  be its projection on $x_1$-coordinate.  If $\mu_1$ is atomless, then
  the increasing function
  \begin{displaymath}
    f(x_1) = \inf\descr{x_2\in \R}{(x_1,x_2)\in G}, \quad x_1\in P_1, 
  \end{displaymath}
  taking values in $\R\cup \braces{-\infty}$, defines an optimal map
  solution to~\eqref{eq:10}:
  \begin{displaymath}
    \mu(B) = \mu_1\descr{t\in \R}{(t,f(t))\in B}, \quad B\in
    \mathcal{B}(\R^2). 
  \end{displaymath}
  The function $f$ is a \emph{pricing rule} in a version of the Kyle
  equilibrium with insider studied in
  Section~\ref{sec:equil-with-insid}.
\end{Remark}

\begin{Corollary}
  \label{cor:2}
  Let $\nu \in \mathcal{P}_2(\R^2)$, $\gamma$ be an optimal plan
  for~\eqref{eq:5}, and $\mu$ be the $x$-marginal of $\gamma$. Then
  $\gamma$ is a solution of the optimal transport problem:
  \begin{equation}
    \label{eq:11}
    \text{minimize} \quad \int c(x,y)d\pi \quad
    \text{over} \quad \pi\in \Pi(\mu,\nu).   
  \end{equation}
\end{Corollary}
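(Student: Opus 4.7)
The plan is to use Theorem~\ref{th:1} to exhibit a maximal monotone set $G$ that serves as a ``dual witness'' for the classical transport problem~\eqref{eq:11}, i.e., a function $\phi_G(y)$ that bounds $c(x,y)$ from below $\pi$-a.s.\ for every $\pi \in \Pi(\mu,\nu)$, and coincides with $c(x,y)$ on $\supp \gamma$. By Theorem~\ref{th:1}, there exists $G \in \mathfrak{M}$ containing $\supp \mu$ such that $c(x,y) = \phi_G(y) = \min_{z \in G} c(z,y)$ for every $(x,y) \in \supp \gamma$.

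Given any $\pi \in \Pi(\mu,\nu)$, almost every point $(x,y) \in \supp \pi$ satisfies $x \in \supp \mu \subseteq G$, and therefore by the very definition of $\phi_G$,
\begin{equation*}
  c(x,y) \geq \inf_{z \in G} c(z,y) = \phi_G(y), \qquad \pi\text{-a.s.}
\end{equation*}
Both $\mu$ and $\nu$ lie in $\mathcal{P}_2(\R^2)$, so $c$ is integrable against every coupling in $\Pi(\mu,\nu)$, and integrating the inequality above yields $\int c(x,y)\,d\pi \geq \int \phi_G(y)\,d\nu$. (The right-hand side is a well-defined element of $[-\infty, 0]$ since $\phi_G \leq 0$ by Lemma~\ref{lem:16}.)

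On the other hand, since $c(x,y) = \phi_G(y)$ on $\supp \gamma$ and $\gamma$ has $\nu$ as its $y$-marginal,
\begin{equation*}
  \int c(x,y)\,d\gamma = \int \phi_G(y)\,d\gamma = \int \phi_G(y)\,d\nu,
\end{equation*}
where the left-hand side is a finite real number because $\gamma \in \mathcal{P}_2(\R^2 \times \R^2)$. Combining these, $\int c\,d\gamma \leq \int c\,d\pi$ for every $\pi \in \Pi(\mu,\nu)$. Since $\gamma$ itself belongs to $\Pi(\mu,\nu)$, it solves~\eqref{eq:11}.

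The main conceptual point, and the only step that requires Theorem~\ref{th:1} in an essential way, is the existence of the maximal monotone ``envelope'' $G$ and the identity $c(x,y) = \phi_G(y)$ on $\supp \gamma$: once this is in hand, no obstacle remains, as the inequality $c(x,y) \geq \phi_G(y)$ for $x \in G$ is immediate from the definition of $\phi_G$, and integrability issues are controlled by the finite second moments of $\mu$ and $\nu$.
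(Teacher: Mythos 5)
Your proof is correct and essentially coincides with the paper's argument: both use Theorem~\ref{th:1} to produce a maximal monotone set $G\supset\supp\mu$ on which $c(x,y)=\phi_G(y)$ along $\supp\gamma$, and then use $\phi_G$ as a dual certificate against arbitrary $\pi\in\Pi(\mu,\nu)$. The only cosmetic difference is that the paper routes the lower bound through the $c$-conjugate $\phi^c$ and the identity $\int\phi^c\,d\mu=0$, whereas you observe directly that $c(x,y)\geq\phi_G(y)$ whenever $x\in G$ -- the two are equivalent, since $\phi^c(x)=0$ for $x\in G$ is exactly this pointwise inequality in disguise.
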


\begin{proof}
  By Theorem~\ref{th:1}, there is $G\in \mathfrak{M}$ such that
  \begin{displaymath}
    \int \phi d\nu = \int c(x,y) d\gamma, 
  \end{displaymath}
  where $\phi = \phi_G$.  Lemma~\ref{lem:16} shows that the
  $c$-conjugate function
  \begin{displaymath}
    \phi^c(x) \set \inf_{y\in \R^2} (c(x,y) - \phi(y)), \quad x\in
    \R^2,     
  \end{displaymath}
  takes values in $[-\infty,0]$ and
  $G = \descr{x\in \R^2}{\phi^c(x) = 0}$.  By Theorem~\ref{th:1},
  $\supp{\mu} \subset G$ and thus,
  \begin{displaymath}
    \int \phi^c d\mu = 0. 
  \end{displaymath}
  Since $\phi^c(x) + \phi(y) \leq c(x,y)$, we deduce that
  \begin{align*}
    \int c(x,y) d\gamma = \int \phi d\nu + \int \phi^c d\mu \leq \int
    c(x,y) d\pi, \quad \pi \in \Pi(\mu,\nu),
  \end{align*}
  and the optimality of $\gamma$ for~\eqref{eq:11} follows.
\end{proof}

\begin{Remark}
  \label{rem:3}
  We point out that the assertions of the corollaries are not
  sufficient for the optimality of $\gamma\in \Gamma(\nu)$. Indeed,
  let $\gamma$ be the simplest martingale measure, whose $x$-marginal
  $\mu$ is the Dirac measure concentrated at the mean
  $\int yd\nu \in \R^2$. In this case, the families $\Pi(\mu_1,\mu_2)$
  and $\Pi(\mu,\nu)$ are singletons and hence, $\mu$ and $\gamma$ are
  trivial solutions to~\eqref{eq:10} and~\eqref{eq:11}. An elementary
  analysis of~\eqref{eq:8} shows that such $\gamma$ is optimal
  for~\eqref{eq:5} if and only if the support of $\nu$ belongs to a
  line with negative or infinite slope.
\end{Remark}

The rest of the section is devoted to the proof of Theorem~\ref{th:1},
which we divide into lemmas. We start with the existence part and
recall some basic facts on the Wasserstein distance $W_2$; see
~\cite[Theorem~2.7 and Proposition~2.4]{AmbrGigli:13}. If $(\mu_n)$
and $\mu$ are in $\mathcal{P}_2(\R^d)$, then $W_2(\mu_n,\mu)\to 0$ if
and only if $\int f(x) d\mu_n \to \int f(x) d\mu$ for every continuous
function $f=f(x)$ on $\R^d$ with quadratic growth:
\begin{displaymath}
  \abs{f(x)} \leq K (1 + \abs{x}^2), \quad x\in \R^d, 
\end{displaymath}
where $K=K(f)>0$ is a constant.  A set $A\subset \mathcal{P}_2(\R^d)$
is \emph{pre-compact} under $W_2$ if and only if
\begin{displaymath}
  \sup_{\mu \in A} \int_{\abs{x}\geq K} \abs{x}^2 d\mu \to 0,
  \quad K\to \infty. 
\end{displaymath}

\begin{Lemma}
  \label{lem:1}
  The family $\Gamma(\nu)$ is a convex compact set in
  $\mathcal{P}_2(\R^2\times \R^2)$ under the Wasserstein metric $W_2$.
\end{Lemma}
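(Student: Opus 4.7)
The plan is to verify the three properties: non-emptiness (to ensure compactness is non-vacuous), convexity, relative compactness, and closure under $W_2$.

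\textbf{Convexity and non-emptiness.} Both defining conditions of $\Gamma(\nu)$ are linear in $\gamma$: the $y$-marginal constraint is obviously linear, and the martingale property $\gamma(y \mid x) = x$ is equivalent to the linear family of identities $\int (y_i - x_i) f(x) \, d\gamma = 0$ for $i=1,2$ and every bounded continuous $f$ on $\R^2$. Convexity follows. For non-emptiness, take $m = \int y \, d\nu$ and set $\gamma = \delta_m \otimes \nu$.

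\textbf{Relative compactness under $W_2$.} Using the criterion from \cite[Prop.~2.4]{AmbrGigli:13}, I need $\sup_{\gamma \in \Gamma(\nu)} \int_{|z| \geq K} |z|^2 \, d\gamma \to 0$ as $K \to \infty$, where $z=(x,y)\in \R^2 \times \R^2$. Since $|z|^2 = |x|^2 + |y|^2$ and $\{|z| \geq K\} \subset \{|x| \geq K/\sqrt{2}\} \cup \{|y| \geq K/\sqrt{2}\}$, it suffices to uniformly control tails in $x$ and in $y$ separately. The $y$-tail is automatic because the $y$-marginal is the fixed measure $\nu \in \mathcal{P}_2(\R^2)$. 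The $x$-tail is the heart of the argument: the martingale constraint gives $X_i = \cEP{Y_i}{X}$, so by Jensen's inequality $|X|^2 \leq \cEP{|Y|^2}{X}$ $\gamma$-a.s.; hence
\begin{equation*}
  \int |x|^2 \, d\gamma \leq \int |y|^2 \, d\nu =: C, \qquad \PP{|X| \geq K} \leq C/K^2,
\end{equation*}
uniformly in $\gamma \in \Gamma(\nu)$. Using Jensen again and then splitting by a level $L$,
\begin{equation*}
  \int_{|x| \geq K} |x|^2 \, d\gamma \leq \int 1_{\{|x| \geq K\}} |y|^2 \, d\gamma \leq L^2 \cdot \frac{C}{K^2} + \int_{|y| \geq L} |y|^2 \, d\nu.
\end{equation*}
Choosing $L$ large first (to control the second term by $\varepsilon/2$ using integrability of $|y|^2$ under $\nu$) and then $K$ large, this is $< \varepsilon$ uniformly in $\gamma$.

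\textbf{Closure under $W_2$.} Let $\gamma^n \in \Gamma(\nu)$ with $W_2(\gamma^n, \gamma) \to 0$. By \cite[Thm.~2.7]{AmbrGigli:13}, $\int g \, d\gamma^n \to \int g \, d\gamma$ for every continuous $g$ of quadratic growth on $\R^2 \times \R^2$. Applied to $g(x,y) = f(y)$ with $f$ continuous of quadratic growth, this forces the $y$-marginal of $\gamma$ to equal $\nu$. For the martingale constraint, apply the convergence to $g(x,y) = (y_i - x_i) f(x)$ with $f$ bounded continuous on $\R^2$; such $g$ has linear, hence sub-quadratic, growth, so $\int (y_i - x_i) f(x) \, d\gamma = \lim_n \int (y_i - x_i) f(x) \, d\gamma^n = 0$. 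Since bounded continuous $f$ are determining, $\gamma(y \mid x) = x$, i.e., $\gamma \in \Gamma(\nu)$.

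The main technical obstacle is the uniform $x$-tail control: the $x$-marginals vary with $\gamma$, so one cannot invoke a single reference measure directly. The key trick is the two-step bound above, which converts a tail-in-$x$ statement into a tail-in-$y$ statement via Jensen's inequality and then exploits the fixed integrable majorant $|y|^2$ under $\nu$.
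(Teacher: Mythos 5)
Your proof is correct and follows essentially the same line as the paper's: the crux in both is using the martingale property together with conditional Jensen's inequality to dominate the $x$-tail by a $y$-tail, which is then controlled uniformly because the $y$-marginal is fixed at $\nu\in\mathcal{P}_2(\R^2)$. The only cosmetic difference is that you split the tail estimate with two independent levels $K$ and $L$, while the paper uses the single parameter $K$ with the cutoff $|y|=\sqrt{K}$; both are equivalent truncation arguments.
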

\begin{proof}
  The martingale property $\gamma(y|x) = x$ of $\gamma\in \Gamma(\nu)$
  is equivalent to the identity
  \begin{displaymath}
    \int f(x) (y-x) d\gamma = 0
  \end{displaymath}
  for every bounded and continuous function $f=f(x)$ on $\R^2$.  The
  convexity and the closedness of $\Gamma(\nu)$ under $W_2$ readily
  follow.  It only remains to be shown that $\Gamma(\nu)$ is
  pre-compact under $W_2$ or, equivalently, that
  \begin{displaymath}
    \sup_{\gamma \in \Gamma(\nu)} \int (\abs{x}^2 + \abs{y}^2)
    \ind{\abs{x} +\abs{y} \geq K} 
    d\gamma \to 0, \quad K\to \infty. 
  \end{displaymath}
  For $\gamma \in \Gamma(\nu)$ we have that
  \begin{align*}
    \frac12 \int (\abs{x}^2 + \abs{y}^2) \ind{\abs{x} + \abs{y} \geq
    2K} d\gamma &\leq \int \left(\abs{x}^2 \ind{\abs{x}\geq K} +
                  \abs{y}^2
                  \ind{\abs{y} \geq K}\right) d\gamma \\
                &= \int \abs{\gamma(y|x)}^2 \ind{\abs{x}\geq K} d\gamma + \int
                  \abs{y}^2
                  \ind{\abs{y} \geq K} d\nu \\
                &\leq \int \abs{y}^2 \ind{\abs{x}\geq K} d\gamma + \int \abs{y}^2
                  \ind{\abs{y} \geq K} d\nu
  \end{align*}
  and then that
  \begin{align*}
    \int \abs{y}^2 \ind{\abs{x}\geq K} d\gamma &\leq \int \abs{y}^2
                                                 \ind{\abs{x} \geq K > \abs{y}^2} d\gamma + \int \abs{y}^2
                                                 \ind{\abs{y}^2 \geq K} d\nu  \\
                                               &\leq  K \gamma(\abs{x}\geq K) + \int \abs{y}^2 \ind{\abs{y} \geq
                                                 \sqrt{K}} d\nu.
  \end{align*}
  Finally, we obtain that
  \begin{align*}
    K \gamma(\abs{x}\geq K) \leq \frac1K\int \abs{x}^2 d\gamma =
    \frac1K\int \abs{\gamma(y|x)}^2 d\gamma \leq \frac1K \int
    \abs{y}^2 d\nu,
  \end{align*}
  and the result follows.
\end{proof}

\begin{Lemma}
  \label{lem:2}
  An optimal plan $\gamma$ for~\eqref{eq:5} exists.
\end{Lemma}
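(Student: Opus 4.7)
The plan is to combine the compactness result just established in Lemma~\ref{lem:1} with the reformulation from Remark~\ref{rem:1} in order to reduce the existence problem to maximizing a continuous functional on a $W_2$-compact set.

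More precisely, by Remark~\ref{rem:1}, minimizing $\int c(x,y)\,d\gamma$ over $\Gamma(\nu)$ is equivalent to maximizing the linear functional
\begin{displaymath}
  \Lambda(\gamma) \set \int x_1 x_2\, d\gamma, \quad \gamma\in \Gamma(\nu),
\end{displaymath}
where the offset $\int y_1 y_2\, d\nu$ depends only on $\nu$ and is finite because $\nu\in \mathcal{P}_2(\R^2)$. The function $(x,y)\mapsto x_1 x_2$ is continuous on $\R^2\times\R^2$ and has quadratic growth, since $\abs{x_1 x_2} \leq \tfrac12(\abs{x}^2+\abs{y}^2)$. By the characterization of $W_2$-convergence recalled just before Lemma~\ref{lem:1}, the map $\gamma \mapsto \Lambda(\gamma)$ is therefore continuous on $\mathcal{P}_2(\R^2\times\R^2)$ with respect to $W_2$.

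Next, I would check that $\Lambda$ is bounded on $\Gamma(\nu)$, so that taking a maximizing sequence is meaningful. For $\gamma\in \Gamma(\nu)$, the martingale identity $\gamma(y|x)=x$ combined with conditional Jensen gives
\begin{displaymath}
  \int \abs{x}^2\, d\gamma = \int \abs{\gamma(y|x)}^2\, d\gamma \leq \int \abs{y}^2\, d\gamma = \int \abs{y}^2\, d\nu,
\end{displaymath}
which together with $\abs{x_1 x_2}\leq \tfrac12\abs{x}^2$ yields $\abs{\Lambda(\gamma)}\leq \tfrac12\int \abs{y}^2\,d\nu < \infty$.

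To conclude, I would pick a sequence $(\gamma_n)\subset \Gamma(\nu)$ with $\Lambda(\gamma_n)$ converging to the supremum. By Lemma~\ref{lem:1}, $\Gamma(\nu)$ is $W_2$-compact, so some subsequence converges under $W_2$ to a limit $\gamma^*\in \Gamma(\nu)$; by the continuity of $\Lambda$, $\Lambda(\gamma^*)$ attains the supremum, and hence $\gamma^*$ is an optimal plan for~\eqref{eq:5}. There is no real obstacle here beyond checking growth and the martingale-Jensen bound; the reformulation in Remark~\ref{rem:1} is what avoids having to deal directly with the sign-indefinite cost $c$, which would otherwise require a lower-semicontinuity argument rather than outright continuity.
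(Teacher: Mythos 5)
Your proof is correct and essentially the same as the paper's: extract a $W_2$-convergent subsequence from an extremizing sequence via the compactness of $\Gamma(\nu)$ from Lemma~\ref{lem:1}, then pass to the limit using the characterization of $W_2$-convergence quoted just before that lemma. The detour through Remark~\ref{rem:1} is harmless but unnecessary, and your closing rationale for it is off-base: $c(x,y)=(x_1-y_1)(x_2-y_2)$ is itself continuous with quadratic growth in $(x,y)$, so $\gamma\mapsto\int c\,d\gamma$ is already $W_2$-continuous (not merely lower semicontinuous); the sign-indefiniteness of $c$ plays no role, since the $W_2$-characterization gives genuine convergence of integrals for any continuous integrand with quadratic growth regardless of sign. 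The paper accordingly works directly with a minimizing sequence for $\int c\,d\gamma$, with no separate boundedness check and no reformulation — the boundedness of the infimum falls out automatically once the limit plan is produced.
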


\begin{proof}
  Let $(\gamma_n)$ be a sequence in $\Gamma(\nu)$ such that
  \begin{displaymath}
    \lim_{n\to \infty} \int c(x,y) d\gamma_n = \inf_{\zeta \in
      \Gamma(\nu)} \int c(x,y) d\zeta. 
  \end{displaymath}
  By Lemma~\ref{lem:1}, $\Gamma(\nu)$ is compact under $W_2$. Hence,
  there is a subsequence $(\gamma_{n_k}) \subset (\gamma_n)$ that
  converges to $\gamma\in \Gamma(\nu)$ under $W_2$. Since the cost
  function $c=c(x,y)$ is continuous and has quadratic growth, we
  deduce that
  \begin{displaymath}
    \int c(x,y) d\gamma = \lim_{k\to \infty} \int c(x,y) d\gamma_{n_k}
    = \inf_{\zeta \in \Gamma} \int c(x,y) d\zeta. 
  \end{displaymath}
  Thus, $\gamma$ is an optimal plan.
\end{proof}

The implication~\ref{item:1}$\implies$\ref{item:2} of
Theorem~\ref{th:1} is proved in Lemma~\ref{lem:4} and relies on the
following first-order optimality condition.

\begin{Lemma}
  \label{lem:3}
  Let $\gamma$ be an optimal plan for~\eqref{eq:5}. Then
  \begin{equation}
    \label{eq:12}
    \int c(x,y) d\eta 
    \leq \int y_1 y_2 d\eta - \int y_1 d\eta \int y_2 d\eta, 
  \end{equation} 
  for every $\eta\in \mathcal{P}_2(\R^2\times \R^2)$ such that
  $\supp \eta \subset \supp \gamma$.
\end{Lemma}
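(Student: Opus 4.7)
My plan is to invoke the first-order optimality of $\gamma$: for any signed measure $\delta$ on $\R^2\times\R^2$ with zero $y$-marginal and vanishing martingale contribution (that is, $\int(y-x)f(x)\,d\delta = 0$ for every bounded Borel $f$), and such that $\gamma+\epsilon\delta \geq 0$ for small $\epsilon > 0$, one has $\int c\,d\delta \geq 0$. To set things up, I first rewrite the right-hand side of~\eqref{eq:12}. Set $\bar y := \int y\,d\eta \in \R^2$ and introduce the auxiliary probability measure $\bar\eta(dx,dy) := \eta_y(dy)\otimes \delta_{\bar y}(dx)$. Expanding the product cost yields
\begin{displaymath}
  \int c(x,y)\,d\bar\eta = \int(\bar y_1 - y_1)(\bar y_2 - y_2)\,\eta_y(dy) = \int y_1 y_2\,d\eta - \bar y_1 \bar y_2,
\end{displaymath}
which is exactly the right-hand side of~\eqref{eq:12}. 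Note that $\bar\eta$ is itself a martingale (all $x$-mass sits at $\bar y$ and $\int y\,\eta_y(dy) = \bar y$) with $y$-marginal $\eta_y$; in particular $\bar\eta - \eta$ has zero $y$-marginal. Thus~\eqref{eq:12} is equivalent to $\int c\,d\eta \leq \int c\,d\bar\eta$, and the proof reduces to showing that the perturbation direction $\bar\eta - \eta$ cannot strictly decrease the cost at $\gamma$.

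The natural candidate $\gamma + \epsilon(\bar\eta - \eta)$ preserves the $y$-marginal but generally destroys the martingale property, with defect $-\epsilon\int f(x)(m(x)-x)\,\eta_x(dx)$, where $m(x) := \int y\,\eta(dy|x)$ and $\eta_x$ is the $x$-marginal of $\eta$. I would therefore augment it with a correction $\xi_\epsilon$: a signed measure with zero $y$-marginal whose martingale contribution cancels this defect, so that $\gamma_\epsilon := \gamma + \epsilon(\bar\eta - \eta + \xi_\epsilon)\in \Gamma(\nu)$. Assuming first that $\eta$ is absolutely continuous with respect to $\gamma$ with bounded density, I take $\xi_\epsilon = \chi\cdot\gamma$ with a bounded function $\chi$ on $\supp\gamma$ solving the linear system $\int \chi(x,y)\,\gamma(dx|y) = 0$ for $\nu$-a.e.\ $y$ and $\int (y-x)\chi(x,y)\,\gamma(dy|x) = (m(x)-x)\bar\rho(x)$ for $\mu$-a.e.\ $x$, where $\bar\rho = d\eta_x/d\mu$. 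Using the ansatz $\chi(x,y) = h(x)\cdot(y-x) + g(y)$ and invertibility of the conditional $y$-covariance operator of $\gamma$, the system is solvable, and boundedness of $\chi$ together with the fact that $\bar\eta$ is singular with respect to $\gamma$ (concentrated at $x=\bar y$) guarantees $\gamma_\epsilon\geq 0$ for small $\epsilon$. Optimality then gives $\int c\,d\bar\eta - \int c\,d\eta + \int c\,d\xi_\epsilon \geq 0$, and exploiting the residual freedom in $\chi$ (the null space of the two constraints) to arrange $\int c\,d\xi_\epsilon \leq 0$ completes~\eqref{eq:12}.

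For a general $\eta \in \cP_2(\R^2\times\R^2)$ with $\supp\eta\subset \supp\gamma$ but $\eta\not\ll\gamma$, I would approximate $\eta$ in the Wasserstein metric $W_2$ by absolutely continuous measures $\eta_n\ll\gamma$ still supported in $\supp\gamma$: every open neighborhood of a point of $\supp\eta$ has positive $\gamma$-mass (it meets $\supp\gamma$), so $\eta$ can be smoothed locally against $\gamma$. All three functionals $\eta\mapsto \int c\,d\eta$, $\int y_i\,d\eta$, $\int y_1 y_2\,d\eta$ are continuous with respect to $W_2$ convergence because $c$ and the monomials $y_i$, $y_1y_2$ have quadratic growth, so~\eqref{eq:12} transfers from $\eta_n$ to $\eta$. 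The main obstacle is controlling the correction $\xi_\epsilon$ simultaneously in three respects — existence, non-negativity of $\gamma_\epsilon$, and the sign of $\int c\,d\xi_\epsilon$ — which must leverage the specific covariance-type form of $c(x,y)=(x_1-y_1)(x_2-y_2)$ and the two-dimensional geometry of $\R^2\times\R^2$, rather than a purely abstract linear argument on signed measures.
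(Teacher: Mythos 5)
Your opening move is a good one: rewriting the right-hand side of~\eqref{eq:12} as $\int c(x,y)\,d\bar\eta$ for the Dirac-$x$ measure $\bar\eta$ is exactly the right reformulation (the paper does essentially the same, putting the Dirac mass at an arbitrary non-atom $q$ of $\mu$ rather than at $\bar y$; this mild extra freedom matters, since if $\bar y$ happens to be an atom of $\mu$ then $\bar\eta$ is not singular with respect to $\gamma$, contrary to what you later assert). After that, however, you diverge onto a route that has genuine gaps. The paper never tries to keep the perturbed measure inside $\Gamma(\nu)$ by adding an explicit correction. Instead it takes $\widetilde\gamma = \gamma + \varepsilon(\zeta - \eta)$, which has the right $y$-marginal but is not a martingale, and \emph{pushes forward} by $(x,y)\mapsto(\widetilde X(x),y)$ with $\widetilde X(x)=\widetilde\gamma(y|x)$. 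The image measure automatically lies in $\Gamma(\nu)$, and the comparison $\int \widetilde X_1 \widetilde X_2\,d\widetilde\gamma\le\int x_1 x_2\,d\gamma$ expands straightforwardly in $\varepsilon$ via Bayes' rule to give~\eqref{eq:12}.

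Your correction measure $\xi_\epsilon = \chi\cdot\gamma$, by contrast, faces three obstacles that are left open and that I do not think can be closed in this generality. First, solvability of the linear system rests on invertibility of the conditional $y$-covariance of $\gamma$ given $x$, but this operator can degenerate (e.g.\ where $\gamma(\cdot\,|x)$ is a Dirac). Second, even granting solvability, boundedness of $\chi$ is needed for $\gamma_\epsilon\ge 0$, and nothing in the construction controls it. Third, the claim that one can use the null space of the two linear constraints to arrange $\int c\,d\xi_\epsilon\le 0$ is an assertion without support; in general this null space could be trivial, or the functional $\int c\,d\xi_\epsilon$ could be pinned to a positive value on the feasible set. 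The conditional-expectation projection in the paper sidesteps all three issues at once, since it builds the repaired martingale measure implicitly rather than equation by equation. Your approximation step for general $\eta$ (Wasserstein limit of bounded-density measures) is the same as the paper's, which uses Theorem~\ref{th:8} for that purpose.
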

\begin{proof}
  We first establish~\eqref{eq:12} for a Borel probability measure
  $\eta$ on $\R^2\times \R^2$ that has a bounded density with respect
  to $\gamma$:
  \begin{displaymath}
    V(x,y) = \frac{d\eta}{d\gamma} \in
    \mathcal{L}^\infty(\R^2\times \R^2).  
  \end{displaymath}
  We choose a non-atom $q \in \R^2$ of $\mu(dx) = \gamma(dx,\R^2)$ and
  define the probability measure
  $$
  \zeta(dx, dy)= \delta_{q}(dx) \eta(\R^2,dy),
  $$
  where $\delta_q$ is the Dirac measure concentrated at $q$.  For
  sufficiently small $\varepsilon>0$ the probability measure
  $$
  \widetilde \gamma = \gamma + \varepsilon (\zeta - \eta)
  $$
  is well-defined and has the same $y$-marginal $\nu$ as $\gamma$.  We
  define the conditional expectation
  $\widetilde X(x) = \widetilde\gamma(y|x)$ and observe that the law
  of $(\widetilde X, y)$ under $\widetilde \gamma$ belongs to
  $\Gamma(\nu)$. The optimality of $\gamma$ for~\eqref{eq:5} or,
  equivalently, for~\eqref{eq:6} implies that
  \begin{equation}
    \label{eq:13} 
    \int \widetilde X_1 \widetilde X_2 d\widetilde \gamma 
    \leq \int x_1x_2 d\gamma.  
  \end{equation}

  Standard computations based on Bayes formula show that
  \begin{align*}
    \widetilde X(x) = \ind{x\not=q}\frac{x -
    \varepsilon R(x)}{1 - \varepsilon U(x)}  + \ind{x=q} m,
  \end{align*}
  where $U(x) = \gamma(V|x)$, $R(x) = \gamma(Vy|x)$, and
  $m = \int y d\eta$.  Since $\abs{V}\leq K$ for some constant $K>0$,
  we deduce that $\abs{U}\leq K$ and
  $\abs{R} \leq K \gamma(\abs{y}|x)$. It follows that
  \begin{align*}
    \int \widetilde X_1 \widetilde X_2 d\widetilde \gamma & = \int
                                                            \frac{(x_1 - \varepsilon R_1)(x_2 - \varepsilon
                                                            R_2)}{(1-\varepsilon U)^2} (1-\varepsilon V) d\gamma
                                                            + \varepsilon m_1m_2 \\
                                                          &= \int \frac{(x_1 - \varepsilon R_1)(x_2 - \varepsilon
                                                            R_2)}{1-\varepsilon U}
                                                            d\gamma + \varepsilon m_1m_2 \\
                                                          &= \int x_1 x_2 d\gamma + \varepsilon \left(m_1m_2 + \int (x_1x_2
                                                            U - x_1 R_2 - x_2 R_1) d\gamma\right) + O(\varepsilon^2).
  \end{align*}
  In view of~\eqref{eq:13}, the first-order term is negative.  It can
  be written as
  \begin{align*}
    0 & \geq  m_1m_2 + \int (x_1x_2 U - x_1 R_2 - x_2 R_1) d\gamma \\
      & = \int y_1 d\eta\int y_2 d\eta + \int (x_1x_2 - x_1y_2 -x_2 y_1) V d\gamma  \\
      & = \int y_1 d\eta\int y_2 d\eta + \int (x_1x_2 - x_1y_2 -x_2 y_1) d\eta  \\
      & = \int y_1 d\eta\int y_2 d\eta + \int (c(x,y) - y_1y_2) d\eta
  \end{align*}
  and the result follows.

  In the general case, where $\eta\in \mathcal{P}_2(\R^2\times \R^2)$
  and $\supp{\eta} \subset \supp{\gamma}$, we use the approximation
  result from Appendix~\ref{sec:density-cw_2rd}. By
  Theorem~\ref{th:8}, there are Borel probability measures $(\eta_n)$
  on $\R^2\times \R^2$ that have bounded densities with respect to
  $\gamma$ and converge to $\eta$ under $W_2$. By what we have already
  proved,
  \begin{displaymath}
    \int c(x,y) d\eta_n  
    \leq \int y_1 y_2 d\eta_n - \int y_1 d\eta_n \int y_2 d\eta_n,
    \quad  n\geq 1. 
  \end{displaymath}
  Since the integrands are continuous functions with quadratic growth,
  we can pass to the limit as $n\to \infty$ and obtain~\eqref{eq:12}.
\end{proof}

\begin{Lemma}
  \label{lem:4}
  Let $\gamma$ be an optimal plan for~\eqref{eq:5}. Then
  condition~\ref{item:2} of Theorem~\ref{th:1} holds.
\end{Lemma}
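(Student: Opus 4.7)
The plan is to apply Lemma~\ref{lem:3} to a carefully chosen two-point probability measure. Given $(x^0,y^0), (x^1,y^1)\in \supp\gamma$ and $t\in [0,1]$, consider
\begin{displaymath}
  \eta = (1-t)\,\delta_{(x^0,y^0)} + t\,\delta_{(x^1,y^1)} \in \mathcal{P}_2(\R^2\times \R^2).
\end{displaymath}
Its support is $\{(x^0,y^0),(x^1,y^1)\}\subset \supp\gamma$, so the hypothesis of Lemma~\ref{lem:3} is satisfied, and~\eqref{eq:12} applies.

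Next I would compute both sides of~\eqref{eq:12} explicitly. The left-hand side is immediate:
\begin{displaymath}
  \int c(x,y)\,d\eta = (1-t)\,c(x^0,y^0) + t\,c(x^1,y^1).
\end{displaymath}
For the right-hand side, note that it is exactly the covariance of $y_1$ and $y_2$ under $\eta$, which for a two-point measure can be computed by direct bilinear expansion. A short calculation yields
\begin{displaymath}
  \int y_1 y_2\,d\eta - \int y_1\,d\eta \int y_2\,d\eta = t(1-t)\,(y_1^0 - y_1^1)(y_2^0 - y_2^1) = t(1-t)\,c(y^0,y^1).
\end{displaymath}
Plugging these into~\eqref{eq:12} gives precisely~\eqref{eq:7}.

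There is no real obstacle here: Lemma~\ref{lem:3} does the heavy lifting, and the rest is recognizing the right-hand side of the first-order condition as a covariance and exploiting the fact that for a two-point probability measure the covariance collapses to the single product $t(1-t)(y_1^0-y_1^1)(y_2^0-y_2^1)$. The endpoint cases $t\in\{0,1\}$ correspond (via $\eta=\delta_{(x^i,y^i)}$) to the useful special consequence $c(x,y)\leq 0$ on $\supp\gamma$, which is already contained in the general statement and needs no separate argument.
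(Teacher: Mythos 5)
Your proposal is correct and follows exactly the same approach as the paper: apply Lemma~\ref{lem:3} to the two-point measure $\eta=(1-t)\delta_{(x^0,y^0)}+t\delta_{(x^1,y^1)}$ and carry out the bilinear/covariance expansion to turn~\eqref{eq:12} into~\eqref{eq:7}. The paper leaves this computation as ``elementary''; you have simply written it out, and the algebra checks.
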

 
\begin{proof}
  Lemma~\ref{lem:3} yields inequality~\eqref{eq:12} for the
  probability measure
  $$
  \eta(dx,dy) = (1-t)\delta_{(x^0,y^0)}(dx,dy) + t
  \delta_{(x^1,y^1)}(dx,dy),
  $$
  where $t\in [0,1]$, $(x^i,y^i) \in \supp{\gamma}$, and
  $\delta_{(x^i,y^i)}$ is the Dirac measure concentrated at
  $(x^i,y^i)$, $i=0,1$. Elementary computations show that for such
  $\eta$ \eqref{eq:12} becomes~\eqref{eq:7}.
\end{proof}

The equivalence of assertions~\ref{item:2} and~\ref{item:3} of
Theorem~\ref{th:1} is a special case of Lemma~\ref{lem:6}, whose proof
relies on the following geometric interpretation
of~\eqref{eq:7}. Figure~\ref{fig:1} visualizes the arguments.

\begin{Lemma}
  \label{lem:5}
  Let $x^i$ and $y^i$, $i=0,1$, be points in $\R^2$ such that
  $y^0_1 < y^1_1$. Then~\eqref{eq:7} holds if and only if for all
  $a_i < c(x^i,y^i)$, $i=0,1$, the graphs of the hyperbolas
  \begin{align*}
    h^0(s) &= y^0_2 + \frac{a_0}{s-y^0_1}, \quad s>y^0_1, \\
    h^1(s) &= y^1_2 + \frac{a_1}{s-y^1_1}, \quad s<y^1_1,
  \end{align*}
  do not intersect:
  $$
  h^0(s) < h^1(s), \quad s\in (y^0_1, y^1_1).
  $$
\end{Lemma}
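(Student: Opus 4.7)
The plan is to reduce the hyperbolic separation to an affine-in-$(a_0,a_1)$ inequality and compare directly with~\eqref{eq:7}. First I would parametrize the open segment by $s = (1-t)y^0_1 + t y^1_1$ with $t\in(0,1)$, so that $s - y^0_1 = t(y^1_1-y^0_1)$ and $s - y^1_1 = -(1-t)(y^1_1-y^0_1)$. Substituting into the definitions of $h^0$ and $h^1$ and multiplying through by the strictly positive factor $t(1-t)(y^1_1-y^0_1)$, the inequality $h^0(s) < h^1(s)$ becomes
\[
(1-t)\,a_0 + t\,a_1 \;<\; t(1-t)(y^1_1-y^0_1)(y^1_2-y^0_2) \;=\; t(1-t)\,c(y^0,y^1).
\]

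Next, I would quantify over admissible $a_i$. For fixed $t\in(0,1)$, the displayed strict inequality holds for \emph{every} $a_0 < c(x^0,y^0)$ and $a_1 < c(x^1,y^1)$ if and only if
\[
(1-t)\,c(x^0,y^0) + t\,c(x^1,y^1) \;\leq\; t(1-t)\,c(y^0,y^1),
\]
which is exactly~\eqref{eq:7} at this $t$. The ``only if'' direction follows by letting $a_i \uparrow c(x^i,y^i)$ and passing to the limit. For the ``if'' direction, since both weights $(1-t)$ and $t$ are strictly positive, replacing each $c(x^i,y^i)$ by the strictly smaller $a_i$ converts the $\leq$ into a strict $<$.

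Since $s$ ranges over $(y^0_1, y^1_1)$ exactly as $t$ ranges over $(0,1)$, the argument above yields the equivalence with~\eqref{eq:7} restricted to $t \in (0,1)$. The endpoint cases $t=0$ and $t=1$ in~\eqref{eq:7} reduce to $c(x^0,y^0)\leq 0$ and $c(x^1,y^1)\leq 0$, both of which follow from~\eqref{eq:7} on $(0,1)$ by letting $t\to 0^+$ and $t\to 1^-$, as the right-hand side $t(1-t)\,c(y^0,y^1)$ vanishes in the limit. Conversely, these endpoint instances are implied trivially by~\eqref{eq:7} on the full interval $[0,1]$. This completes the equivalence.

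The only delicate point is keeping track of the strict-versus-non-strict inequalities when passing between the $a_i$-family of hyperbolic separations and the single inequality~\eqref{eq:7}; the supremum step in the quantification exchanges them, so no deeper obstacle arises.
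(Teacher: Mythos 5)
Your proof is correct and follows essentially the same route as the paper: both substitute the linear parametrization $s=(1-t)y^0_1+ty^1_1$, clear denominators by the positive factor $t(1-t)(y^1_1-y^0_1)$, and reduce the non-crossing condition to the affine inequality $(1-t)a_0+ta_1<t(1-t)\,c(y^0,y^1)$, from which the equivalence with \eqref{eq:7} follows by taking the supremum over $a_i$. The paper simply packages the computation as a single identity in $(h^1(s)-h^0(s))(s-y^0_1)(y^1_1-s)$ and omits the routine endpoint/quantifier bookkeeping that you spell out.
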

\begin{proof}
  The result follows from the identity:
  \begin{align*}
    &(h^1(s) - h^0(s))(s-y^0_1)(y^1_1 - s)  \\
    &\qquad = (y^1_2 - y^0_2)(s-y^0_1)(y^1_1-s) - a_0(y^1_1-s) -
      a_1(s-y^0_1) \\
    &\qquad = (y^1_1 - y^0_1) \left(t(1-t)c(y^0,y^1) - (1-t) a_0 - t
      a_1 \right),
  \end{align*}
  where $s\in (y^0_1,y^1_1)$ and $t = (s - y^0_1)/(y^1_1 - y^0_1)$.
\end{proof}

\begin{Lemma}
  \label{lem:6}
  For a set $A\subset \R^2\times \R^2$ the following conditions are
  equivalent:
  \begin{enumerate}[label = {\rm (\roman{*})}, ref={\rm (\roman{*})}]
  \item \label{item:4} If points $(x^0,y^0)$ and $(x^1, y^1)$ belong
    to $A$, then~\eqref{eq:7} holds.
  \item \label{item:5} There is $G\in \mathfrak{M}$ such that
    \begin{displaymath}
      c(x,y) \leq \phi_G(y), \quad (x,y)\in A.
    \end{displaymath}
  \end{enumerate}
\end{Lemma}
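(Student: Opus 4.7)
The plan is to prove the two implications separately. For (ii)$\implies$(i), observe that for each fixed $z\in\R^2$ the map $y \mapsto c(z,y) - y_1 y_2 = z_1z_2 - z_1y_2 - z_2y_1$ is affine in $y$, so $\phi_G - y_1y_2$ is concave in $y$ as an infimum of affine functions. Combining this concavity, the bound $\phi_G \leq 0$ (Lemma~\ref{lem:16}), and the elementary identity
\begin{displaymath}
(1-t)y^0_1y^0_2 + ty^1_1y^1_2 - y^t_1y^t_2 = t(1-t)\,c(y^0,y^1), \qquad y^t \set (1-t)y^0 + ty^1,
\end{displaymath}
one readily obtains $(1-t)\phi_G(y^0) + t\phi_G(y^1) \leq t(1-t)c(y^0,y^1)$; inequality \eqref{eq:7} then follows from $c(x^i,y^i) \leq \phi_G(y^i)$.

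The substantive direction is (i)$\implies$(ii), where I would construct $G$ directly from the hyperbolas through the points of $A$. Note first that $c(x,y) \leq 0$ on $A$ (set $t = 0$ in \eqref{eq:7}), and define
\begin{displaymath}
\overline{g}(s) \set \sup\!\left\{y_2 + \tfrac{c(x,y)}{s - y_1}:(x,y)\in A,\, y_1 < s\right\}, \quad
\underline{g}(s) \set \inf\!\left\{y_2 + \tfrac{c(x,y)}{s - y_1}:(x,y)\in A,\, y_1 > s\right\},
\end{displaymath}
with $\sup\emptyset = -\infty$ and $\inf\emptyset = +\infty$. Each indexed expression is non-decreasing in $s$ (using $c(x,y) \leq 0$), and the indexing set grows (resp.\ shrinks) with $s$ for the sup (resp.\ inf), so both $\overline{g}$ and $\underline{g}$ are non-decreasing. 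Applying Lemma~\ref{lem:5} with $a_i \uparrow c(x^i,y^i)$ to $(x^0,y^0),(x^1,y^1)\in A$ and then taking the sup over $(x^0,y^0)$ followed by the inf over $(x^1,y^1)$ yields $\overline{g} \leq \underline{g}$ pointwise. I would take $G$ to be the maximal monotone set obtained by extending the graph of $g \set \overline{g}$ with the vertical segments $\{s_0\} \times [\overline{g}(s_0^-), \overline{g}(s_0^+)]$ at jumps and with the downward ray $\{a\} \times (-\infty, \overline{g}(a^+)]$ at the left endpoint $a \set \inf \{y_1 : (x,y) \in A\}$ when $\overline{g}(a^+)$ is finite.

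To verify \eqref{eq:8}, I would fix $(x,y) \in A$ and $(s_0,r) \in G$ and split on the sign of $s_0 - y_1$. When $s_0 > y_1$, passing to the left limit in the defining inequality $\overline{g}(s) \geq y_2 + c(x,y)/(s - y_1)$ (valid for $s > y_1$) yields $r \geq \overline{g}(s_0^-) \geq y_2 + c(x,y)/(s_0 - y_1)$, hence $c((s_0,r),y) \geq c(x,y)$; for $s_0 < y_1$ the symmetric right-limit estimate $r \leq \overline{g}(s_0^+) \leq \underline{g}(s_0^+) \leq y_2 + c(x,y)/(s_0 - y_1)$, combined with $s_0 - y_1 < 0$, gives the same conclusion, while $s_0 = y_1$ is trivial. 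The main obstacle will be the boundary ray at $s_0 = a$ for $(x,y) \in A$ with $y_1 > a$, which requires the sharper bound $\overline{g}(a^+) \leq \underline{g}(a)$ rather than merely pointwise $\overline{g} \leq \underline{g}$. I plan to derive this from the same Lemma~\ref{lem:5}-based comparison, which yields $\overline{g}(s) \leq y_2 + c(x,y)/(s - y_1)$ for every $(x,y) \in A$ with $y_1 > s$; letting $s \downarrow a$ (keeping $s < y_1$) and then taking the infimum over such $(x,y)$ gives the required inequality.
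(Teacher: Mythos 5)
Your proof is correct but departs from the paper's on both implications. The paper works geometrically with the open sets $B^0,B^1$ (unions of below/above-hyperbola regions $B^i(x,y)$) and shows that (i) is equivalent, via Lemma~\ref{lem:5}, to $B^0\cap B^1=\emptyset$, while (ii) is equivalent to the existence of a maximal monotone set inside $C=\R^2\setminus(B^0\cup B^1)$; the key observation for the hard direction is simply that when $B^0$ and $B^1$ are disjoint their boundaries lie in $C$ and are themselves maximal monotone, being envelopes of increasing hyperbolas. You instead prove (ii)$\implies$(i) by a direct analytic argument from the concavity of $y\mapsto\phi_G(y)-y_1y_2$ (an infimum of affine functions) together with $\phi_G\le 0$; this is a clean, self-contained alternative and is in the spirit of the paper's later Lemma~\ref{lem:9}, except that there the paper invokes Lemma~\ref{lem:6} rather than the reverse. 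For (i)$\implies$(ii) you construct by hand the very same set the paper calls $\partial B^0$: your $\overline g(s)$ is exactly $\sup\{z_2:(s,z_2)\in B^0\}$, so the filled graph you build, completed by vertical segments at jumps and a downward ray at the left end of its effective domain, is $\partial B^0$. Your route makes the maximality of $G$ verifiable step by step, which the paper leaves to the reader, at the cost of some boundary bookkeeping you gloss over: if $\overline g$ jumps to $+\infty$ at a finite $s_1$, an upward ray at $s_1$ is needed (this can be subsumed under ``vertical segments at jumps'' if you explicitly allow jumps to $\pm\infty$ to produce rays), and the degenerate case $A=\emptyset$, where $\overline g\equiv-\infty$ and your construction is vacuous, requires the trivial remark that (ii) then holds for an arbitrary $G\in\mathfrak{M}$. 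These are gaps in exposition rather than in the logic, and the verification of~\eqref{eq:8} via one-sided limits of $\overline g$ and $\underline g$, including the boundary-ray estimate $\overline g(a^+)\le\underline g(a)$, is carried out correctly.
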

 
\begin{proof}
  We observe first that under either~\ref{item:4} or~\ref{item:5},
  \begin{displaymath}
    c(x,y) \leq 0, \quad (x,y) \in A. 
  \end{displaymath}
  Indeed, under~\ref{item:4} this inequality follows
  from~\eqref{eq:7}, while under~\ref{item:5} it holds because
  $\phi_G\leq 0$. We define the open sets
  \begin{align*}
    B^i = \cup_{(x,y) \in A} B^i(x,y),\quad i=0,1,
  \end{align*}
  where, for $(x,y) \in A$,
  \begin{align*}
    B^0(x,y) & = \descr{z\in \R^2}{c(z,y) < c(x,y), \; z_1 >
               y_1} \\
             & = \descr{z\in \R^2}{z_2 < y_2 + \frac{c(x,y)}{z_1-y_1}, \;
               z_1 > y_1}, \\
    B^1(x,y) & = \descr{z\in \R^2}{c(z,y) < c(x,y), \; z_1 <
               y_1} \\
             & = \descr{z\in \R^2}{z_2> y_2 + \frac{c(x,y)}{z_1-y_1}, \; z_1 <
               y_1}.
  \end{align*}
  The boundaries of $B^i$, $i=0,1$, are, respectively, upper and lower
  envelopes of the graphs of increasing hyperbolas and thus, are
  maximal monotone sets.

  By Lemma~\ref{lem:5}, item~\ref{item:4} holds if and only if the
  sets $B^0$ and $B^1$ are disjoint:
  \begin{equation}
    \label{eq:14}
    B^0 \cap B^1 = \emptyset.   
  \end{equation}
  On the other hand, item~\ref{item:5} holds if and only if the closed
  set
  \begin{displaymath}
    C = \cap_{(x,y)\in A} \descr{z\in \R^2}{c(x,y) \leq c(z,y)} 
  \end{displaymath}
  contains a maximal monotone set $G$. As
  \begin{displaymath}
    C = \R^2\setminus \left(B^0 \cup B^1 \right),
  \end{displaymath}
  every such set $G$ separates $B^0$ and $B^1$. Hence, its existence
  yields~\eqref{eq:14} and then~\ref{item:4}. Conversely, if the sets
  $B^0$ and $B^1$ are disjoint, then their boundaries belong to
  $C$. As the boundaries are maximal monotone sets, we
  obtain~\ref{item:5}.
\end{proof}

The remaining assertions of the theorem follow from
Lemma~\ref{lem:8}. A key role is played by inequality~\eqref{eq:15}.

\begin{Lemma}
  \label{lem:7}
  Let $\gamma \in \Gamma(\nu)$ and $\mu$ be its $x$-marginal. For
  every $G\in \mathfrak{M}$ we have that
  \begin{displaymath}
    \gamma(\phi_G(y) - c(x,y)|x) \leq \phi_G(x) \leq 0, \quad \gamma\text{-a.s.}, 
  \end{displaymath}
  and then that
  \begin{equation}
    \label{eq:15}
    \int \phi_G(y) d\nu - \int c(x,y) d\gamma \leq \int \phi_G(x) d\mu
    \leq 0.
  \end{equation}
\end{Lemma}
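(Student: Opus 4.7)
The plan is to obtain the pointwise inequality from the martingale property $\gamma(y|x)=x$ together with the definition $\phi_G(y) = \inf_{z \in G} c(z,y)$, and then to integrate against $\mu$ for the second inequality; the bound $\phi_G \leq 0$ is already supplied by Lemma~\ref{lem:16}.

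The first step is to compute, for each fixed $z \in G$, the conditional expectation $\gamma(c(z,y) - c(x,y)\,|\,x)$. Expanding
\[
c(z,y) - c(x,y) = z_1 z_2 - x_1 x_2 - (z_1-x_1) y_2 - (z_2-x_2) y_1
\]
and using $\gamma(y_i|x) = x_i$, the $y$-linear terms collapse to $-(z_1-x_1)x_2 - (z_2-x_2)x_1$, yielding the key identity
\[
\gamma\bigl(c(z,y) - c(x,y) \,|\, x\bigr) = (z_1 - x_1)(z_2 - x_2) = c(z,x), \quad \gamma\text{-a.s.}
\]
Since $\phi_G(y) \leq c(z,y)$ by definition, taking conditional expectation in the inequality $\phi_G(y) - c(x,y) \leq c(z,y) - c(x,y)$ yields $\gamma(\phi_G(y) - c(x,y)|x) \leq c(z,x)$ for every $z \in G$. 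Passing to the infimum over $z$ (restricted to a countable dense subset of the closed set $G$, which suffices by continuity of $c(\cdot,x)$) gives the first claimed bound, and $\phi_G(x) \leq 0$ follows from Lemma~\ref{lem:16}.

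The integrated inequality~\eqref{eq:15} then follows by integrating the pointwise bound against $\mu$; the left-hand side becomes $\int (\phi_G(y) - c(x,y))\,d\gamma = \int \phi_G\, d\nu - \int c(x,y)\, d\gamma$, using that $\gamma$ has $y$-marginal $\nu$ and that $c(x,y)$ is $\gamma$-integrable (quadratic growth plus $\gamma \in \mathcal{P}_2$).

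The only obstacle is the book-keeping around $\phi_G$ possibly taking the value $-\infty$ while being an infimum over an uncountable set. For any fixed $z^*\in G$ (which exists since every maximal monotone set in $\R^2$ is nonempty), the envelope $\phi_G(y)\le c(z^*,y)$ is $\nu$-integrable, so $\phi_G^+ = 0$ and all the conditional expectations and integrals above are unambiguously defined in $[-\infty,0]$; the continuity of $c(\cdot,x)$ legitimizes reducing the infimum over $G$ to an infimum over a countable dense subset when pulling it through the conditional expectation. These are routine given the properties of $\phi_G$ collected in Appendix~\ref{sec:dual-space-functions}.
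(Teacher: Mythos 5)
Your proof is correct and follows essentially the same route as the paper's: establish the pointwise identity $\gamma\bigl(c(z,y) - c(x,y)\,\vert\,x\bigr) = c(z,x)$ for fixed $z\in G$ using the martingale property, then pass to the infimum over a countable dense subset of $G$ and integrate. Your addition of the explicit expansion and the book-keeping around $\phi_G$ possibly taking the value $-\infty$ is sound and slightly more careful than the paper's one-line argument, but it is the same idea.
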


\begin{proof}
  We only need to prove the inequality with conditional
  expectations. From Lemma~\ref{lem:16} we know that $\phi_G\leq 0$.
  As $x=\gamma(y|x)$, we deduce that for every $r\in \R^2$:
  \begin{displaymath}
    \gamma(c(y,r) - c(x,y)|x) = c(x,r), \quad \gamma\text{-a.s.},  
  \end{displaymath}
  and taking $\inf$ over a dense countable set of $r\in G$ obtain the
  result.
\end{proof}

The following lemma completes the proof of the theorem.

\begin{Lemma}
  \label{lem:8}
  Let $\gamma \in \Gamma(\nu)$, $\mu$ be its $x$-marginal, and
  $G\in \mathfrak{M}$ be such that
  \begin{equation}
    \label{eq:16}
    \int c(x,y) d\gamma \leq \int \phi_G(y) d\nu.
  \end{equation}
  Then in~\eqref{eq:16} we actually have the equality, $\gamma$ is an
  optimal plan for~\eqref{eq:5}, the set $G$ contains $\supp{\mu}$,
  and
  \begin{equation}
    \label{eq:17}
    c(x,y) = \phi_G(y) = \min_{r\in G} c(r,y), \quad (x,y)\in \supp{\gamma}.
  \end{equation} 
\end{Lemma}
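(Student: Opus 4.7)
The plan is to squeeze all the key quantities to zero by combining the hypothesis~\eqref{eq:16} with Lemma~\ref{lem:7}. Lemma~\ref{lem:7} gives the chain
\[
  \int \phi_G(y)\,d\nu - \int c(x,y)\,d\gamma \;\leq\; \int \phi_G(x)\,d\mu \;\leq\; 0,
\]
while~\eqref{eq:16} says the left-most quantity is $\geq 0$. Hence all three quantities vanish, which produces equality in~\eqref{eq:16} together with the two identities $\int \phi_G\,d\mu = 0$ and $\int(\phi_G(y)-c(x,y))\,d\gamma = 0$.

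From Lemma~\ref{lem:16} we have $\phi_G\leq 0$ and $G = \{\phi_G = 0\}$, so $\int \phi_G\,d\mu = 0$ forces $\mu(G) = 1$. Maximal monotone sets in $\R^2$ are closed (being graphs of subdifferentials of closed convex functions), so $\supp{\mu}\subset G$. Since the $x$-projection of $\supp{\gamma}$ lies inside $\supp{\mu}$, every $(x,y)\in \supp{\gamma}$ satisfies $x\in G$ and therefore, by the very definition of $\phi_G$, $c(x,y) \geq \phi_G(y)$.

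Combining the pointwise bound $c-\phi_G\geq 0$ on $\supp{\gamma}$ with the integral identity $\int(c(x,y)-\phi_G(y))\,d\gamma = 0$, the non-negative integrand vanishes $\gamma$-a.s. The subtle step, which I view as the main obstacle, is upgrading this to a pointwise equality on all of $\supp{\gamma}$. Here I would use that $c$ is continuous and $\phi_G$, as an infimum of continuous functions, is upper semi-continuous, so $c-\phi_G$ is lower semi-continuous. If $c(x_0,y_0)>\phi_G(y_0)$ at some $(x_0,y_0)\in \supp{\gamma}$, then $c-\phi_G>0$ on an open neighborhood of $(x_0,y_0)$, and that neighborhood has strictly positive $\gamma$-mass by definition of the support, contradicting $\gamma$-a.s.\ vanishing. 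Therefore $c(x,y) = \phi_G(y)$ holds on all of $\supp{\gamma}$, and since $x\in G$ the infimum defining $\phi_G(y)$ is attained, giving~\eqref{eq:17}.

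Finally, optimality of $\gamma$ is immediate: for any $\gamma'\in \Gamma(\nu)$, Lemma~\ref{lem:7} yields
\[
  \int c(x,y)\,d\gamma' \;\geq\; \int \phi_G(y)\,d\nu \;=\; \int c(x,y)\,d\gamma,
\]
where the equality is the one just established, so $\gamma$ solves~\eqref{eq:5}.
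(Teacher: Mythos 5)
Your proof is correct and follows essentially the same route as the paper: squeeze all the Lemma~\ref{lem:7} inequalities against~\eqref{eq:16} to force $\int\phi_G\,d\mu = 0$ and the $\gamma$-a.s.\ equality $c=\phi_G$, then upgrade to a pointwise statement on $\supp\gamma$ via semi-continuity. The only cosmetic difference is that you phrase the last upgrade as lower semi-continuity of $c-\phi_G$ and an open-neighborhood/positive-mass contradiction, whereas the paper takes a sequence in $\supp\gamma$ and passes to the limit using upper semi-continuity of $\phi_G$ — both rest on exactly the same fact.
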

\begin{proof}
  We shall write $\phi$ for $\phi_G$.  From~\eqref{eq:15}
  and~\eqref{eq:16} we deduce that $\gamma$ is a solution
  to~\eqref{eq:5}, that in~\eqref{eq:16} we have an equality, and that
  \begin{displaymath}
    \int \phi(x) d\mu = 0.
  \end{displaymath}
  Lemma~\ref{lem:16} states that $\phi\leq 0$ and
  $G=\descr{x\in \R^2}{\phi(x) = 0}$. It follows that $\mu(G)=1$.
  Being a closed set, $G$ contains $\supp{\mu}$. In particular, if
  $(x,y)\in \supp{\gamma}$, then $x\in G$.  It follows that
  \begin{displaymath}
    c(x,y) \geq \inf_{r\in G} c(r,y) = \phi(y), \quad (x,y) \in
    \supp{\gamma}. 
  \end{displaymath}
  Accounting for~\eqref{eq:16}, we deduce that
  \begin{displaymath}
    c(x,y) = \phi(y), \quad \gamma\text{-a.s.}. 
  \end{displaymath}
  Hence, for every $(x,y)\in \supp{\gamma}$ we can find a sequence
  $\braces{(x_n,y_n)} \subset \supp{\gamma}$ that converges to $(x,y)$
  and such that $c(x_n,y_n) = \phi(y_n)$, $n\geq 1$.  Being a
  pointwise infinum of continuous functions, the function $\phi$ is
  upper semi-continuous. It follows that
  \begin{displaymath}
    c(x,y) = \lim_{n\to \infty} c(x_n,y_n) = \lim_{n\to \infty}
    \phi(y_n) \leq \phi(y),   
  \end{displaymath}
  and we obtain~\eqref{eq:17}.
\end{proof}

\section{Dual problem}
\label{sec:dual-probl}

In view of Theorem~\ref{th:1} and Lemma~\ref{lem:7}, a natural dual
problem to~\eqref{eq:5} is to
\begin{equation}
  \label{eq:18}
  \text{maximize} \quad \int \phi_G d\nu \quad \text{over}\quad G\in
  \mathfrak{M}.  
\end{equation}
Such problem appears in \citet{RochVila:94} in connection to their
study of Kyle-type equilibrium with insider;
see~Section~\ref{sec:equil-with-insid}. They use a direct method based
on the properties of the space of closed graph correspondences and
assume that $\nu$ has a compactly supported density.

We recall that $G = \descr{x\in \R^2}{\phi_G(x) = 0}$,
$G\in \mathfrak{M}$, and thus the family $\mathfrak{M}$ of all maximal
monotone sets in $\R^2$ is in one-to-one correspondence with the
family of functions
\begin{displaymath}
  \Phi \set \descr{\phi_G}{G\in \mathfrak{M}}.
\end{displaymath}
Hence, ~\eqref{eq:18} is equivalent to the problem, where we
\begin{displaymath}
  \text{maximize} \quad \int \phi d\nu \quad \text{over}\quad \phi\in
  \Phi.
\end{displaymath}
A technical inconvenience of the set $\Phi$ is the absence of
convexity. It turns out that the set of functions dominated by the
elements of $\Phi$ is not only convex, but also admits a
self-contained description related to item~\ref{item:2} of
Theorem~\ref{th:1}.
\begin{Lemma}
  \label{lem:9}
  Let $\map{\phi}{\R^2}{[-\infty,0]}$ be a Borel function. Then
  $\phi\leq \phi_G$ for some $G\in \mathfrak{M}$ if and only if
  \begin{equation}
    \label{eq:19}
    (1-t) \phi(y^0) + t \phi(y^1) \leq t (1-t) c(y^0, y^1), \quad  
    y^0,y^1\in \R^2, \;  t\in [0,1].    
  \end{equation}
  The set of such functions $\phi$ is convex.
\end{Lemma}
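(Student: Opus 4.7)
The plan is to reduce Lemma~\ref{lem:9} to Lemma~\ref{lem:6} via the ``sub-level set''
\begin{displaymath}
  A_\phi \set \descr{(x,y) \in \R^2 \times \R^2}{c(x,y) \leq \phi(y)}
\end{displaymath}
by proving two equivalences: (a)~\eqref{eq:19} for $\phi$ is the same as condition~\ref{item:4} of Lemma~\ref{lem:6} applied to $A = A_\phi$; and (b)~the inequality ``$c(x,y)\leq \phi_G(y)$ on $A_\phi$'' appearing in item~\ref{item:5} of Lemma~\ref{lem:6} is the same as $\phi\leq\phi_G$. Chaining these through Lemma~\ref{lem:6} then delivers the main assertion of Lemma~\ref{lem:9}.

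The elementary observation that drives both reductions is that for every $y\in\R^2$ and every $a\in\R$ the point $(y_1+1,\,y_2+a)\in\R^2$ satisfies $c(\cdot,y)=a$. Hence for every $y$ with $\phi(y)>-\infty$ and every $a\leq\phi(y)$ one can realise $(x,y)\in A_\phi$ with $c(x,y)=a$, while for $y$ with $\phi(y)=-\infty$ the fibre $A_\phi\cap(\R^2\times\braces{y})$ is empty.

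For~(a), the implication~\eqref{eq:19}$\Longrightarrow$\ref{item:4} is immediate since $(x^i,y^i)\in A_\phi$ forces $c(x^i,y^i)\leq\phi(y^i)$. For the converse, at $y^0,y^1$ with $\phi(y^i)>-\infty$ I would pick $x^i$ with $c(x^i,y^i)=\phi(y^i)-\varepsilon$, apply~\eqref{eq:7} in $A_\phi$, and let $\varepsilon\to 0$; the case where some $\phi(y^i)=-\infty$ is handled separately, using $\phi\leq 0$ at $t\in\braces{0,1}$ and noting that the left-hand side of~\eqref{eq:19} is $-\infty$ for $t\in(0,1)$. For~(b), $\phi\leq\phi_G$ trivially gives $c(x,y)\leq\phi_G(y)$ on $A_\phi$; conversely, at any $y$ with $\phi(y)>-\infty$ the pair $(x,y)\in A_\phi$ with $c(x,y)=\phi(y)$ forces $\phi(y)\leq\phi_G(y)$, and the case $\phi(y)=-\infty$ is vacuous.

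Convexity of the set of $\phi$ satisfying~\eqref{eq:19} is then automatic from the linearity of~\eqref{eq:19} in $\phi$ (with the convention $\lambda\cdot(-\infty)=-\infty$ for $\lambda\in(0,1]$, which keeps convex combinations valued in $[-\infty,0]$). The main delicacy throughout is the bookkeeping around $\phi(y)=-\infty$, for which the corresponding fibre of $A_\phi$ is empty and one cannot route the argument through $A_\phi$ at that $y$; once these boundary cases are handled by hand, the rest of the lemma is a direct repackaging of Lemma~\ref{lem:6}.
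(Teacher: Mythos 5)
Your proof is correct and follows essentially the same route as the paper's: reduce to Lemma~\ref{lem:6} by a suitable choice of the set $A$. The paper takes the ``graph'' $A=\descr{(x,y)\in\R^2\times\R^2}{c(x,y)=\phi(y)}$, for which item~\ref{item:4} of Lemma~\ref{lem:6} is literally~\eqref{eq:19} (the fibres with $\phi(y)=-\infty$ are empty, and there~\eqref{eq:19} is automatic) and item~\ref{item:5} is literally $\phi\leq\phi_G$; your sub-level set $A_\phi=\descr{(x,y)}{c(x,y)\leq\phi(y)}$ needs the short extra translations you give in~(a) and~(b), but those are sound. One small simplification: the $\varepsilon$-regularisation in your converse of~(a) is unnecessary, since whenever $\phi(y^i)>-\infty$ you can pick $x^i$ with $c(x^i,y^i)=\phi(y^i)$ exactly (the same point that would witness $(x^i,y^i)$ in the paper's choice of $A$), and pass to~\eqref{eq:19} without a limit.
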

\begin{proof}
  The result follows directly from Lemma~\ref{lem:6}, where we take
  \begin{displaymath}
    A = \descr{(x,y)\in \R^2\times \R^2}{c(x,y) = \phi(y)}. 
  \end{displaymath} 
  Clearly, the family of functions $\phi$ satisfying~\eqref{eq:19} is
  convex.
\end{proof}

\begin{Theorem}
  \label{th:2}
  Let $\nu \in \mathcal{P}_2(\R^2)$.  We have that
  \begin{displaymath}
    \min_{\gamma \in \Gamma(\nu)} \int c(x,y) d\gamma = \max_{G\in
      \mathfrak{M}} \int \phi_G d\nu, 
  \end{displaymath}
  where the lower and upper bounds are attained at respective
  solutions to~\eqref{eq:5} and~\eqref{eq:18}. A probability measure
  $\gamma \in \Gamma(\nu)$ and a maximal monotone set $G$ are such
  solutions if and only if
  \begin{equation}
    \label{eq:20}
    c(x,y) = \phi_G(y), \quad (x,y)\in \supp{\gamma}. 
  \end{equation}
  In this case, $G$ contains the support of the $x$-marginal of
  $\gamma$. Moreover, $\phi_G$ and $G$ are uniquely defined on
  $\supp{\nu}$, that is,
  \begin{align*}
    \phi_{G}(y) &= \phi_{\widetilde{G}}(y), \quad y\in \supp{\nu}, \\
    G\cap \supp{\nu} &= \widetilde G \cap \supp{\nu},
  \end{align*}
  for any other solution $\widetilde{G}$ to~\eqref{eq:18}. In
  particular, $\phi_G$ and $G$ are unique if $\supp{\nu}=\R^2$.
\end{Theorem}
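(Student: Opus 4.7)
The plan is to combine Theorem~\ref{th:1} with Lemmas~\ref{lem:7} and~\ref{lem:8}. First, Theorem~\ref{th:1} yields an optimal plan $\gamma$ for~\eqref{eq:5} together with a maximal monotone set $G$ satisfying $c(x,y) = \phi_G(y)$ on $\supp\gamma$. Integrating this identity against $\gamma$, whose $y$-marginal is $\nu$, gives $\int c\,d\gamma = \int \phi_G\,d\nu$. Lemma~\ref{lem:7} supplies the weak-duality inequality $\int \phi_H\,d\nu \le \int c\,d\zeta$ valid for every $\zeta\in\Gamma(\nu)$ and $H\in\mathfrak{M}$; together with the equality just produced this yields the asserted min-max identity and shows that $(\gamma, G)$ attains both extrema simultaneously.

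For the equivalence with~\eqref{eq:20}, if $\gamma$ and $G$ are both optimal then $\int c\,d\gamma = \int \phi_G\,d\nu$, so the hypothesis~\eqref{eq:16} of Lemma~\ref{lem:8} holds (with equality) and that lemma delivers both $\supp\mu\subseteq G$ and identity~\eqref{eq:20}. Conversely, if~\eqref{eq:20} holds, integrating against $\gamma$ produces $\int c\,d\gamma = \int \phi_G\,d\nu$, whence the min-max identity together with weak duality forces both $\gamma$ and $G$ to be optimal.

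For the uniqueness assertion, fix an optimal plan $\gamma$ with $x$-marginal $\mu$ and let $G, \widetilde G$ be two maximizers of~\eqref{eq:18}. By the preceding paragraph, $\supp\mu \subseteq G \cap \widetilde G$ and $c(x,y) = \phi_G(y) = \phi_{\widetilde G}(y)$ on $\supp\gamma$. Denote by $D$ the $y$-projection of $\supp\gamma$. For any $y\in D$ pick $x\in\supp\mu$ with $(x,y)\in\supp\gamma$; since $x\in\widetilde G$, we obtain $\phi_{\widetilde G}(y) \le c(x,y) = \phi_G(y)$, and swapping the roles of $G$ and $\widetilde G$ yields the reverse inequality. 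Hence $\phi_G = \phi_{\widetilde G}$ on $D$, a subset of $\supp\nu$ with full $\nu$-mass, which is therefore dense in $\supp\nu$. The main obstacle is to upgrade this equality to every point of $\supp\nu$: upper semi-continuity of $\phi_G$ and $\phi_{\widetilde G}$ alone is insufficient, and I expect the argument to combine the secant inequality~\eqref{eq:19} of Lemma~\ref{lem:9} with the structural properties of $\phi_G$ recorded in Lemma~\ref{lem:16} to rule out any point of $\supp\nu$ at which the two functions disagree. Once $\phi_G = \phi_{\widetilde G}$ is proved on $\supp\nu$, the identity $G \cap \supp\nu = \widetilde G \cap \supp\nu$ follows from $G = \{\phi_G = 0\}$ applied to both maximizers.
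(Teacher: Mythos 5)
Your first two paragraphs match the paper's argument exactly: the min--max identity and the characterization via~\eqref{eq:20} are obtained precisely by combining Theorem~\ref{th:1} with Lemmas~\ref{lem:7} and~\ref{lem:8}, and the derivation of $\phi_G=\phi_{\widetilde G}$ on $P_y$, the $y$-projection of $\supp\gamma$, is correct (in fact it is more direct to observe that~\eqref{eq:20} applied to both $G$ and $\widetilde G$ gives $\phi_G(y)=c(x,y)=\phi_{\widetilde G}(y)$ for $(x,y)\in\supp\gamma$, without the two-sided inequality argument).

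Where the proposal has a genuine gap is exactly at the step you flag: extending $\phi_G=\phi_{\widetilde G}$ from the dense set $P_y$ to all of $\supp\nu$. You correctly observe that upper semi-continuity alone is insufficient (two u.s.c.\ functions can agree on a dense set and still differ on its boundary), but the ingredients you propose --- the secant inequality~\eqref{eq:19} of Lemma~\ref{lem:9} and the range/zero-set properties of Lemma~\ref{lem:16} --- do not close this gap, and you do not carry the argument through. The tool the paper uses is Lemma~\ref{lem:18}: because $\psi_G=y_1y_2-\phi_G$ is a closed \emph{convex} function and, by Lemma~\ref{lem:17}, the boundary of $\dom\phi_G$ consists of horizontal and vertical segments on which $\phi_G$ remains finite whenever it is finite anywhere along them, one obtains genuine sequential \emph{continuity} of $\phi_G$ along any sequence $(y_n)\subset\dom\phi_G$ with $y_n\to y$ (not merely upper semi-continuity). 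To apply this, note that for $(x_n,y_n)\in\supp\gamma$ one has $\phi_G(y_n)=c(x_n,y_n)>-\infty$ and likewise $\phi_{\widetilde G}(y_n)>-\infty$, so $P_y\subset\dom\phi_G\cap\dom\phi_{\widetilde G}$. Since $\nu$ is concentrated on $P_y$, every $y\in\supp\nu$ is a limit of points of $P_y$, and Lemma~\ref{lem:18} applied to each of $\phi_G$ and $\phi_{\widetilde G}$ gives $\phi_G(y)=\lim\phi_G(y_n)=\lim\phi_{\widetilde G}(y_n)=\phi_{\widetilde G}(y)$. The final assertion $G\cap\supp\nu=\widetilde G\cap\supp\nu$ then follows from $G=\{\phi_G=0\}$, as you note.
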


\begin{proof}
  With an exception of the uniqueness part, all other assertions
  follow directly from Theorem~\ref{th:1} and Lemmas~\ref{lem:7}
  and~\ref{lem:8}.

  Let $\gamma$ be a solution to~\eqref{eq:5}, $G$ and $\widetilde G$
  be solutions to~\eqref{eq:18} and denote $\phi= \phi_{G}$ and
  $\widetilde \phi = \phi_{\widetilde G}$.  From~\eqref{eq:20} we
  deduce that the functions $\phi$ and $\widetilde{\phi}$ coincide on
  $P_y$, the projection of $\supp{\gamma}$ on $y$-coordinates. Since
  every $y\in \supp{\nu}$ is the limit of a sequence
  $(y_n) \subset P_y$, Lemma~\ref{lem:18} yields that
  \begin{displaymath}
    \phi(y) = \lim_{n\to \infty} \phi(y_n) = \lim_{n\to \infty}
    \widetilde{\phi}(y_n) = \widetilde{\phi}(y).
  \end{displaymath}
  We have proved the uniqueness of $\phi_G$ on $\supp{\nu}$. The
  uniqueness of $G$ on $\supp{\nu}$ holds as
  $G = \descr{x\in \R^2}{\phi_G(x) = 0}$.
\end{proof}

\section{Optimal maps}
\label{sec:optimal-map}

For simplicity of notations, we slightly modify the setup.  We start
with a 2-dimensional random variable $Y=(Y_1,Y_2)$ having a finite
second moment:
$Y\in \mathcal{L}^2 = \mathcal{L}^2(\Omega, \mathcal{F}, \mathbb{P})$.
As usual, we identify random variables that differ only on a set of
measure zero.  Our goal is to
\begin{equation}
  \label{eq:21}
  \text{minimize} \quad \EP{c(X,Y)} 
  \quad \text{over}\quad X \in \mathcal{X}(Y)   
\end{equation} 
for the same cost function $c(x,y) = (x_1-y_1)(x_2-y_2)$ and the
domain
\begin{displaymath}
  \mathcal{X}(Y) \set \descr{X=(X_1,X_2)\in \mathcal{L}^2}{X \text{ is
      $Y$-measurable and } \cEP{Y}{X} = X}.  
\end{displaymath}
We denote $\nu = \law{Y}$ and observe that $\law{X,Y} \in \Gamma(\nu)$
for every $X\in \mathcal{X}(Y)$.  Thus, optimal \emph{plan}
problem~\eqref{eq:5} may be viewed as a Kantorovich-type relaxation of
optimal \emph{map} problem~\eqref{eq:21}. In general,
\begin{equation}
  \label{eq:22}
  \min_{\gamma\in \Gamma(\nu)} \int c(x,y) d\gamma \leq
  \inf_{X \in \mathcal{X}(Y)} \EP{c(X,Y)},
\end{equation}
and the inequality may be strict and an optimal map may not exist as
Examples~\ref{ex:2} and~\ref{ex:3} show.

The main results of this sections are Theorems~\ref{th:3}, \ref{th:4},
and \ref{th:5}. Theorem~\ref{th:3} yields the equality
in~\eqref{eq:22} provided that $\nu=\Law(Y)$ is atomless.
Theorem~\ref{th:4} shows the existence of an optimal map if $\nu$ is
$\mathcal{D}$-regular in the sense of
Definition~\ref{def:1}. Theorem~\ref{th:5} establishes the uniqueness
of optimal plan and map if, in addition, every component $Y_i$ has a
continuous distribution function.  The last two theorems play a key
role in the study of equilibrium in
Section~\ref{sec:equil-with-insid}.

We shall use the notations from
Appendix~\ref{sec:dual-space-functions} related to the function
$\phi = \phi_G$, where $G\in\mathfrak{M}$.  In particular,
$D^c = (D^c_1, D^c_2)$ stands for the differential operator associated
with the cost function $c=c(x,y)$:
\begin{align*}
  D^c_1\phi(y) = y_1 - \frac{\partial \phi}{\partial y_2}(y), \quad
  D^c_2\phi(y) = y_2 - \frac{\partial \phi}{\partial y_1}(y), \quad
  y \in \dom{\nabla \phi}, 
\end{align*}
where $\dom{\nabla \phi}$ is the set of points where $\phi$ is
differentiable.  We denote by $E^G = E_1^G \cup E_2^G$ the union of
the vertical and horizontal line segments of $G$:
\begin{equation}
  \label{eq:23}
  \begin{split}
    E_i^G(t) &= \descr{x=(x_1,x_2)\in G}{x_i=t}, \quad t\in \R,\\
    \mathcal{T}_i^G &= \descr{t\in \R}{{E_i^G(t)} \;\text{has more
        than one point}}, \\
    E_i^G & = \cup_{t\in \mathcal{T}_i^G} E_i^G(t), \quad i=1,2.
  \end{split}
\end{equation}
Clearly, the sets $(\mathcal{T}^G_i)$ are countable. Finally, we
define
\begin{align*}
  \Arg_G(y) & = \argmin_{x\in G}c(x,y) = \descr{x\in G}{\phi_G(y) =
              c(x,y)}, \\ 
  \dom{\Arg_G} & = \descr{y\in \R^2}{\Arg_G(y) \not=\emptyset}.
\end{align*}

The following result is similar to that of~\citet{Prat:07} obtained
for the classical unconstrained optimal transport problem.

\begin{Theorem}
  \label{th:3}
  Let $Y=(Y_1,Y_2)\in \mathcal{L}^2$ and suppose that $\nu = \law{Y}$
  is atomless. Then plan and map problems~\eqref{eq:5} and
  \eqref{eq:21} have identical values:
  \begin{displaymath}
    \min_{\gamma\in \Gamma(\nu)} \int c(x,y) d\gamma =
    \inf_{X \in \mathcal{X}(Y)} \EP{c(X,Y)}.
  \end{displaymath}
\end{Theorem}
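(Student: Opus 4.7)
The inequality $\min_{\gamma\in \Gamma(\nu)}\int c\, d\gamma \leq \inf_{X\in \mathcal{X}(Y)}\EP{c(X,Y)}$ is~\eqref{eq:22}; my plan is to establish the reverse by constructing a sequence $(X_n)\subset \mathcal{X}(Y)$ with $\EP{c(X_n,Y)}\to \int c\, d\gamma^{*}$, where $\gamma^{*}$ is an optimal plan (existing by Theorem~\ref{th:1}) and $\mu^{*}$ denotes its $x$-marginal. The idea is a doubly-discretized barycentric map: the atomlessness of $\nu$ lets me realize a prescribed joint mass distribution via a genuine $Y$-measurable function, and taking barycenters on the realized $y$-cells automatically preserves the martingale constraint.

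Concretely, I fix refining sequences of finite Borel partitions $\{Q_i^n\}$ of $\R^2$ (on the $x$-side) and $\{R_j^n\}$ of $\R^2$ (on the $y$-side), with mesh tending to zero on each compact set. Let $\alpha_{ij}^n \set \gamma^{*}(Q_i^n\times R_j^n)$. For each $j$ the restriction of $\nu$ to $R_j^n$ is atomless, so I can partition $R_j^n$ into Borel pieces $\tilde B_{ij}^n$ with $\nu(\tilde B_{ij}^n)=\alpha_{ij}^n$; set $B_i^n \set \bigcup_j \tilde B_{ij}^n$, so that $\nu(B_i^n) = \sum_j \alpha_{ij}^n = \mu^{*}(Q_i^n)$. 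I then define
\begin{displaymath}
  X_n \set \sum_{i:\,\nu(B_i^n)>0}\tilde a_i^n\, \ind{Y\in B_i^n}, \qquad \tilde a_i^n \set \cEP{Y}{Y\in B_i^n}.
\end{displaymath}
By construction $X_n$ is $Y$-measurable, and the conditional mean of $Y$ on the level set $\{X_n=\tilde a_i^n\}$ equals $\tilde a_i^n$, so $\cEP{Y}{X_n}=X_n$ and $X_n\in \mathcal{X}(Y)$; Jensen's inequality gives the uniform bound $\EP{\abs{X_n}^2}\leq \EP{\abs{Y}^2}$.

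By Remark~\ref{rem:1} the convergence $\EP{c(X_n,Y)}\to \int c\, d\gamma^{*}$ reduces to $\EP{X_{n,1}X_{n,2}}\to \int x_1 x_2\, d\mu^{*}$. The martingale property of $\gamma^{*}$ yields
\begin{displaymath}
  \int_{Q_i^n} x\, d\mu^{*} = \int_{Q_i^n\times \R^2} y\, d\gamma^{*} = \sum_j \int_{Q_i^n\times R_j^n} y\, d\gamma^{*},
\end{displaymath}
while $\int_{B_i^n} y\, d\nu = \sum_j \int_{\tilde B_{ij}^n} y\, d\nu$. Since $\nu|_{\tilde B_{ij}^n}$ and the $y$-projection of $\gamma^{*}|_{Q_i^n\times R_j^n}$ both carry mass $\alpha_{ij}^n$ supported in $R_j^n$, their $j$-th slab integrals agree up to error $\diam(R_j^n)\cdot \alpha_{ij}^n$. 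Consequently $\tilde a_i^n$ differs from the $\mu^{*}$-barycenter $\bar a_i^n$ of $Q_i^n$ by $o(1)$ uniformly on compacts. Combining this with the $L^2$-martingale convergence $\bar a_i^n \ind{x\in Q_i^n}\to x$ in $L^2(\mu^{*})$, Cauchy--Schwarz against the uniform $L^2$-bound, and uniform integrability of $|Y|^2$ to handle unbounded tail cells, I obtain the desired limit.

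The principal obstacle is preserving the martingale identity at each finite $n$: if I assigned $X_n$ any fixed representative of $Q_i^n$, the condition $\cEP{Y}{X_n}=X_n$ would generically fail. The remedy is the choice $\tilde a_i^n = \cEP{Y}{Y\in B_i^n}$, the $\nu$-barycenter of the \emph{realized} $y$-cell rather than of $Q_i^n$. This enforces the martingale constraint tautologically, and the martingale property of $\gamma^{*}$ itself guarantees that $\tilde a_i^n$ is asymptotically indistinguishable from the $\mu^{*}$-barycenter of $Q_i^n$, which is what the cost functional ultimately sees.
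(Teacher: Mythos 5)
Your approach is correct but takes a genuinely different route from the paper's. The paper realizes an optimal plan $\gamma^{*}$ on an extended probability space as $\Law(X,Y)$, modifies $X$ so that $X\ind{Y\in G}=Y\ind{Y\in G}$ using Lemma~\ref{lem:10}, and then applies Lemma~\ref{lem:11}: the atomlessness of $\nu$ together with the fine structure of the dual maximizer $G$ (the sets $E^G$ and the parametrization of $\dom\Arg_G$ developed in Appendix~\ref{sec:dual-space-functions}) produce, for each $\epsilon>0$, a variable $Z$ with $\Law(Z)=\nu$, $|Z-Y|\leq\epsilon$, and $X$ being $Z$-measurable; transporting the resulting measurable map from $Z$ back to $Y$ and letting $\epsilon\downarrow 0$ closes the argument. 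You instead run a double discretization: finite refining Borel partitions $\{Q_i^n\}$ of the $x$-side and $\{R_j^n\}$ of the $y$-side, a Sierpi\'nski-type splitting of each $R_j^n$ (possible because $\nu$ is atomless) into pieces $\tilde B_{ij}^n$ of prescribed $\nu$-mass $\gamma^{*}(Q_i^n\times R_j^n)$, and the assignment $X_n=\cEP{Y}{Y\in B_i^n}$ on $B_i^n=\bigcup_j\tilde B_{ij}^n$. The crucial observation — assigning the $\nu$-barycenter on the \emph{realized} $y$-cell rather than any fixed representative of $Q_i^n$, so the martingale identity holds tautologically at each finite stage — is exactly right, and the martingale property of $\gamma^{*}$ is what guarantees $\tilde a_i^n$ stays close to the $\mu^{*}$-barycenter of $Q_i^n$. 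Your route avoids the paper's structure theory entirely (no $G$, no $\Arg_G$, no Lemmas~\ref{lem:10}, \ref{lem:11}, no appendix machinery); it needs only the existence of an optimal plan and atomlessness, which makes it more elementary and portable. The parts you leave as sketch — the $L^2$-martingale convergence on a refining sequence that generates $\mathcal{B}(\R^2)$ mod $\mu^{*}$, and especially the unbounded tail cells, where the pointwise ``$o(1)$ uniformly on compacts'' is insufficient and the tail contribution must be aggregated in $L^2(\mu^{*})$ via Cauchy--Schwarz after which it telescopes to $\int_{R_\infty^n}|y|^2\, d\nu\to 0$ — are correctly flagged and can be made rigorous, but they are not trivial and would need to be spelled out.
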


The proof of the theorem relies on some lemmas.

\begin{Lemma}
  \label{lem:10}
  Let $\nu\in \mathcal{P}_2(\R^2)$, $\gamma\in \Gamma(\nu)$ be an
  optimal plan for~\eqref{eq:5}, and $G\in \mathfrak{M}$ be a
  maximizer for~\eqref{eq:18}. If $\nu(G) > 0$, then the probability
  measure
  \begin{displaymath}
    \eta(dx,dy) = \frac1{\nu(G)} \ind{y\in G} \gamma(dx,dy) 
  \end{displaymath}
  has the martingale property: $\eta(y|x) = x$.
\end{Lemma}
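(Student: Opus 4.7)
The plan is to exploit the support characterization from Theorem~\ref{th:2}: on $\supp\gamma$ we have $c(x,y)=\phi_G(y)$ and $\supp\mu\subset G$, so for $(x,y)\in\supp\gamma$ with $y\in G$ both $x,y\in G$ and $(x_1-y_1)(x_2-y_2)=0$. Introducing the ``martingale defect''
\begin{displaymath}
  A(x):=\int(y-x)\ind{y\in G}\,\gamma(dy|x),
\end{displaymath}
the desired identity $\eta(y|x)=x$ reduces to $A(x)=0$ for $\mu$-a.e.\ $x\in\supp\mu$. The proof I envision proceeds by a case analysis on the position of $x$ in $G$, using the first-order necessary condition that $-\nabla_x c(x,y)=(y_2-x_2,y_1-x_1)$ lie in the (limiting) normal cone to $G$ at $x$, which is forced by $x\in\Arg_G(y)$ on $\supp\gamma$.

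When $x$ is a smooth point of $G$ (no vertical or horizontal segment and no kink), this normal cone is one-dimensional, so $\supp\gamma(\cdot|x)$ lies on a single line through $x$; the only $y\in G$ on this line is $x$ itself, so $\gamma(dy|x)\ind{y\in G}$ is concentrated at $y=x$ and $A(x)=0$. When $x$ lies in the interior of a vertical segment $V=\{s\}\times[a,b]$ of $G$ (the horizontal case is symmetric), differentiating the map $z_2\mapsto c((s,z_2),y)=(s-y_1)(z_2-y_2)$ along $V$ forces $y_1=s=x_1$; then $c(x,y)=0=\phi_G(y)$ together with the maximality of $V$ as a vertical segment of $G$ implies $y\in V\subset G$. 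Hence $\gamma(\ind{y\in G}|x)=1$ and $\eta(dy|x)=\gamma(dy|x)$ inherits the martingale property of $\gamma$, giving $A(x)=0$.

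The main obstacle will be the remaining exceptional set --- endpoints of segments, corners where a vertical and a horizontal segment meet, and kinks of $G$ --- where the tangent cone to $G$ at $x$ contains two distinct extremal rays and the normal cone is a genuine two-dimensional wedge. The key observation is that this wedge sits in a closed half-plane: because one of the extremal tangent rays is vertical (or horizontal), the polar condition forces a signed constraint on $\supp\gamma(\cdot|x)$, say $y_1\ge x_1$ everywhere on the support. Then the martingale identity $\int(y_1-x_1)\,\gamma(dy|x)=0$ combined with $y_1-x_1\ge 0$ a.s.\ yields $y_1=x_1$ a.s.; substituting back into the wedge inequality gives a one-sided bound on $y_2-x_2$, and a second application of the martingale forces $y_2=x_2$ a.s. Thus $\gamma(dy|x)=\delta_x$ at such $x$, and $A(x)=0$ trivially. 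Assembling the three cases yields $A=0$ $\mu$-a.s., hence the claim.
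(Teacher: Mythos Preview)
Your decomposition is organized by the local geometry of $G$ at $x$, whereas the paper slices by the position of $y$: it first observes that for $(x,y)\in\supp\gamma$ with $y\in G\setminus E_2$ one has $c(x,y)=0$ and hence $x_1=y_1$ (both $x$ and $y$ lie in $G$, so $x_2=y_2$ with $x\neq y$ would force a horizontal segment through $y$), leaving only the countably many horizontal strips $E_2(t)$; on each strip the paper then distinguishes $x\in\ri E_2(t)$ (where the full martingale of $\gamma$ applies directly) from the two endpoints $a(t),b(t)$ (where a sign bound on $y_1-x_1$ is combined with the martingale). Your $x$-based viewpoint is more geometric and, as a by-product, yields the stronger statement $\gamma(\cdot\mid x)=\delta_x$ at exceptional points, but requires more bookkeeping on the structure of $G$.

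There is one genuine gap. In Case~3 you assert that at every exceptional point ``one of the extremal tangent rays is vertical (or horizontal)'' and derive the half-plane constraint from that. This is false for a generic kink of $G$: take $G=\{(t,t):t\le 0\}\cup\{(t,2t):t\ge 0\}$, where both one-sided tangents at the origin have finite positive slope. Fortunately the half-plane conclusion survives by a different computation. Writing the tangent rays as $(1,m_+)$ and $(-1,-m_-)$ with $m_\pm>0$ and $m_+\neq m_-$, the Fr\'echet-normal (polar) inequalities read $n_1+m_+n_2\le 0$ and $n_1+m_-n_2\ge 0$ for $n=(y_2-x_2,\,y_1-x_1)$; subtracting gives $(m_+-m_-)(y_1-x_1)\le 0$, which is precisely the signed constraint you need, and the rest of your Case~3 argument then goes through unchanged. (An even shorter patch: at a generic kink no vertical or horizontal segment of $G$ passes through $x$, so the only $y\in G$ with $c(x,y)=0$ is $y=x$, and $A(x)=0$ follows immediately without any appeal to the martingale.)
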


\begin{proof}
  We write $\phi$ for $\phi_G$. We shall show that
  $\eta(y_1|x) = x_1$, that is, that
  \begin{equation}
    \label{eq:24}
    \int f(x) (y_1-x_1) d\eta = \frac1{\nu(G)} \int
    f(x) (y_1-x_1) \ind{y\in G} d\gamma = 0,    
  \end{equation}
  for every bounded Borel function $f$ on $\R^2$. The martingale
  property for the second coordinate has a similar proof.

  Let $E_2 = E_2 ^G = \cup_{t\in \mathcal{T}_2}E_2(t)$ be the union of
  the horizontal line segments of $G$.  If $(x,y)\in \supp{\gamma}$,
  then Theorem~\ref{th:2} yields that $x\in G$ and $c(x,y) =
  \phi(y)$. If, in addition, $y\in G\setminus E_2$, then
  $c(x,y)= \phi(y) = 0$ and, subsequently,
  $x_1=y_1$. Hence,~\eqref{eq:24} holds if
  \begin{equation}
    \label{eq:25}
    \int f(x) (y_1-x_1) \ind{y\in E_2(t)} d\gamma  = 0, \quad t\in
    \mathcal{T}_2. 
  \end{equation}

  Hereafter, we fix $t\in \mathcal{T}_2$. Let
  $(x,y)\in \supp{\gamma}$. If $y\in E_2(t)$, then
  $c(x,y) = \phi(y) = 0$ and thus, $x\in E_2(t)$. Conversely, if
  $x\in \ri{E_2(t)}$, the relative interior of $E_2(t)$, then
  Lemma~\ref{lem:19} yields that $y\in G$ and then, as
  $c(x,y) = \phi(y) = 0$, that $y\in E_2(t)$.  Hence,
  \begin{align*}
    \ind{y\in E_2(t)}
    &= \ind{x,y\in E_2(t)} \\
    &= \ind{x\in \ri{E_2(t)}} + \ind{x=a(t),y\in E_2(t)} +
      \ind{x=b(t),y\in E_2(t)},
  \end{align*}
  where $a(t)$ and $b(t)$ are the boundary points of $E_2(t)$ such
  that $a_1(t)<b_1(t)$.  Accounting for the martingale property of
  $\gamma$, we obtain that
  \begin{align*}
    \int f(x) (y_1-x_1) \ind{x\in \ri{E_2(t)}} d\gamma = 0.
  \end{align*}
  
  Let $y\in \R^2$ be such that $\phi(y) = c(b(t),y)$.  If
  $y\not\in G$, then Lemma~\ref{lem:19} yields that $b_1(t) > y_1$. If
  $y\in G\setminus E_2(t)$, then $c(b(t),y)= \phi(y)=0$ and thus,
  $b_1(t) = y_1$. Finally, if $y\in E_2(t)$, then $b_1(t) \geq
  y_1$. It follows that
  \begin{align*}
    \int \abs{x_1-y_1} \ind{x=b(t),y\in E_2(t)} d\gamma \leq \int
    (x_1-y_1) \ind{x=b(t)} d\gamma = 0,
  \end{align*}
  where at the last step we used the martingale property of
  $\gamma$. The case of the left boundary $a(t)$ is similar. We have
  proved~\eqref{eq:25}.
\end{proof}

\begin{Lemma}
  \label{lem:11}
  Let $G\in \mathfrak{M}$ and $X=(X_1,X_2)$ and $Y=(Y_1,Y_2)$ be
  random variables such that $X$ takes values in $G$,
  $X\ind{Y\in G} = Y\ind{Y\in G}$, and $c(X,Y) = \phi_G(Y)$. If the
  law of $Y$ is atomless, then for every $\epsilon>0$ there is a
  random variable $Z=Z(\epsilon)$ such that $\law{Z} = \law{Y}$,
  $\abs{Z - Y}\leq \epsilon$, and $X$ is $Z$-measurable.
\end{Lemma}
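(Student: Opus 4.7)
The plan is to set $Z=Y$ wherever $X$ is already $Y$-measurable and, on the residual ``ambiguous'' set, to perturb $Y$ within an $\epsilon$-neighborhood in a way that records $X$; the atomlessness of $\nu$ will provide the room to carry out the perturbation in a law-preserving manner.

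First I would apply a measurable selection (Kuratowski--Ryll-Nardzewski) to the closed-valued multifunction $y\mapsto \Arg_G(y)$ on $\{y\in \R^2 : \phi_G(y)>-\infty\}$, producing a Borel selector $x^*:\R^2\to\R^2$ with $x^*(y)\in \Arg_G(y)$. On the event $\{Y\in G\}$ the hypothesis $X=Y$ makes $X$ a Borel function of $Y$, and on $\{X=x^*(Y)\}$ the same is clearly true; on these events I would take $Z=Y$. The work is then confined to the residual event $N=\{Y\notin G,\ X\neq x^*(Y)\}$, where $X$ carries genuine extra randomness beyond $Y$.

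On $N$ I would construct $Z=h(Y,X)$ for a Borel map $h:\R^2\times\R^2\to\R^2$ with $|h(y,x)-y|\leq\epsilon$ and $X$ recoverable from $h(y,x)$. Fix a countable Borel partition $(C_k)$ of $\R^2$ into cells of diameter at most $\epsilon$. Each cell of positive $\nu$-mass carries an atomless measure $\nu|_{C_k}$, which by the isomorphism theorem for atomless standard Borel probability spaces is Borel-isomorphic to $([0,1],\mathrm{Leb})$; this gives enough room to Borel-carve $C_k$ into disjoint sub-cells whose $\nu$-masses match the conditional probabilities of the values of $X$ given $Y\in C_k$. Routing $(y,x)$ with $y\in C_k$ into the sub-cell designated by $x$ defines $h$ locally, and these local definitions patch together into a global Borel $h$. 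By construction $|Z-Y|\leq \epsilon$, $\Law(Z)=\nu$ because the rearrangement is measure-preserving on each cell, and $X$ becomes $Z$-measurable because the sub-cell of $Z$ determines $X$ up to null sets.

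The main obstacle is the measurable, law-preserving execution of the cellwise rearrangement when the conditional law of $X$ given $Y$ has a continuous component: then instead of a finite sub-cell decomposition one factors $C_k$ through $C_k\times [0,1]$ using the atomless isomorphism and encodes the continuous $X$-randomness in the $[0,1]$-factor. Atomlessness of $\nu$ is indispensable throughout: it is what guarantees that each cell of positive $\nu$-mass is isomorphic to an atomless standard space and so can absorb the $X$-information via an $\epsilon$-bounded Borel rearrangement.
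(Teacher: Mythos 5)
Your route differs substantially from the paper's. The paper exploits the specific geometry of $\Arg_G$: by Theorems~\ref{th:9} and~\ref{th:10} the exceptional $Y$-set $D=\dom{\Arg}\setminus(\dom{\nabla\phi}\cup G)$ is a countable union of points and strictly decreasing Lipschitz graphs of diameter $\le\epsilon$, and by Lemma~\ref{lem:29} on each such graph $X$ takes one of (at most) two Borel branches $g_1(Y),g_2(Y)$; this lets the paper reduce to a one-dimensional conditional-CDF splice to produce $Z$. You instead propose a purely measure-theoretic argument: cell decomposition of $\R^2$ into pieces of diameter $\le\epsilon$, plus the Borel isomorphism theorem for atomless standard probability spaces. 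That abstract route --- done carefully --- actually proves a strictly more general statement (it never uses $X\in G$, $c(X,Y)=\phi_G(Y)$, or the two-branches structure; only atomlessness of $Y$).

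There is, however, a genuine gap in your write-up. You set $Z=Y$ on $N^c=\{Y\in G\}\cup\{X=x^*(Y)\}$ and only rearrange on $N=\{Y\notin G,\,X\neq x^*(Y)\}$, then claim ``the sub-cell of $Z$ determines $X$.'' But $N$ is determined by the pair $(X,Y)$, not by $Y$ alone. For a fixed $y\notin G$ it may well happen that $\PP(X=x^*(y)\mid Y=y)$ and $\PP(X\neq x^*(y)\mid Y=y)$ are both positive; then on the cell $C_k\ni y$ the range of $Z=Y$ coming from $N^c$ can have positive-measure overlap with the range of the routed $Z$ coming from $N$, and a single value of $Z$ no longer identifies which regime you are in, hence does not determine $X$. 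In the paper this problem does not arise because its split between ``already $Y$-measurable'' and ``needs perturbing'' is $\{Y\in\dom{\nabla\phi}\cup G\}$ versus $\{Y\in D\}$: a split by a \emph{deterministic function of $Y$ alone}, so $Z\in D\iff Y\in D$ and there is no ambiguity.

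The fix is simple and in the spirit of your last paragraph: drop the $N/N^c$ dichotomy and apply the rearrangement uniformly on every cell. Concretely, work on $(\R^4,\gamma)$ with $\gamma=\Law(X,Y)$. For each $k$ with $\nu(C_k)>0$, both $(\{(x,y):y\in C_k\},\,\gamma(\cdot\mid y\in C_k))$ and $(C_k,\,\nu(\cdot\mid C_k))$ are atomless standard Borel probability spaces (the first because its $y$-marginal is $\nu(\cdot\mid C_k)$, which inherits atomlessness from $\nu$), hence Borel-isomorphic via some $\Phi_k$. Setting $Z=\Phi_k(X,Y)$ on $\{Y\in C_k\}$ one gets $\Law(Z)=\nu$, $\abs{Z-Y}\le\diam C_k\le\epsilon$, and $(X,Y)=\Phi_k^{-1}(Z)$ on $\{Z\in C_k\}=\{Y\in C_k\}$, so $X$ is $Z$-measurable. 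With that modification your argument is correct, and indeed more elementary in that it bypasses the structural analysis of $\Arg_G$; the trade-off is that the paper's version is entirely constructive, exhibiting $Z$ through explicit conditional quantile functions, while yours rests on the (nonconstructive) isomorphism theorem.
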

\begin{proof}
  We fix $\epsilon>0$ and denote $\phi = \phi_G$, $\Arg = \Arg_G$, and
  \begin{displaymath}
    D = \dom{\Arg}\setminus (\dom{\nabla \phi} \cup G). 
  \end{displaymath}
  Theorems~\ref{th:9} and~\ref{th:10} show that $D = \cup_n D_n$,
  where $D_n$ is either a point or the graph of a strictly decreasing
  function.  Of course, we can choose the sets $(D_n)$ so that
  \begin{displaymath}
    \diam{D_n} \set \sup_{x,y\in D_n} \abs{x-y} \leq \epsilon, \quad
    n\geq 1. 
  \end{displaymath}
  For every $n\geq 1$ we shall construct a two-dimensional random
  variable $Z^n=(Z^n_1,Z^n_2)$ and a Borel function
  $\map{f^n}{D_n}{G}$ such that
  \begin{equation}
    \label{eq:26}
    \begin{split}
      \law{Z^n \ind{Y\in D_n}} &= \law{Y \ind{Y\in D_n}}, \\
      X \ind{Y\in D_n} & = f^n(Z^n)\ind{Y\in D_n}.
    \end{split}
  \end{equation}
  Given the sequence of such pairs $(Z^n,f^n)$, $n\geq 1$, we define
  \begin{align*}
    Z &= Y \ind{Y\in \dom{\nabla \phi} \cup G}  + 
        \sum_n Z^n \ind{Y\in F_n}, \\
    f(y) &= y \ind{y \in G} + D^c\phi(y)  \ind{y \in
           \dom{\nabla\phi}\setminus G} + \sum_n 
           f^n(y)\ind{y\in F_n}.   
  \end{align*}
  where $F_n = D_n \setminus \cup_{k<n} D_k$.  We have that
  $\law{Z} = \law{Y} = \nu$ and $\abs{Z - Y} \leq \epsilon$. Moreover,
  in view of Theorem~\ref{th:9}, $X = f(Z)$.  Hence, \eqref{eq:26} is
  all we need to obtain.

  Using the conditional probabilities with respect to events
  $\braces{Y\in D_n}$, we can reduce the general case to the situation
  where
  \begin{displaymath}
    Y\in D = \descr{(t,h(t))}{t\in [t_0,t_1]},  
  \end{displaymath}
  for some strictly decreasing function $h=h(t)$. Since $\nu$ is
  atomless, every component $Y_i$ has a continuous distribution
  function $a_i(t) = \PP{Y_i\leq t}$, $i=1,2$. It follows that
  $U = a_1(Y_1)$ has the uniform distribution on $[0,1]$ and
  $Y_1 = a_1^{-1}(U)$, where $a_1^{-1}$ is the pseudo-inverse function
  to $a_1$:
  \begin{displaymath}
    a_1^{-1}(t) = \inf\descr{s\in [t_0,t_1]}{a_1(s) \geq t}, \quad t\in
    [0,1]. 
  \end{displaymath}
  In particular, $Y=(Y_1,h(Y_1))$ is $U$-measurable.
  
  Lemma~\ref{lem:29} yields Borel functions $\map{g_i}{D}{G}$,
  $i=1,2$, such that either $X=g_1(Y)$ or $X=g_2(Y)$.  As the
  functions
  \begin{align*}
    b_i(t) = \PP{U \leq t, X = g_i(Y)}, \quad t\in [0,1],  \; i=1,2, 
  \end{align*}
  are continuous and increasing, the random variable
  \begin{displaymath}
    V = b_1(U) \ind{X=g_1(Y)} + (b_1(1) + b_2(U)) \ind{X=g_2(Y)}
  \end{displaymath}
  has the uniform distribution on $[0,1]$. Clearly, $U$ and the
  indicators $(\ind{X=g_i(Y)})$ are $V$-measurable. It follows that
  $Y$ and $X$ are also $V$-measurable. Setting
  \begin{displaymath}
    Z_1 = a_1^{-1}(V), \quad Z_2 = h(Z_1),  
  \end{displaymath}
  we obtain that $Z=(Z_1,Z_2)$ has the same law as $Y$, that
  $V=a_1(Z_1)$, and that $X$ is $Z$-measurable.
\end{proof}

\begin{proof}[Proof of Theorem~\ref{th:3}]
  Let $\gamma$ be an optimal plan for~\eqref{eq:5}. By extending, if
  necessary, the underlying probability space we can assume that
  $\gamma = \law{X,Y}$ for some random variable $X$. As
  $\gamma(y|x) = x$, we have that $X = \cEP{Y}{X}$. Theorem~\ref{th:1}
  yields $G\in \mathfrak{M}$ such that $X\in G$ and
  $c(X,Y) = \phi_G(Y)$.

  We denote $\widetilde X = X \ind{Y\not\in G} + Y\ind{Y\in G}$ and
  observe that $\widetilde \gamma = \law{\widetilde X,Y}$ is another
  optimal plan. Indeed, by Lemma~\ref{lem:10},
  \begin{displaymath}
    \cEP{(Y-X) \ind{Y\in G}}{X} = 0
  \end{displaymath}
  and therefore, for a bounded Borel function $g=g(x)$ on $\R^2$,
  \begin{displaymath}
    \EP{(Y-\widetilde X)g(\widetilde X)} =  \EP{(Y-X)g(X) \ind{Y\not
        \in G}}= \EP{(Y-X) g(X)} = 0.
  \end{displaymath}
  It follows that $\cEP{Y}{\widetilde X} = \widetilde{X}$ and thus,
  $\widetilde \gamma \in \Gamma(\nu)$. By the construction of
  $\widetilde X$, we have that $c(X,Y) = c(\widetilde X,Y)$ and the
  optimality of $\widetilde \gamma$ follows.  This fact allows us to
  assume from the start that $X\ind{Y\in G} = Y \ind{Y\in G}$.  Then,
  $X$ and $Y$ satisfy the assumptions of Lemma~\ref{lem:11}.

  Let $\epsilon>0$ and $Z=Z(\epsilon)$ be the random variable yielded
  by Lemma~\ref{lem:11}. As $X$ is $Z$-measurable, the conditional
  expectation $V \set \cEP{Z}{X}$ is also $Z$-measurable. Thus, there
  is a Borel function $\map{f}{\R^2}{\R^2}$ such that $V = f(Z)$.
  Since $Y$ and $Z$ have identical laws, $U \set f(Y) =
  \cEP{Y}{U}$. As
  \begin{displaymath}
    \abs{V_i - X_i} = \abs{\cEP{Z_i - Y_i}{X}} \leq 
    \cEP{\abs{Z-Y}}{X} \leq 
    \epsilon, \quad i=1,2,   
  \end{displaymath}
  we deduce that
  \begin{align*}
    \EP{c(U,Y)} & = \EP{c(V, Z)}= \EP{Z_1Z_2} - \EP{V_1 V_2} \\
                &\leq \EP{Y_1 Y_2} - \EP{X_1 X_2} + \epsilon
                  \EP{\abs{X_1}+\abs{V_2}} \\
                &\leq \EP{c(X,Y)} + \epsilon \EP{\abs{Y_1}+\abs{Y_2}}.
  \end{align*}
  The result follows, because $\epsilon$ is any positive number.
\end{proof}

Let $\mathcal{D}$ be the family of graphs of strictly decreasing
functions $f=f(t)$ defined on closed intervals of $\R$ such that both
$f$ and its inverse $f^{-1}$ are Lipschitz functions:
\begin{displaymath}
  \frac1K (t-s) \leq f(s) - f(t) \leq K(t-s), \quad s<t,  
\end{displaymath}
for some constant $K=K(f)>0$.  To make statements shorter we allow for
a degenerate case where the domain of $f$ is just a point. Thus,
$\R^2\subset \mathcal{D}$.

\begin{Definition}
  \label{def:1}
  A Borel probability measure $\mu$ on $\R^2$ is
  \emph{$\mathcal{D}$-regular} if $\mu(D)=0$, $D\in \mathcal{D}$.
\end{Definition}

The following theorem establishes the existence of optimal maps that
induce optimal plans.

\begin{Theorem}
  \label{th:4}
  Let $Y=(Y_1,Y_2)\in \mathcal{L}^2$ and suppose that $\nu = \law{Y}$
  is $\mathcal{D}$-regular. Let $G\in \mathfrak{M}$ be a maximizer
  for~\eqref{eq:18} and denote $\phi = \phi_G$ and $E=E^G$. Then
  \begin{displaymath}
    X = Y\ind{Y\in E} + D^c\phi(Y)\ind{Y\not\in E}
  \end{displaymath}  
  is an optimal map for~\eqref{eq:21}, $\gamma = \law{X,Y}$ is an
  optimal plan for~\eqref{eq:5}, and the law of $X$ is
  $\mathcal{D}$-regular. Moreover, if $\widetilde X$ is an optimal map
  and $\widetilde \gamma$ is an optimal plan, then
  \begin{align}
    \label{eq:27}
    X\ind{Y\not\in E} & = \widetilde{X} \ind{Y\not\in E},\\
    \label{eq:28}
    \ind{y\not\in E} \gamma(dx,dy) & = \ind{y\not\in E}
                                     \widetilde\gamma(dx,dy).
  \end{align}
\end{Theorem}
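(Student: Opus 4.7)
The plan is to build the candidate optimal map $X$ directly from the dual optimizer $G$, verify that the induced plan $\gamma=\law{X,Y}$ lies in $\Gamma(\nu)$ and saturates the duality bound from Theorem~\ref{th:2}, and then derive uniqueness and $\mathcal{D}$-regularity of $\law{X}$ by combining the argmin structure on $\{Y\not\in E\}$ with the swap argument used in the proof of Theorem~\ref{th:3}.

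First I would check that $X$ is well-defined $\nu$-almost surely. Since $\int\phi\,d\nu$ is the finite minimum value, $\phi(Y)>-\infty$ $\nu$-a.s., so $Y\in\dom\Arg_G$ $\nu$-a.s. The structural results from Appendix~\ref{sec:dual-space-functions} (Theorems~\ref{th:9} and~\ref{th:10}, already used in the proofs of Lemma~\ref{lem:11} and Theorem~\ref{th:3}) decompose $\dom\Arg_G\setminus(\dom\nabla\phi\cup G)$ into a countable union of points and graphs of strictly decreasing Lipschitz functions, which therefore has $\nu$-measure zero by $\mathcal{D}$-regularity. These results also identify $D^c\phi(y)$ with the unique minimizer of $c(\cdot,y)$ on $G$ at every $y\in\dom\nabla\phi\setminus E$; in particular, $D^c\phi(y)\in G$ and $c(D^c\phi(y),y)=\phi(y)$. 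Moreover, at $y\in G\setminus E$ the unique argmin of $c(\cdot,y)$ on $G$ is $y$ itself, so $D^c\phi(y)=y$; combining the cases gives $X\in G$ and $c(X,Y)=\phi(Y)$ a.s., with $X=Y$ on $\{Y\in G\}$.

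For the martingale property, I would start from an optimal plan $\gamma_0=\law{X_0,Y}$ produced by Lemma~\ref{lem:2} (extending the underlying probability space if needed). By Theorem~\ref{th:2}, $c(X_0,Y)=\phi(Y)$ and $X_0\in G$ almost surely. Imitating the swap in the proof of Theorem~\ref{th:3} and invoking Lemma~\ref{lem:10}, the modified random variable $\widehat{X}=X_0\ind{Y\not\in G}+Y\ind{Y\in G}$ again induces an optimal martingale plan. On $\{Y\not\in E\}$ the uniqueness of the argmin on $G$ forces $\widehat{X}=D^c\phi(Y)$, while on $\{Y\in E\}$ we have $\widehat{X}=Y$ by construction; this identifies $\widehat{X}$ with the $X$ of the theorem. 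Consequently $\cEP{Y}{X}=X$ and $\gamma=\law{X,Y}\in\Gamma(\nu)$, and the identity $\EP{c(X,Y)}=\EP{\phi(Y)}=\int\phi\,d\nu$ combined with Theorem~\ref{th:2} yields optimality of $X$ for~\eqref{eq:21} and of $\gamma$ for~\eqref{eq:5}.

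For the uniqueness statements, let $\widetilde{\gamma}$ be any optimal plan. By Theorem~\ref{th:2}, $c(x,y)=\phi(y)$ on $\supp\widetilde{\gamma}$, and on $\{y\not\in E\}$ the argmin $D^c\phi(y)$ is unique, so the conditional kernel $\widetilde{\gamma}(dx|y)$ is the Dirac mass at $D^c\phi(y)$ for $\nu$-a.e.\ such $y$; this gives~\eqref{eq:28}, and applying it to $\widetilde{\gamma}=\law{\widetilde{X},Y}$ delivers~\eqref{eq:27}. To show $\mu=\law{X}$ is $\mathcal{D}$-regular I would use that $\mu$ is supported on the maximal monotone set $G$ and that any $D\in\mathcal{D}$ meets $G$ in at most one point (a strictly decreasing Lipschitz graph cannot share two monotone-related points), so $\mu(D)\le\sup_{p\in\R^2}\mu(\{p\})$ and it suffices to verify $\mu$ has no atoms. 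For fixed $p$, an atom $\mu(\{p\})>0$ would yield a conditional kernel $\gamma(dy|x=p)$ supported on the normal set $N(p)=\descr{y\in\R^2}{p\in\Arg_G(y)}$ with barycenter $p$; at smooth points of $G$ the set $N(p)$ is a strictly decreasing Lipschitz segment through $p$ and hence $\nu$-null by $\mathcal{D}$-regularity, while at corner points of $G$ the set $N(p)$ is a closed convex cone with vertex $p$, so the barycenter condition concentrates the kernel at $p$ and forces $\mu(\{p\})\le\nu(\{p\})=0$. I expect the structural analysis of $N(p)$ and the extreme-point reduction of the corner case to be the main technical work.
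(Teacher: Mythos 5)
Your construction of $X$, the well-definedness argument via Theorems~\ref{th:9} and~\ref{th:10}, and the derivation of the uniqueness statements~\eqref{eq:27}--\eqref{eq:28} from the single-valuedness of $\Arg_G$ off $E$ all match the paper. For the martingale property you organize things slightly differently: rather than verifying $\gamma(y|x)=x$ directly (as the paper does, writing $\int(y-x)f(x)\,d\gamma$ in terms of $\widetilde\gamma$ on $\{y\notin G\}$ and invoking Lemma~\ref{lem:10}), you recycle the swap construction from the proof of Theorem~\ref{th:3} to manufacture an optimal $\widehat X$ and then identify $\widehat X=X$ a.s.\ using the uniqueness of the argmin off $E$. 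This is a valid alternative, ultimately resting on the same ingredients (Lemma~\ref{lem:10} and~\eqref{eq:28}), and is fine.

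There is, however, a genuine gap in the argument that $\law{X}$ is $\mathcal{D}$-regular. You correctly reduce to atomlessness of $\mu$, identify the conditional kernel $\eta(dy)=\gamma(dy\,|\,x=p)$ as supported on $N(p)=\descr{y}{p\in\Arg_G(y)}$ with $\eta(G)=0$ and barycenter $p$, and you recognize that the structure of $N(p)$ is the crux. But your case split goes wrong at ``corner'' points: your claim that $N(p)$ is ``a closed convex cone with vertex $p$'' and that ``the barycenter condition concentrates the kernel at $p$'' is false precisely in the case that matters. A convex cone with vertex $p$ can be a whole line through $p$, and a measure on a line with barycenter $p$ certainly need not concentrate at $p$ (e.g.\ $\tfrac12\delta_{p+v}+\tfrac12\delta_{p-v}$). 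The barycenter-forces-a-Dirac step is only valid for a pointed cone, i.e.\ when $N(p)\setminus G$ lies entirely in one sector; but in that situation the barycenter condition already cannot be met at all ($\int y_1\,d\eta\ne p_1$), so one never reaches that case. What actually happens when $N(p)\setminus G$ has points on both sides of $G$ is captured by Lemma~\ref{lem:20}: the concave tangent hyperbola from above and the convex one from below agree to first order at $p$, so any $y^0$ above and $y^1$ below with $p\in\Arg_G(y^0)\cap\Arg_G(y^1)$ are collinear with $p$. This forces $N(p)$ onto a single strictly decreasing line through $p$, at \emph{every} $p\in G$, corners included; the $\mathcal{D}$-regularity of $\nu$ then kills $\nu(N(p))$, and $\mu(\braces{p})\le\nu(N(p))=0$ finishes the proof. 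So the smooth/corner dichotomy you set up is unnecessary, and your corner branch, as written, does not close. The missing ingredient is Lemma~\ref{lem:20} (or an equivalent convex/concave tangency argument); without it, the atomlessness of $\mu$ is not established.
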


\begin{proof}
  Let $\widetilde\gamma\in \Gamma(\nu)$ be an optimal plan.  From
  Theorem~\ref{th:2} we deduce that if
  $(x,y)\in \supp{\widetilde{\gamma}}$, then
  $y\in \dom{\Arg} = \dom{\Arg_G}$.  In particular,
  $\nu(\dom{\Arg}) = 1$. By Theorem~\ref{th:10}, the exception set
  $\dom{\Arg}\setminus \dom{\nabla \phi}$ belongs to the union of
  $E=E^G$ and of a countable family of sets from $\mathcal{D}$. Since
  $\nu = \law{Y}$ is $\mathcal{D}$-regular, we have that
  \begin{equation}
    \label{eq:29}
    \nu(\dom{\nabla \phi}\setminus E) = \nu(\dom{\Arg}\setminus E) = 1 - \nu(E). 
  \end{equation}
  It follows that the random variable $X = (X_1,X_2)$ is well-defined.

  Theorem~\ref{th:9} shows that if
  $y\in \dom{\nabla \phi} \setminus E$, then $D^c \phi(y)$ is the only
  element of $\Arg(y)$.  It follows that $X\in G$ and
  $\phi(Y) = c(X,Y)$.  In view of Theorem~\ref{th:2},
  $\gamma\set \law{X,Y}$ is an optimal plan if it has the martingale
  property: $\gamma(y|x)= x$.
 
  If $(x,y) \in \supp{\widetilde \gamma}$ and
  $y\in \dom{\nabla \phi} \setminus E$, then Theorems~\ref{th:2}
  and~\ref{th:9} yield that $x=D^c\phi(y)$. Since $\gamma$ and
  $\widetilde \gamma$ have common $y$-marginal $\nu$
  satisfying~\eqref{eq:29}, they coincide outside of $\R^2\times E$,
  that is, \eqref{eq:28} holds.

  Let $f=f(x)$ be a bounded Borel function on $\R^2$. As
  $(Y-X)\ind{Y\in G} = 0$, we deduce that
  \begin{align*}
    \int (y-x) f(x) d\gamma &= \int (y-x) f(x) \ind{y\not\in G}
                              d\gamma = \int (y-x) f(x) \ind{y\not\in G}
                              d\widetilde \gamma \\
                            &= -\int (y-x) f(x) \ind{y\in G} d\widetilde \gamma=0,
  \end{align*}
  where the last two equalities follow from the martingale property of
  $\widetilde \gamma$ and Lemma~\ref{lem:10}, respectively. Thus,
  $\gamma(y|x) = x$. We have proved that $\gamma$ is an optimal plan
  and, in particular, that $X$ is an optimal map. The uniqueness
  property~\eqref{eq:27} for optimal maps follows directly from the
  corresponding property~\eqref{eq:28} for optimal plans.
 
  It only remains to be shown that $\mu \set \law{X}$ is
  $\mathcal{D}$-regular. As $\supp{\mu}\subset G$ and the intersection
  of $G$ with any set from $\mathcal{D}$ is a point, $\mu$ is
  $\mathcal{D}$-regular if and only if it is atomless. Assume to the
  contrary, that $\mu(\braces{r})>0$ for some $r\in G$ and define the
  Borel probability measure
  \begin{displaymath}
    \eta(dy) = \frac1{\mu(\braces{r})} \gamma(\braces{r},dy).
  \end{displaymath}
  Being $\mathcal{D}$-regular, the measure $\nu$ is atomless. Hence,
  \begin{align*}
    \mu(\braces{r})\eta(G) &=\gamma(\braces{r}\times G) = \PP{X=r,
                             Y\in G} \\
                           &= \PP{Y=r} = \nu(\braces{r}) = 0.
  \end{align*}
  From the optimality of $\gamma$ we deduce that
  \begin{displaymath}
    \supp{\eta} \subset D(r) \set \descr{y\in \R^2}{\phi(y) =
      c(r,y)} 
  \end{displaymath}
  and then that $\eta(D(r)\setminus G) = 1>0$. The martingale property
  of $\gamma$ yields that $\int y d\eta = r$. The last two properties
  of $\eta$ and the fact that $\phi<0$ outside of $G$ imply the
  existence of $y^0,y^1\in D(r)\setminus G$ such that
  \begin{displaymath}
    y^0_1 < r_1 < y^1_1, \quad y^1_2 < r_2 < y^0_2. 
  \end{displaymath}
  By Lemma~\ref{lem:20}, $D(r)$ belongs to the graph of a strictly
  decreasing linear function and thus, belongs to $\mathcal{D}$. As
  $\nu$ is $\mathcal{D}$-regular, we arrive to a contradiction:
  $\mu(\braces{r}) = \gamma(\braces{r}\times D(r)) \leq \nu(D(r)) =
  0$.
\end{proof}

We now state the main uniqueness result of the paper, which can be
viewed as an adaptation of the classical Brenier theorem to our
setting.  We point out that our regularity assumption on
$\nu$ is weaker than the standard condition of the Brenier theorem,
which requires $\nu$ to assign zero mass to rotations of the graphs of
Lipschitz functions.

\begin{Theorem}
  \label{th:5}
  Let $Y=(Y_1,Y_2)\in \mathcal{L}^2$ and suppose that $\nu = \law{Y}$
  is $\mathcal{D}$-regular and the (one-dimensional) laws of $Y_1$ and
  $Y_2$ are atomless. Let $G\in \mathfrak{M}$ be a maximizer
  for~\eqref{eq:18} and denote $\phi = \phi_G$. Then $X=D^c\phi(Y)$
  or, in more detail,
  \begin{align*}
    X_1 &= D^c_1\phi(Y)
          = Y_1 - \frac{\partial \phi}{\partial y_2}(Y), \\
    X_2 &= D^c_2\phi(Y) =Y_2 - \frac{\partial \phi}{\partial y_1}(Y),
  \end{align*}
  is the unique optimal map for~\eqref{eq:21} and the law of $(X,Y)$
  is the unique optimal plan for~\eqref{eq:5}. Moreover, the law of
  $X$ is $\mathcal{D}$-regular and the laws of $X_1$ and $X_2$ are
  atomless.
\end{Theorem}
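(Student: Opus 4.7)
My plan is to bootstrap from Theorem~\ref{th:4}, which already produces an optimal map $X$ and optimal plan $\gamma=\Law(X,Y)$ of the advertised form and delivers the uniqueness statements~\eqref{eq:27} and~\eqref{eq:28} away from the exceptional set $E=E^G$. Under the additional atomlessness of $Y_1$ and $Y_2$, I expect $\nu(E)=0$, which immediately upgrades Theorem~\ref{th:4} to the full conclusion.

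To prove $\nu(E)=0$, observe that by~\eqref{eq:23} one has $E_i^G\subset\{y\in\R^2:y_i\in\mathcal{T}_i^G\}$, and each $\mathcal{T}_i^G$ is countable, since a maximal monotone graph in $\R^2$ has at most countably many nondegenerate vertical or horizontal segments. Atomlessness of $Y_i$ then gives $\nu(E_i^G)\le\PP{Y_i\in\mathcal{T}_i^G}=0$, so $\nu(E)=0$. Any other optimal plan $\widetilde\gamma$ shares the $y$-marginal $\nu$, so the restrictions of $\gamma$ and $\widetilde\gamma$ to $\R^2\times E$ are both null; then~\eqref{eq:28} yields $\gamma=\widetilde\gamma$, and~\eqref{eq:27} yields $X=\widetilde X$ a.s. Since $\nu(E)=0$, the formula of Theorem~\ref{th:4} simplifies to $X=D^c\phi(Y)$ a.s., and the $\mathcal{D}$-regularity of $\Law(X)$ is inherited from Theorem~\ref{th:4}. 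In particular, $\Law(X)$ is atomless, because singletons belong to $\mathcal{D}$.

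It remains to show that $\Law(X_1)$ and $\Law(X_2)$ are atomless; by symmetry I treat $X_1$, and the main obstacle is to convert a hypothetical atom of $X_1$ into rigid support information on $Y$. Suppose, toward a contradiction, that $\PP{X_1=a}>0$. Since $\Law(X)$ is atomless and $X\in G$ a.s., the slice $\{t:(a,t)\in G\}$ cannot reduce to a single point, so $a\in\mathcal{T}_1^G$ and $E_1^G(a)=\{a\}\times[b_-,b_+]$ with $b_-<b_+$; discarding the two null singletons $\{(a,b_\pm)\}$ gives $\PP{X\in\ri E_1^G(a)}=\PP{X_1=a}>0$. I would then invoke Lemma~\ref{lem:19} (as used in the proof of Lemma~\ref{lem:10}): if $(x,y)\in\supp\gamma$ and $x\in\ri E_1^G(a)$, then $y\in G$. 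On this event $\phi(Y)=0$, so $c(X,Y)=(a-Y_1)(X_2-Y_2)=0$, which forces $Y_1=a$ or $Y_2=X_2$. The subevent $\{Y_1=a\}$ is null by atomlessness of $Y_1$; on its complement $B$ of positive probability we have $Y\in G$, $Y_2=X_2\in(b_-,b_+)$, and $Y_1\ne a$, so $(a,X_2)$ and $(Y_1,X_2)$ are two distinct points of $G$ with a common second coordinate. This forces $X_2\in\mathcal{T}_2^G$, so $\PP{Y_2\in\mathcal{T}_2^G}\ge\PP{Y_2=X_2\in\mathcal{T}_2^G}\ge\PP{B}>0$, contradicting atomlessness of $Y_2$ on the countable set $\mathcal{T}_2^G$.
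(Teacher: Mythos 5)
Your proposal is correct, and the first half — proving $\nu(E)=0$ from the countability of $\mathcal{T}_i^G$ and the atomlessness of $Y_1,Y_2$, then reading off everything but the marginal atomlessness of $X$ from Theorem~\ref{th:4} — is essentially identical to the paper's. Where you genuinely diverge is in the final step showing $\mathbb{P}(X_1 = a)=0$. The paper (treating $X_2$ and $E_2(t)$ for definiteness) deduces from Lemma~\ref{lem:19} that $Y\in G$ and $c(X,Y)=0$, and then uses a monotonicity/relative-interior argument to rule out the case $X_1=Y_1,\ X_2\ne Y_2$ outright, forcing $Y\in E_2(t)$ and hence $Y_2=t$; this gives $\mathbb{P}(X_2=t)\le\mathbb{P}(Y_2=t)=0$ using only the atomlessness of the matching coordinate. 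You instead split $c(X,Y)=(a-Y_1)(X_2-Y_2)=0$ into the subevent $\{Y_1=a\}$, which you kill with atomlessness of $Y_1$, and the subevent $\{Y_2=X_2,\ Y_1\ne a\}$, in which $(a,X_2)$ and $(Y_1,X_2)$ are distinct points of $G$ sharing a second coordinate, so $X_2\in\mathcal{T}_2^G$ and hence $\mathbb{P}(Y_2\in\mathcal{T}_2^G)>0$ — a contradiction with the atomlessness of $Y_2$. Both routes are valid. Yours trades the paper's geometric monotonicity step for a second appeal to countability of $\mathcal{T}_i^G$, and as a result it invokes the atomlessness of both $Y_1$ and $Y_2$ to rule out an atom of a single coordinate $X_1$, whereas the paper needs the atomlessness of only the matching coordinate. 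The argument is slightly more roundabout (the second case you consider is in fact empty by monotonicity, though you need not notice this) but logically complete.
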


\begin{proof}
  We omit $G$ from the notations~\eqref{eq:23} related to its vertical
  and horizontal line segments.  As the law of $Y_i$ is atomless and
  the set $\mathcal{T}_i$ is countable, we deduce that
  \begin{displaymath}
    \nu(E) = \PP{Y\in E} \leq \sum_{i=1}^2  \PP{Y \in E_i} =  \sum_{i=1}^2
    \PP{Y_i \in \mathcal{T}_i} = 0. 
  \end{displaymath}
  Except the continuity of the distribution functions for $X_1$ and
  $X_2$, all other assertions follow directly from Theorem~\ref{th:4}.

  We shall prove that the law of $X_2$ is atomless.  If
  $t\not\in \mathcal{T}_2$, then the set ${E}_2(t)$ is a singleton:
  $E_2(t) = \braces{z}$.  By Theorem~\ref{th:4}, the law of $X$ is
  $\mathcal{D}$-regular and, in particular, atomless. It follows that
  $\PP{X_2 = t} = \PP{X=z} = 0$.

  Let $t\in \mathcal{T}_2$.  Lemma~\ref{lem:19} shows that if
  $x\in \ri{E_2(t)}$ and $\phi(y) = c(x,y)$, then $c(x,y)=0$ and
  subsequently, $y\in E_2(t)$.  As $X\in G$, $\phi(Y) = c(X,Y)$, and
  the law of $X$ is atomless, we obtain that
  \begin{align*}
    \PP{X_2 = t} &= \PP{X \in E_2(t)} = \PP{X\in \ri{E_2(t)}} \\
                 &\leq \PP{Y\in E_2(t)} = \PP{Y_2 = t} = 0,
  \end{align*}
  where the last step holds by the continuity of the law of $Y_2$.
\end{proof}

\section{Examples}
\label{sec:examples}

\begin{Example}[Linear optimal map]
  \label{ex:1}
  Let $Y=(Y_1, Y_2)$ be a random variable in $\mathcal{L}^2$ such that
  $\EP{Y_i}=0$, $\EP{Y_i^2} = \sigma_i^2>0$, and
  \begin{displaymath}
    \cEP{Y_i}{Z}= \frac{\EP{Y_iZ}}{\EP{Z^2}} Z \quad\text{for all} \quad 
    Z=a_1Y_1 + a_2Y_2, \; a_j\in \R.   
  \end{displaymath}
  The latter property holds if the distribution of $Y$ is Gaussian or,
  more generally, elliptically contoured.
  
  We denote $\lambda = \frac{\sigma_2}{\sigma_1} > 0$ and define
  $G=\descr{x\in \R^2}{x_2=\lambda x_1}$ and
  \begin{align*}
    X_1 = \frac{1}{2}Y_1 +  \frac{1}{2\lambda} Y_2, \quad X_2 =
    \lambda X_1 = 
    \frac{\lambda}{2}Y_1 +  \frac{1}{2} Y_2. 
  \end{align*}
  Elementary computations show that $\cEP{Y}{X} = X = (X_1,X_2)$ and
  \begin{displaymath}
    \phi_G(Y) = \inf_{x\in G}c(x,Y) = \inf_{x_1\in \R}(Y_1 - x_1)(Y_2 -
    \lambda x_1) = c(X,Y). 
  \end{displaymath}
  Being the graph of an increasing linear function,
  $G\in \mathfrak{M}$.  Setting $\nu = \law{Y}$, we deduce from
  Theorem~\ref{th:2} that $G$ and $\gamma = \law{X,Y}$ are respective
  solutions to~\eqref{eq:18} and~\eqref{eq:5}.  Moreover, as $X$ is
  the only element of $G$ such that $\phi_G(Y) = c(X,Y)$, the
  characteristic property~\eqref{eq:20} yields that $\gamma$ is the
  unique optimal plan. In particular, $X$ is the unique optimal map
  for~\eqref{eq:21}.
\end{Example}

\begin{Example}[Optimal map may not yield optimal plan]
  \label{ex:2}
  Let $Y$ be a random variable taking values in $y^0=(-1,1)$,
  $y^1=(0,-1)$, and $y^2=(1,0)$ with probability $\frac13$.  Direct
  computations show that the points
  \begin{displaymath}
    z^i = \frac{1}{3} y^0 + \frac{2}{3}y^i = (-1)^{i}
    \big(\frac13,\frac13\big), \quad i=1,2, 
  \end{displaymath}
  belong to the set
  $G = \descr{x\in \R^2}{c(x,y^0) = - \frac89,\; x_1 >y_1^0}$, that
  \begin{displaymath}
    \phi_G(y^i) = \min_{x\in G} c(y^i,x) = c(y^i,z^i), \quad i=1,2,  
  \end{displaymath}
  and that the probability measure
  \begin{displaymath}
    \gamma =  \sum_{i=1}^2 \left(\frac{1}{3} \delta_{(z^i,y^i)} 
      + \frac{1}{6}\delta_{(z^i, y^0)}\right)
  \end{displaymath}
  belongs to $\Gamma(\nu)$, where $\nu = \law{Y}$. Being the graph of
  an increasing hyperbola, $G\in \mathfrak{M}$. By Theorem~\ref{th:1},
  $\gamma$ is an optimal plan for~\eqref{eq:5}. The value of this
  problem is
  \begin{displaymath}
    \int c(x,y) d\gamma = \sum_{i=1}^2 \left(\frac{1}{3} c(z^i,y^i)  +
      \frac{1}{6} c(z^i,y^0)\right) = - \frac{4}{9}.
  \end{displaymath}
    
  On the other hand, let $X\in \mathcal{X}(Y)$, that is, $X$ is
  $Y$-measurable and $X=\cEP{Y}{X}$. We write $x^i = X(y^i)$,
  $i=1,2,3$. If all $(x^i)$ are distinct, then $X=Y$ and $c(X,Y)=0$.
  If they are the same point, then $X=\EP{Y} = 0$ and
  \begin{displaymath}
    \EP{c(X,Y)} = \frac13 \sum_{i=0}^2 c(0,y^i) = \frac13 \sum_{i=0}^2
    y^i_1 y^i_2 = - \frac13.  
  \end{displaymath}
  Finally, if precisely two of the elements of $(x^i)$ coincide:
  $x^{k} = x^{l} \not = x^{m}$, where $(k,l,m)$ is a permutation of
  $(0,1,2)$, then $x^{k}=x^{l}=\frac{1}{2}(y^{k}+y^{l})$,
  $x^{m} = y^{m}$, and
  \begin{align*}
    \EP{c(X,Y)} = \frac13 \left(c(y^{k},\frac12 (y^{k} + y^{l}))
    + c(y^{l},\frac12 (y^{k} + y^{l}))\right)
    = \frac16 c(y^{k},y^{l}). 
  \end{align*}
  As $c(y^0,y^1) = c(y^0,y^2) = -2$ and $c(y^1,y^2) = 1$, the value
  function of the optimal map problem~\eqref{eq:21} is given by
  $-\frac{1}{3}$, which is strictly less than $-\frac{4}{9}$, the
  value of the optimal plan problem~\eqref{eq:5}.
\end{Example}

\begin{Example}[Optimal map may not exist]
  \label{ex:3}
  Let $U$ and $V$ be independent symmetric random variables in
  $\mathcal{L}^2$ with $U$ having a continuous distribution function
  and $V$ taking values in $\braces{-1,1}$.  We define a 2-dimensional
  random variable
  \begin{displaymath}
    Y = \big(\frac{U}{3}(1-2V),U(1+2V)\big)\ind{U<0} + \big(U(1+2V),
    \frac{U}{3}(1-2V)\big)\ind{U\geq 0}. 
  \end{displaymath}
  The components $Y_1$ and $Y_2$ have continuous distribution
  functions and, in particular, $\nu=\law{Y}$ is atomless. By
  Theorem~\ref{th:3}, the plan and map problems~\eqref{eq:5}
  and~\eqref{eq:21} have identical values. We shall prove that there
  is a unique optimal plan, which is not induced by a ($Y$-measurable
  martingale) map, and hence, shall show that an optimal map does not
  exist.

  To this end, we define a 2-dimensional random variable
  \begin{align*}
    X =  (\frac{1}{3}U,U)\ind{U < 0} + (U,\frac{1}{3}U) \ind{U\geq 0}. 
  \end{align*}
  We observe that $X$ takes values in the set
  \begin{displaymath}
    G = \braces{x_2 = 3x_1, \; x_1<0} \cup \braces{x_2 =\frac13
      x_1, \; x_1 \geq 0}  
  \end{displaymath}
  consisting of two upward-slopping lines and thus, belonging to
  $\mathfrak{M}$. Direct computations show that $\cEP{Y}{X} = X$ and
  \begin{displaymath}
    c(X,Y) = \phi_G(Y) \set \inf_{x\in G}c(x,Y). 
  \end{displaymath}  
  By Theorem~\ref{th:1}, the law of $(X,Y)$ is an optimal plan and $G$
  is a dual maximizer.  We shall proceed to show that this is the only
  optimal plan and that it is not induced by a map from
  $\mathcal{X}(Y)$.
   
  From the construction of $Y$ we deduce the equality of the sets:
  \begin{displaymath}
    \braces{V = -1} = \braces{Y_1 = -Y_2} = \braces{Y =
      (-\abs{U},\abs{U})}.  
  \end{displaymath}
  It follows that
  \begin{align*}
    \cEP{X}{Y}\ind{Y_1=-Y_2} &= \cEP{X}{\abs{U}}\ind{Y_1=-Y_2} =
                               \frac13 (\abs{U},-\abs{U})
                               \ind{Y_1=-Y_2} \\
                             & = -\frac13 Y \ind{Y_1=-Y_2} \not= X\ind{Y_1=-Y_2}.
  \end{align*}
  Hence, $X$ is not $Y$-measurable.

  Let $\gamma \in \Gamma(\nu)$ be an optimal plan and $\mu$ be its
  $x$-marginal.  By Theorem~\ref{th:2}, $\supp{\mu} \subset G$ and
  \begin{displaymath}
    c(x,y) = \phi(y), \quad (x,y) \in \supp{\gamma}. 
  \end{displaymath}
  The random variable $Y$ takes values in $F= F_1\cup F_2$, where
  \begin{align*}
    F_1 &= \braces{y_2 = -y_1, \; y_1< 0}, \\
    F_2 &= \braces{y_2 = -9 y_1 \text{ or } y_2 = -\frac19 y_1, \; y_1 \geq 0}. 
  \end{align*}
  Elementary computations show that for $x\in G$ the set of $y\in F$
  such that $c(x,y) = \phi_G(y)$ consists of two points $g(x)$ and
  $f(x)$ such that
  \begin{align*}
    f(x) &= (-x_1,9x_1)\ind{x_1<0} + (3x_1,-\frac13x_1) \ind{x_1\geq
           0}, \\ 
    g(x) &= (3x_1,-3x_1) \ind{x_1 < 0} + (-x_1,x_1)\ind{x_1 \geq 0}.  
  \end{align*}
  For $x\not=0$, $x\in G$, the three points $\braces{g(x),x,f(x)}$ are
  distinct and
  \begin{displaymath}
    x = \frac12(g(x) + f(x)).   
  \end{displaymath}
  On the other hand, by the martingale property of $\gamma$ and the
  fact that $\nu(\braces{0})=0$, we have that
  \begin{displaymath}
    x = \gamma(y|x) = f(x) \gamma(y=f(x)|x) + g(x) \gamma(y=g(x)|x),
    \quad \gamma\text{-a.s.},  
  \end{displaymath}
  and therefore, the conditional probabilities
  \begin{displaymath}
    \gamma(y=f(x)|x) = \gamma(y=g(x)|x) = \frac12, \quad
    \gamma\text{-a.s.}.   
  \end{displaymath}
  For a bounded Borel function $h=h(x,y)$ on $\R^2\times \R^2$ we then
  obtain that
  \begin{align*}
    \int h(x,y) d\gamma &= \int (h(x,g(x))\ind{y=g(x)} +
                          h(x,f(x))\ind{y=f(x)}) d\gamma \\
                        &= \frac12 \int (h(x,g(x)) +
                          h(x,f(x))) d\mu. 
  \end{align*}
  Hence, $\gamma$ is unique if and only if $\mu$ is unique. We observe
  now that the map $\map{f}{G}{F_2}$ is one-to-one. Thus, for a Borel
  set $B\in \R^2$,
  \begin{align*}
    \frac12 \mu(B) = \int \ind{x\in B} \ind{y=f(x)} d\gamma =
    \int \ind{y\in f(B)} d\gamma = \nu
    (f(B)),  
  \end{align*}
  and the uniqueness of $\mu$ follows.
\end{Example}

\section{Equilibrium with insider}
\label{sec:equil-with-insid}

We consider a single-period financial market. There are a bank account
with zero interest rate and a stock. The stock value at maturity $t=1$
is represented by a random variable $V$. The stock price $S$ at
initial time $t=0$ is the result of the interaction between
\emph{noise traders}, an \emph{insider}, and \emph{market makers},
where
\begin{enumerate}
\item The noise traders place an order for $U$ stocks; $U$ is a random
  variable.
\item The insider knows the {value} of both $U$ and $V$ and places an
  order for $Q$ stocks.  The \emph{trading strategy} $Q$ is a
  $(U,V)$-measurable random variable.
\item The market makers observe only the total order $R=Q+U$.  They
  quote the price $S = f(R)$ according to a \emph{pricing rule}
  $f=f(r)$, which is a Borel function
  $\map{f}{\R}{\overline{\R}\set \R \cup
    \braces{-\infty}\cup\braces{\infty}}$.
\end{enumerate}

\begin{Definition}
  \label{def:2}
  An \emph{equilibrium} $(Q,f)$ is defined by a trading strategy $Q$
  and a pricing rule $f=f(r)$ such that
  \begin{enumerate}
  \item Given the total order $R= Q + U$, the price $S = f(R)$ is
    \emph{efficient} in the sense that
    \begin{displaymath}
      S = \cEP{V}{R}.
    \end{displaymath}
  \item Given the pricing rule $f=f(r)$, the order $Q$
    \emph{maximizes} insider's profit:
    \begin{displaymath}
      Q(V-f(Q+U))  = \max_{q\in \R} q (V-f(q+U)), 
    \end{displaymath}
    with the convention $0\times \infty = 0$.
  \end{enumerate}
\end{Definition}

\begin{Remark}
  \label{rem:4}
  Up to minor technical differences, our notion of equilibrium
  coincides with the one in~\citet{RochVila:94}. It differs from the
  classical equilibrium from \citet{Kyle:85} in the ability of the
  insider to observe noise traders' order flow $U$. In the model of
  \citet{Kyle:85}, the insider maximizes $\cEP{Q(V - f(Q+U)}{V}$ over
  all $V$-measurable random variable $Q$.
\end{Remark}

The following result links the existence of equilibrium with the
existence of an optimal map for~\eqref{eq:21} that induces an optimal
plan for~\eqref{eq:5}.

\begin{Theorem}
  \label{th:6}
  Let $Y = (U,V)\in \mathcal{L}^2$ and denote $\nu = \law{Y}$.  An
  equilibrium $(Q,f)$ exists if and only if there is an optimal map
  $X$ for~\eqref{eq:21} such that the law of $(X,Y)$ is an optimal
  plan for~\eqref{eq:5}. Insider's profit is unique and given by
  \begin{displaymath}
    Q(V-f(Q+U)) = -c(X,Y) = -\phi_G(U,V),
  \end{displaymath}
  where $G\in \mathfrak{M}$ is a maximizer for~\eqref{eq:18}.

  Moreover, there are equilibrium $(Q,f)$ and optimal map $X=(R,S)$
  such that the pricing rule $\map{f}{\R}{\overline{\R}}$ is an
  increasing function, the total order $Q+U = R$, and the price
  $f(Q+U)=S$.
\end{Theorem}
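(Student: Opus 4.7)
The proof hinges on the correspondence $R = Q + U$, $S = f(R)$, $X = (R, S)$, $\gamma = \law{X, Y}$, and the algebraic identity
\begin{equation*}
  q(V - f(q + U)) = -c\bigl((q + U,\, f(q + U)),\, (U, V)\bigr),
\end{equation*}
which turns the insider's pointwise maximization over $q \in \R$ into the minimization of $c(z, Y)$ over $z \in \graph f$. When $\graph f \subseteq G$ for some $G \in \mathfrak{M}$, this minimum is bounded below by $\phi_G(Y)$, with equality whenever the argmin lies in $\graph f$.

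For the ``if'' direction, I start with an optimal map $X = (R, S)$ whose joint law with $Y$ is an optimal plan. Theorem~\ref{th:2} supplies a dual maximizer $G \in \mathfrak{M}$ with $X \in G$ and $c(X, Y) = \phi_G(Y)$ almost surely. Using the construction of Remark~\ref{rem:2}, I select an increasing Borel function $f \colon \R \to \overline{\R}$ with $\graph f \subseteq G$ such that $S = f(R)$ a.s., and set $Q = R - U$. The efficiency $S = \cEP{V}{R}$ follows from the martingale property $\cEP{V}{X} = S$ together with the $R$-measurability of $X$. Insider optimality is pointwise: for every $q \in \R$,
\begin{equation*}
  q(V - f(q + U)) = -c\bigl((q + U,\, f(q + U)),\, Y\bigr) \leq -\phi_G(Y) = -c(X, Y) = Q(V - S),
\end{equation*}
with equality at $q = Q$ because $X$ minimizes $c(\cdot, Y)$ over $G$.

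For the converse, given an equilibrium $(Q, f)$, I define $R = Q + U$, $S = f(R)$, $X = (R, S)$. Efficiency together with the $R$-measurability of $X$ yields $\cEP{V}{X} = S$. The key technical point is the companion martingale identity $\cEP{U}{R} = R$, equivalently $\cEP{Q}{R} = 0$. I would derive it from insider optimality: pointwise maximization of $r \mapsto (r - U)(V - f(r))$ at $r = R$ yields the subdifferential inclusion $V - S \in Q\,\partial f(R)$, and conditioning on $R$ together with $\cEP{V - S}{R} = 0$ forces $\cEP{Q}{R} = 0$ on the co-countable set where $\partial f$ is single-valued; the atoms of $\law{R}$ at kinks are controlled via support-structure arguments mirroring the proof of Theorem~\ref{th:4}. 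With the martingale property in hand, $\gamma \in \Gamma(\nu)$, and combining $\graph f \subseteq G$ (for the maximal monotone extension of $\graph f$) with insider optimality forces $c(X, Y) = \phi_G(Y)$, so Theorem~\ref{th:2} certifies $\gamma$ as an optimal plan and $X$ as an optimal map.

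The profit formula $Q(V - S) = -c(X, Y) = -\phi_G(Y)$ then follows at once, and its uniqueness across equilibria comes from the uniqueness of $\phi_G$ on $\supp \nu$ (Theorem~\ref{th:2}). The ``moreover'' clause is delivered by the ``if'' construction, which already produces an increasing $f$ with $R = Q + U$ and $f(R) = S$; alternatively, starting from any dual maximizer $G$, Theorem~\ref{th:4} yields the optimal map $X = (R, S)$, from which an increasing selection $f$ from $G$ completes the equilibrium. The principal obstacle is the $\cEP{Q}{R} = 0$ identity in the ``only if'' direction for a non-smooth pricing rule; resolving it requires a careful interplay between subdifferential calculus and the fine structure of the optimal transport support supplied by Theorems~\ref{th:1}--\ref{th:2}.
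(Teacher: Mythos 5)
Your high-level idea is right: the identity $q(V - f(q+U)) = -c((q+U, f(q+U)), Y)$ turns insider optimality into a $c$-minimization and the profit formula follows from matching this against $\phi_G$. But the execution misses three technical devices that the paper needs, and without them the argument has genuine holes. First, in the ``if'' direction you "select an increasing $f$ with $\graph f \subseteq G$ such that $S = f(R)$," but for a general optimal map $X=(R,S)$ the coordinate $S$ need not be $R$-measurable (this fails precisely when $\law R$ charges a vertical segment of $G$). The paper first projects to $\widetilde X = (R, \cEP{V}{R})$, checks this is still an optimal map, and only then applies a representation $\widetilde S = f(R)$ via Lemma~\ref{lem:12}. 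Second, in the ``only if'' direction you try to prove $\cEP{Q}{R}=0$ directly from a subdifferential first-order condition. This is not the right route: the identity fails to be forced at kinks of $f$ where the insider could place a nonzero order, and in any case it is not needed. The paper's Lemma~\ref{lem:14} instead replaces $R$ by $\widetilde R = \cEP{U}{R}$ and shows $(\widetilde R, S)$ furnishes an optimal map, without ever claiming $\widetilde R = R$; the profit identity $(R-U)(S-V) = (\widetilde R - U)(S-V)$ is then recovered ex post (Eq.~(30)). Third, and most importantly, you write "$\graph f \subseteq G$ for the maximal monotone extension of $\graph f$," but an equilibrium pricing rule is a priori an arbitrary Borel function $\R\to\overline\R$, with no monotonicity built in, so $\graph f$ need not embed in any $G\in\mathfrak M$. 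The paper's Lemma~\ref{lem:13} is exactly what circumvents this: for \emph{any} Borel $f$, the value function $\phi(y)=\inf_r(y_1-r)(y_2-f(r))$ is shown to lie below $\phi_G$ for some maximal monotone $G$, by verifying the convexity-type characterization of Lemma~\ref{lem:9} pointwise; this does not require $\graph f\subseteq G$ at all.

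Your appeal to Theorem~\ref{th:4} to produce the canonical optimal map in the "moreover" clause also overreaches: that theorem requires $\nu$ to be $\mathcal D$-regular, which is not a hypothesis of Theorem~\ref{th:6}. The profit-uniqueness argument, on the other hand, matches the paper's.
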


We divide the proof of the theorem into lemmas.

\begin{Lemma}
  \label{lem:12}
  Let $Y=(U,V)\in \mathcal{L}^2$, $\nu = \law{Y}$, and $X=(R,S)$ be an
  optimal map for~\eqref{eq:21} such that $S$ is $R$-measurable and
  the law of $(X,Y)$ is an optimal plan for~\eqref{eq:5}. Then there
  is an increasing function $\map{f}{\R}{\overline{\R}}$ such that
  $S=f(R)$ and $(Q,f)$ is an equilibrium with $Q=R-U$.
\end{Lemma}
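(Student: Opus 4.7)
The plan is to read off the pricing rule $f$ from the maximal monotone set underlying the optimal plan, and then verify the two requirements of Definition~\ref{def:2} using the characterizations of optimal plans from Section~\ref{sec:optim-transp-probl}. First, Theorems~\ref{th:1} and~\ref{th:2} give a set $G\in\mathfrak{M}$ with $\supp\law{X}\subset G$ and $c(X,Y)=\phi_G(Y)$ almost surely. Let $P_1$ denote the projection of $G$ on the first coordinate. A key preliminary observation is that $U\in P_1$ almost surely: writing $G=\graph(\partial\psi)$ for a proper closed convex $\psi$, if $b:=\sup P_1<\infty$ then $G$ contains a sequence $(z_1^n,z_2^n)$ with $z_1^n\to b$ and $z_2^n\to+\infty$, so for any $u>b$ and $v\in\R$, $\phi_G(u,v)\leq (z_1^n-u)(z_2^n-v)\to-\infty$; a symmetric argument treats the left endpoint. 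Since $\phi_G(U,V)=c(X,Y)$ is real-valued almost surely, this forces $U\in P_1$ almost surely.

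Next, construct $f:\R\to\overline{\R}$ explicitly. Since $S$ is $R$-measurable, write $S=g(R)$ for a Borel $g$. For $r\in P_1$ the section $G_r:=\{s:(r,s)\in G\}$ is a closed interval, reducing to a singleton outside a countable set $T\subset P_1$. Define $f(r)$ to be the unique element of $G_r$ for $r\in P_1\setminus T$; set $f(r)=g(r)$ for those $r\in T$ in the range of $R$ (which lies in $G_r$ because $(R,S)\in G$); set $f(r)$ to any element of $G_r$ for the remaining $r\in T$; and set $f(r)=-\infty$ for $r<\inf P_1$, $f(r)=+\infty$ for $r>\sup P_1$. Monotonicity of $G$ makes $f$ increasing on $\R$, Borel measurability holds because $f$ agrees with the increasing function $r\mapsto\inf G_r$ off a countable set, and by construction $(r,f(r))\in G$ for every $r\in P_1$ while $f(R)=S$ almost surely.

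Finally, verify Definition~\ref{def:2}. For efficiency, $R$-measurability of $S$ gives $\sigma(X)=\sigma(R)$, so the martingale identity $X=\cEP{Y}{X}$ yields $S=\cEP{V}{X}=\cEP{V}{R}$. For insider optimality, fix $q\in\R$ and set $r=q+U$. If $r\in P_1$, then $(r,f(r))\in G$ gives $(r-U)(f(r)-V)\geq\phi_G(U,V)=(R-U)(S-V)$, and multiplying by $-1$ produces $q(V-f(q+U))\leq(R-U)(V-S)$. If $r<\inf P_1$, the observation above yields $U\geq\inf P_1>r$, so $q<0$ while $V-f(r)=+\infty$; hence $q(V-f(q+U))=-\infty\leq(R-U)(V-S)\in\R$, and the case $r>\sup P_1$ is symmetric. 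The main obstacle is the construction of $f$: arranging an increasing, Borel, $G$-valued selection compatible with $g$ on the range of $R$, together with the $\pm\infty$ extension, whose validity relies on the preliminary fact $U\in P_1$ a.s.
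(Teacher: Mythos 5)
Your proposal matches the paper's proof in its essential structure: extract the maximal monotone set $G$ from Theorems~\ref {th:1}--\ref{th:2}, read an increasing $G$-valued pricing rule off the projection $P_1$, extend it by $\pm\infty$ beyond $P_1$, and verify efficiency and insider optimality via $\phi_G(U,V)=c(X,Y)$. The one place where you depart from the paper is the justification that $U$ stays in (the closure of) $P_1$: the paper invokes Lemma~\ref{lem:17} on the structure of $\dom\phi_G$, whereas you argue directly that if $\sup P_1=b<\infty$ then $G$ escapes vertically and hence $\phi_G(u,v)=-\infty$ for every $u>b$. Both routes give the same conclusion, and yours is arguably more self-contained.

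Two small imprecisions, neither fatal. First, you announce that this forces $U\in P_1$ a.s., but your argument only yields $\phi_G(u,v)=-\infty$ for $u>\sup P_1$ and $u<\inf P_1$, i.e.\ $U\in\closure{P_1}$ a.s.; the endpoints of $P_1$ need not lie in $P_1$. This is exactly what the paper claims, and it is all your subsequent case analysis of $r=q+U$ uses (namely $U\geq\inf P_1$ and $U\leq\sup P_1$), so the over-claim in the intermediate statement should simply be softened to the closure. Second, your definition of $f$ leaves $f(\inf P_1)$ and $f(\sup P_1)$ unspecified when those endpoints fall outside $P_1$; one should complete the definition there (e.g.\ $f(\inf P_1)=-\infty$ and $f(\sup P_1)=+\infty$ when $G$ has no point over that abscissa), and then observe that in the optimality check the convention $0\times\infty=0$ still gives $q(V-f(q+U))\leq (R-U)(V-S)$ at those boundary arguments. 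With these tweaks in place, the proof is complete and consistent with the paper's.
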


\begin{proof}
  By construction, $S = \cEP{V}{R}$. Hence, we only need to verify the
  profit maximization condition for the order $Q = R-U$.
  Theorem~\ref{th:1} yields $G\in \mathfrak{M}$ such that $(R,S)\in G$
  and
  \begin{displaymath}
    (U-R)(V-S) = \min_{(r,s)\in G}(U-r)(V-s) = \phi_G(U,V). 
  \end{displaymath}
  Let $P_1$ be the projection of $G$ on the first or
  $r$-coordinate. Clearly, $P_1$ is an interval. As $S$ is
  $R$-measurable, there is an \emph{increasing} function $f=f(r)$ on
  $P_1$ such that $S=f(R)$ and $(r,f(r)) \in G$ for $r\in P_1$. By
  construction,
  \begin{displaymath}
    (U-R)(V-S) = \min_{r\in P_1} (U-r)(V-f(r)). 
  \end{displaymath}
  We now extend $f$ to an increasing function from $\R$ to
  $\overline{\R}$ by setting its values to $-\infty$ on the left and
  to $+\infty $ on the right of $P_1$. As $\phi_G(U,V)>-\infty$,
  Lemma~\ref{lem:17} yields that $U$ takes values in the closure of
  $P_1$. Under the standing convention: $0\times \infty = 0$, we
  obtain that
  \begin{displaymath}
    \phi_G(U,V)= (U-R)(V-S) = \min_{r\in \R}(U-r)(V-f(r)). 
  \end{displaymath}
  Hence, $(Q,f)$ is an equilibrium with $Q=R-U$.
\end{proof}

\begin{Lemma}
  \label{lem:13}
  Let $\map{f}{\R}{\overline{\R}}$ be a Borel function and
  \begin{displaymath}
    \phi(y) = \inf_{r\in \R}(y_1 - r)(y_2 - f(r)) \in [-\infty,0],
    \quad y\in \R^2,  
  \end{displaymath}
  with the convention: $0\times \infty=0$.  Then there is
  $G\in \mathfrak{M}$ such that $\phi\leq \phi_G$.
\end{Lemma}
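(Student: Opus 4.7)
The plan is to invoke Lemma~\ref{lem:9}: it suffices to verify inequality~\eqref{eq:19} for the function $\phi$ defined in the statement, i.e., that
\[
(1-t)\phi(y^0) + t\phi(y^1) \leq t(1-t)\,c(y^0,y^1), \quad y^0, y^1 \in \R^2,\ t\in[0,1].
\]
Before any case analysis, record that $\phi(y) \leq 0$ everywhere: plugging $r=y_1$ into the defining infimum and using the convention $0\times\infty =0$ gives $(y_1-y_1)(y_2-f(y_1)) = 0$, hence $\phi(y)\leq 0$.

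Several sub-cases dispose of themselves. At $t\in\{0,1\}$, \eqref{eq:19} reduces to $\phi(y^i)\leq 0$. If $y^0_1=y^1_1$, then $c(y^0,y^1)=0$ and the bound $\phi\leq 0$ again suffices. If $\phi(y^0)=-\infty$ or $\phi(y^1)=-\infty$ and $t\in(0,1)$, then the left-hand side is $-\infty$ and there is nothing to prove.

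The remaining (principal) case is $t\in(0,1)$, $y^0_1\neq y^1_1$, and $\phi(y^0),\phi(y^1)\in\R$. Choose $r_t = (1-t)y^0_1 + t y^1_1$, which lies strictly between $y^0_1$ and $y^1_1$. The first step is to argue that $f(r_t)$ must be finite: indeed, if $f(r_t)=+\infty$, then for the index $i$ with $y^i_1>r_t$ (one exists by the strict inequality) the product $(y^i_1-r_t)(y^i_2-f(r_t))$ equals $-\infty$, forcing $\phi(y^i)=-\infty$, a contradiction; the case $f(r_t)=-\infty$ is symmetric. Once $f(r_t)\in\R$, set $a_i = y^i_1 - r_t$ and $b_i = y^i_2 - f(r_t)$, and observe that $(1-t)a_0 + t a_1 = r_t - r_t = 0$. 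Combining the bound $\phi(y^i) \leq a_i b_i$ (from the defining infimum) with the algebraic identity
\[
(1-t)a_0 b_0 + t a_1 b_1 = t(1-t)(a_0-a_1)(b_0-b_1) + \bigl((1-t)a_0 + t a_1\bigr)\bigl((1-t)b_0 + t b_1\bigr),
\]
in which the last summand vanishes, collapses the estimate to $t(1-t)(a_0-a_1)(b_0-b_1) = t(1-t)c(y^0,y^1)$, as required.

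The main (and essentially only) obstacle is the bookkeeping with the possibly infinite values of $f$ and $\phi$. The delicate point is that the algebraic identity needs finite arithmetic, so one must justify that at the interpolated point $r_t$ the function $f$ cannot be infinite; this is precisely the sign argument above and relies crucially on having already discarded the cases in which some $\phi(y^i)$ equals $-\infty$. With that subtlety handled, Lemma~\ref{lem:9} produces the desired $G\in\mathfrak{M}$.
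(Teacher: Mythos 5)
Your proof is correct and takes essentially the same route as the paper: both verify condition~\eqref{eq:19} of Lemma~\ref{lem:9} by choosing $r=(1-t)y^0_1+ty^1_1$ and combining the bound $\phi(y^i)\le(y^i_1-r)(y^i_2-f(r))$ with the convexity-type algebraic identity. The only difference is bookkeeping for the possibility $\abs{f(r)}=\infty$: the paper absorbs it in one stroke by replacing each term with its negative part $\min(\cdot,0)$, whereas you first rule it out in the non-degenerate case via a sign argument; both devices are sound.
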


 \begin{proof}
   Given $y^0,y^1\in \R^2$ and $t\in [0,1]$, we denote
   $r = y^0_1 + t(y^1_1 - y^0_1)$ and deduce that
   \begin{align*}
     (1-t) \phi(y^0) + t \phi(y^1) & \leq (1-t) \min((y^0_1 - r)(y^0_2
                                     -
                                     f(r)),0) \\
                                   &\quad  + t \min((y^1_1 - r)(y^1_2 - f(r)),0) \\
                                   &\leq t(1-t) (y^1_1-y^0_1)(y^1_2-y^0_2),
   \end{align*}
   where in the middle we used the negative parts to account for the
   possibility that $\abs{f(r)}=\infty$.  The result now follows
   from~Lemma~\ref{lem:9}.
 \end{proof}

 \begin{Lemma}
   \label{lem:14}
   Let $Y=(U,V)\in \mathcal{L}^2$, $\nu = \law{Y}$, and $(Q,f)$ be an
   equilibrium with the total order $R=Q+U$ and the price
   $S=f(R)$. Then $X=(\widetilde R, S)$ with $\widetilde R=\cEP{U}{R}$
   is an optimal map for~\eqref{eq:21}, the law of $(X,Y)$ is an
   optimal plan for~\eqref{eq:5}, and
   \begin{equation}
     \label{eq:30}
     Q(f(Q+U) - V) = (R-U)(S-V) = (\widetilde R-U)(S-V). 
   \end{equation}
 \end{Lemma}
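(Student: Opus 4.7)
The plan is to verify that $X = (\widetilde R, S) \in \mathcal{X}(Y)$, to express $\EP{c(X,Y)}$ in terms of an expected insider profit, and to match this with the dual value $\int \phi_G\,d\nu$ for a maximal monotone set $G$ produced by Lemma~\ref{lem:13}, so that Theorem~\ref{th:2} forces optimality. The first step is immediate: $R = Q+U$ is $Y$-measurable (as $Q$ is), hence so are $\widetilde R = \cEP{U}{R}$ and $S = f(R)$; since $\sigma(X) \subset \sigma(R)$, the tower property gives $\cEP{U}{X} = \cEP{\widetilde R}{X} = \widetilde R$ and $\cEP{V}{X} = \cEP{S}{X} = S$, so $\cEP{Y}{X} = X$.

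The heart of the proof lies in two observations. First, with $\phi(u,v) = \inf_{r}(u-r)(v-f(r))$, the change of variable $r = q + U$ converts the insider's profit maximization $Q(V-f(Q+U)) = \max_q q(V-f(q+U))$ into the pointwise identity $(R-U)(S-V) = \phi(U,V)$, and Lemma~\ref{lem:13} furnishes $G \in \mathfrak{M}$ with $\phi \leq \phi_G$. Second, the efficient-pricing condition $\cEP{V}{R} = S$ together with the $\sigma(R)$-measurability of $R-\widetilde R$ gives
\[
  \EP{(R-\widetilde R)(S-V)} = \EP{(R-\widetilde R)\,\cEP{S-V}{R}} = 0.
\]
Combining the two,
\[
  \EP{c(X,Y)} = \EP{(\widetilde R - U)(S-V)} = \EP{(R-U)(S-V)} = \EP{\phi(U,V)} \leq \int \phi_G\,d\nu.
\]
The trivial bound $\EP{c(X,Y)} \geq \min_{\gamma\in \Gamma(\nu)} \int c\,d\gamma$, valid since $\law{X,Y}\in \Gamma(\nu)$, combined with the duality $\min_{\gamma} \int c\,d\gamma = \max_{G'\in\mathfrak{M}} \int \phi_{G'}\,d\nu$ from Theorem~\ref{th:2}, forces the whole chain to collapse to equalities. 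Hence $X$ is an optimal map, $\law{X,Y}$ is an optimal plan, $G$ is a dual maximizer, and in particular $\phi(U,V) = \phi_G(U,V)$ almost surely.

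The identity~\eqref{eq:30} is then a clean byproduct: the first equality is immediate from $Q = R-U$ and $S=f(R)$, while the previous step yields $(R-U)(S-V) = \phi(U,V) = \phi_G(U,V)$ a.s., and Theorem~\ref{th:2} applied to the optimal pair $(\law{X,Y},G)$ gives $c(X,Y) = (\widetilde R - U)(S-V) = \phi_G(U,V)$ a.s., so both sides agree. I expect the only delicate point to be this ``$R$-to-$\widetilde R$'' replacement: the equilibrium only sees the scalar total order $R$, while an element of $\mathcal{X}(Y)$ has to be a $Y$-measurable two-dimensional martingale, and the projection $\widetilde R$ is what restores the martingale property of $X$ against $Y$ without altering the transport cost, precisely because of the orthogonality $R - \widetilde R \perp S - V$ in $\mathcal{L}^2$ provided by efficient pricing.
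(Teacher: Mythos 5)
Your overall strategy is the same as the paper's: express $\EP{c(X,Y)}$ as $\EP{\phi(U,V)}$ via the profit-maximization identity and the martingale conditions, dominate by $\int\phi_G\,d\nu$ through Lemma~\ref{lem:13} and Lemma~\ref{lem:7}, and let the duality in Theorem~\ref{th:2} collapse the chain. The verification that $X\in\mathcal{X}(Y)$ and the derivation of~\eqref{eq:30} via Theorem~\ref{th:2} are fine.

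The gap is in the step you yourself flag as delicate, and your framing of it hides the problem rather than resolves it. You write
\begin{displaymath}
  \EP{(R-\widetilde R)(S-V)} = \EP{(R-\widetilde R)\,\cEP{S-V}{R}} = 0
\end{displaymath}
and later call this ``the orthogonality $R-\widetilde R\perp S-V$ in $\mathcal{L}^2$.'' But no integrability of $R$ (equivalently, of $Q$) is assumed or can be derived: the equilibrium definition gives an efficient pricing rule and a pointwise profit optimization, neither of which controls the moments of $R=Q+U$. Without $R\in\mathcal{L}^1$ one cannot invoke $\mathcal{L}^2$-orthogonality, and the pull-out rule $\cEP{(R-\widetilde R)(S-V)}{R}=(R-\widetilde R)\cEP{S-V}{R}$ requires the product $(R-\widetilde R)(S-V)$ to be integrable, which is among the things one is trying to establish --- the only a~priori information is that $(R-U)(S-V)=\phi(U,V)\le 0$ (so its expectation sits in $[-\infty,0]$) while $(\widetilde R-U)(S-V)=c(X,Y)$ is genuinely integrable. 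The paper handles exactly this by localizing on $\{\abs{R}\le n\}$, proving
\begin{displaymath}
  \EP{\phi(U,V)\,\ind{\abs{R}\le n}} = \EP{(U-\widetilde R)(V-S)\,\ind{\abs{R}\le n}},
\end{displaymath}
and then passing to the limit with monotone/dominated convergence; this simultaneously yields the identity and the finiteness of $\EP{\phi(U,V)}$. Your write-up needs this localization (or an equivalent truncation argument) to close; once inserted, the remainder of your proof goes through.
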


 \begin{proof}
   From the definition of the equilibrium we obtain that
   \begin{displaymath}
     \phi(U,V) =  Q(f(Q+U)-V)= (R-U)(S-V), 
   \end{displaymath}
   where $\phi(u,v) = \inf_{r\in \R}(u-r)(v-f(r))$, $(u,v)\in \R^2$.
   We claim that
   \begin{equation}
     \label{eq:31}
     \EP{\phi(U,V)}  = \EP{(U-\widetilde R)(V-S)}.
   \end{equation}
   As the integrability properties of $R$ are unknown, we use a
   localization argument.  For $n\geq 1$ from the martingale
   properties $\cEP{V}{R} = S$ and $\cEP{U}{R} = \widetilde R$ we
   deduce that
   \begin{align*}
     \EP{\phi(U,V) \ind{\abs{R}\leq n}} & =
                                          \EP{(U-R)(V-S)\ind{\abs{R}\leq
                                          n}} \\
                                        & = \EP{U(V-S)\ind{\abs{R}\leq
                                          n}} \\
                                        & = \EP{(U-\widetilde R)(V-S)\ind{\abs{R}\leq n}}.
   \end{align*}
   Taking the limit as $n\to \infty$, we obtain~\eqref{eq:31} by the
   dominated convergence theorem.

   Lemma~\ref{lem:13} yields $G\in \mathfrak{M}$ such that
   $\phi \leq \phi_G$. From Lemma~\ref{lem:7} we deduce that
   \begin{displaymath}
     \EP{\phi(U,V)} \leq \EP{\phi_G(U,V)} = \int \phi_G d\nu \leq \int
     c(x,y) d\gamma, \quad 
     \gamma\in \Gamma(\nu). 
   \end{displaymath}
   In view of~\eqref{eq:31} and since
   $\widetilde\gamma = \law{\widetilde R,S,U,V}$ belongs to
   $\Gamma(\nu)$, we obtain that
   \begin{displaymath}
     \EP{(U-\widetilde R)(V-S)} = \EP{\phi(U,V)} = \EP{\phi_G(U,V)} =  \int
     c(x,y) d\widetilde \gamma.
   \end{displaymath}
   It follows that $\widetilde\gamma$ is an optimal plan,
   $(\widetilde R, S)$ is an optimal map, and
   $\phi(U,V) = \phi_G(U,V)$. Finally, Theorem~\ref{th:2} yields that
   \begin{displaymath}
     \phi_G(U,V)= c(X,Y) = (\widetilde R-U)(S-V), 
   \end{displaymath}
   and we obtain~\eqref{eq:30}.
 \end{proof}

 \begin{proof}[Proof of Theorem~\ref{th:6}] If $X=(R,S)$ is an optimal
   map, then $\widetilde X = (R,\widetilde S)$ with
   $\widetilde S = \cEP{V}{R} = \cEP{S}{R}$ is an optimal map as well
   and $\widetilde S$ is $R$-measurable.  By Theorem~\ref{th:2},
   \begin{displaymath}
     c(X,Y) = c(\widetilde X,Y) = \phi_G(Y) = \phi_G(U,V)
   \end{displaymath}
   for every maximizer $G\in \mathfrak{M}$ to~\eqref{eq:18}. In
   particular, $c(X,Y)$ is the same random variable for every optimal
   map $X$.  After these observations, the proof follows from
   Lemmas~\ref{lem:12} and~\ref{lem:14}.
 \end{proof}

 We now state sufficient conditions for the existence and uniqueness
 of equilibrium. Theorem~\ref{th:7} generalizes a result
 from~\citet{RochVila:94}, where the distribution of $(U,V)$ has a
 compact support and a continuous density.

 \begin{Theorem}
   \label{th:7}
   Let $Y=(U,V)\in \mathcal{L}^2$ and suppose that the law of ${Y}$ is
   $\mathcal{D}$-regular. Then an equilibrium $(Q,f)$ exists.

   If, in addition, the laws of $U$ and $V$ are atomless, then
   insider's order $Q$, the total order $R=Q+U$, and the price
   $S=f(R)$ are unique. Moreover, $X=(R,S)$ is the unique optimal map
   for~\eqref{eq:21} and $\gamma = \law{X,Y}$ is the unique optimal
   plan for~\eqref{eq:5}.
 \end{Theorem}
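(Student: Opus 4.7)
The plan is to derive Theorem~\ref{th:7} as a corollary of Theorems~\ref{th:4}, \ref{th:5}, and~\ref{th:6}, together with Lemma~\ref{lem:14}; the heavy lifting on the transport side is already done.

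For existence, I would invoke Theorem~\ref{th:4}: since $\nu=\law{Y}$ is $\mathcal{D}$-regular, there is an optimal map $X$ for~\eqref{eq:21} whose law with $Y$ is an optimal plan for~\eqref{eq:5}. The ``moreover'' clause of Theorem~\ref{th:6} then delivers an equilibrium $(Q,f)$ together with an optimal map $X=(R,S)$ satisfying $Q+U=R$ and $f(Q+U)=S$, with $f$ increasing.

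For uniqueness under the additional atomless hypothesis on $\law{U}$ and $\law{V}$, I would combine Theorem~\ref{th:5} with Lemma~\ref{lem:14}. Theorem~\ref{th:5} identifies the unique optimal map $X=(R,S)=(D^c_1\phi_G(Y),\,D^c_2\phi_G(Y))$ and the unique optimal plan $\gamma=\law{X,Y}$, where $G$ is a maximizer of~\eqref{eq:18}. Given any equilibrium $(Q',f')$ with $R'=Q'+U$ and $S'=f'(R')$, Lemma~\ref{lem:14} shows that $(\cEP{U}{R'},\,S')$ is an optimal map, and by uniqueness it equals $(R,S)$; in particular $S'=S$, which settles the uniqueness of the price. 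The pointwise identity $(R'-U)(S'-V)=\phi_G(U,V)=(R-U)(S-V)$ from Lemma~\ref{lem:14} then yields $(R'-R)(S-V)=0$, so $R'=R$ on the event $\{S\neq V\}$, and consequently $Q'=R-U$ there.

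The step I expect to be the main obstacle is closing the argument on the residual event $\{S=V\}$, where the above identity is vacuous. The plan is to observe, using the envelope formula $\partial\phi_G/\partial u(Y)=V-S$, that $\{S=V\}$ coincides $\nu$-a.s.\ with $\{Y\in G\}$ (off $G$ one has $\phi_G(Y)<0$, forcing both factors $x_1^*(Y)-U$ and $x_2^*(Y)-V$ to be nonzero), and that on $\{Y\in G\}$ one has $R=U$. On this set the efficiency condition $S'=\cEP{V}{R'}=V$ forces $V$ to be $\sigma(R')$-measurable, while $\cEP{U}{R'}=R=U$ forces $U$ to be $\sigma(R')$-measurable; combining these with the insider's profit-maximization condition of Definition~\ref{def:2} against the increasing pricing rule associated to $G$, and exploiting the atomlessness of $\law{U}$ and $\law{V}$, one concludes $Q'=0$ on $\{Y\in G\}$, giving $R'=R=U$ there and hence $R'=R$, $Q'=R-U$ almost surely.
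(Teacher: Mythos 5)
Your existence argument and the identification of the unique price are exactly the paper's route (Theorem~\ref{th:4} $\Rightarrow$ Theorem~\ref{th:6}, then Theorem~\ref{th:5} and Lemma~\ref{lem:14}). Your treatment of the event $\{S\neq V\}$ is essentially the paper's ``$\phi(U,V)<0$'' case, organized a bit differently but equivalent: the paper reads $R$ off directly from $R = U - \phi_G(U,V)/(V-S)$, while you factor the same identity into $(R'-R)(S-V)=0$. That part is fine.

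The residual event $\{S=V\}$ (equivalently $\{\phi_G(Y)=0\}$, a.s.\ $\{Y\in G\}$) is where your proposal has a genuine gap. You argue that $S'=\cEP{V}{R'}=V$ and $\cEP{U}{R'}=R=U$ on this event ``force'' $V$ and $U$ to be $\sigma(R')$-measurable, then appeal to profit maximization against ``the increasing pricing rule associated to $G$.'' Two problems. First, $\cEP{V}{R'}=V$ and $\cEP{U}{R'}=U$ hold only \emph{restricted to} $\{Y\in G\}$, and that event is not known to be $\sigma(R')$-measurable, so no measurability of $U$ or $V$ follows; the usual conditional-variance trick needs the conditioning event to be in the $\sigma$-algebra. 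Second, the pricing rule $f'$ of an arbitrary competing equilibrium $(Q',f')$ is just a Borel map into $\overline\R$; it need not be increasing, and it is a different object from the increasing $f$ constructed from $G$ in Lemma~\ref{lem:12}, so ``the increasing pricing rule associated to $G$'' is not the one constraining $Q'$. What the paper actually uses here is Lemma~\ref{lem:15}, which shows that for \emph{any} Borel $f'$ the set of $(u,v)$ with $\phi'(u,v)=0$ but $\{r: f'(r)=v\}\neq\{u\}$ is contained in $(A\times\R)\cup(\R\times A)$ for a countable $A$; atomlessness of $\law{U},\law{V}$ then kills this exceptional set, and the first-order condition $(U-R')(V-f'(R'))=0$ forces $R'=U$. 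Your sketch does not contain a substitute for this, and I do not see how to close the argument without it (or something equivalent about the structure of $f'$ near the level set $\{\phi=0\}$).
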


 For the proof we need a lemma.

 \begin{Lemma}
   \label{lem:15}
   Let $\map{f}{\R}{\overline{\R}}$ be a Borel function and
   \begin{displaymath}
     \phi(u,v) = \inf_{r\in \R}(u - r)(v - f(r)) \in [-\infty,0],
     \quad (u,v)\in \R^2,  
   \end{displaymath}
   with the convention: $0\times \infty=0$. There is a countable set
   $A\subset \R$ such that if $u,v\notin A$ and $\phi(u,v)=0$, then
   \begin{displaymath}
     f^{-1}(v)
     \set \descr{r\in \R}{f(r) = v} = \braces{u}. 
   \end{displaymath}
 \end{Lemma}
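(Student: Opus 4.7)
My plan is to first derive an explicit description of $\{\phi=0\}$ and then construct $A$ as the union of four countable pieces, each excluding a specific kind of pathology. A case analysis on the sign of $(u-r)(v-f(r))$ for $r<u$ and $r>u$, together with the convention $0\cdot\infty=0$ at $r=u$, shows that $\phi(u,v)=0$ iff $f(r)\le v$ for all $r<u$ and $f(r)\ge v$ for all $r>u$. Setting $L(v):=\sup\{r:f(r)<v\}$, $U(v):=\inf\{r:f(r)>v\}$, $\ell(u):=\sup_{r<u}f(r)$, $\rho(u):=\inf_{r>u}f(r)$, $M_v:=\{u:\phi(u,v)=0\}$ and $N_u:=\{v:\phi(u,v)=0\}$, this condition reads $L(v)\le u\le U(v)$, or dually $\ell(u)\le v\le\rho(u)$, so that $M_v=[L(v),U(v)]\cap\R$ and $N_u=[\ell(u),\rho(u)]\cap\R$.

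Next I take $A:=A_1\cup A_2\cup A_3\cup A_4$, with $A_1:=\{v:L(v)<U(v)\}$, $A_2:=\{u:\ell(u)<\rho(u)\}$, $A_3:=\{u:\ell(u)=\rho(u)=:\hat f(u)\in\R,\ f(u)\ne\hat f(u)\}$, and $A_4:=\{v\notin A_1:M_v=\{u_v\},\ f^{-1}(v)\supsetneq\{u_v\}\}$. The countability of $A_1$ and $A_2$ comes from the standard disjoint-intervals trick: the characterization forces $f\equiv v$ on $(L(v),U(v))$ for $v\in A_1$, so distinct $v$'s give disjoint such intervals, and symmetrically for $A_2$. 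For $A_3$ I partition by the sign of $f(u)-\hat f(u)$; on the subset where $f(u)>\hat f(u)$, pick a rational $q_u\in(\hat f(u),f(u))$, and observe that $u_1<u_2$ sharing the same $q$ forces $\hat f(u_2)\ge f(u_1)>q>\hat f(u_2)$, a contradiction, so $u\mapsto q_u$ is injective into $\mathbb{Q}$; the opposite-sign subset is symmetric.

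The main obstacle is the countability of $A_4$, which I handle by splitting $A_4=A_4^-\cup A_4^+$ according as some witness $r_v\in f^{-1}(v)\setminus\{u_v\}$ satisfies $r_v<u_v$ or $r_v>u_v$. For $v\in A_4^-$, the key observation is that for every $v'<v$ with $M_{v'}\ne\emptyset$, taking $u'=L(v')\in M_{v'}$ the identity $\phi(u',v')=0$ yields $f\le v'$ on $(-\infty,L(v'))$; since $f(r_v)=v>v'$, necessarily $r_v\ge L(v')$. Setting $m(v):=\sup\{L(v'):v'<v,\ M_{v'}\ne\emptyset\}$, this produces a non-degenerate open interval $(m(v),L(v))\ni r_v$; and for $v_1<v_2$ both in $A_4^-$, the fact that $v_1\in\{v':M_{v'}\ne\emptyset\}$ gives $L(v_1)\le m(v_2)$, forcing the intervals $(m(v_i),L(v_i))$ to be pairwise disjoint. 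Hence $A_4^-$ is countable, and $A_4^+$ is symmetric.

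With $A$ in hand the lemma is immediate: for $u,v\notin A$ with $\phi(u,v)=0$, one has $M_v=\{u\}$ (since $v\notin A_1$ and $u\in M_v$), $N_u=\{v\}$ (since $u\notin A_2$ and $v\in N_u$), then $f(u)=v$ (since $\ell(u)=\rho(u)=v\in\R$ and $u\notin A_3$), and finally $f^{-1}(v)\subset\{u\}$ (since $v\notin A_4$ and $M_v=\{u\}$), so $f^{-1}(v)=\{u\}$. The substantive difficulty really lies in the $A_4$ step: unlike the other three sets, these "super-bad" $v$'s display no local pathology at $u_v$ itself, and the disjointness argument using lower bounds for $r_v$ coming from constraints at \emph{earlier} values $v'<v$ is the crucial and non-obvious ingredient.
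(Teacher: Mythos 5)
Your proof is correct (up to a trivial adjustment: $r_v\ge m(v)$ need not be strict, so the witness interval in the $A_4$ step should be $[m(v),L(v))$ rather than $(m(v),L(v))$, which changes nothing in the pairwise-disjointness argument). However, it is considerably longer than the paper's and takes a genuinely different route. The paper works directly with the two increasing functions $g(u)=\sup_{r<u}f(r)$ and $h(u)=\inf_{r>u}f(r)$ (your $\ell$ and $\rho$) and takes $A$ to be the union of their sets of discontinuity points with the sets of values at which each has a nondegenerate level set --- all four pieces being countable by the two standard facts about monotone functions. The whole argument then collapses to two observations: continuity of $g$ and $h$ at $u$, combined with the trivial sandwich $\inf_{r\ge u}f\le f(u)\le\sup_{r\le u}f$ and $g(u)\le v\le h(u)$, forces $g(u)=h(u)=f(u)=v$; and any additional $\tilde u\ne u$ with $f(\tilde u)=v$ forces $h$ (if $\tilde u>u$) or $g$ (if $\tilde u<u$) to be constant $=v$ on the nondegenerate interval between $u$ and $\tilde u$, which is excluded. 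In particular, your $A_4$ --- which you single out as the ``substantive difficulty'' with ``no local pathology at $u_v$'' --- is in fact precisely the flat-level-set case: if $r_v<u_v$ and $f(r_v)=v$, then $\ell\equiv v$ on $(r_v,u_v]$, so $v$ is a flat value of $\ell$, and countability is immediate. Your disjoint-interval argument for $A_4^{\pm}$ using lower bounds from smaller $v'$ is clever and correct, but it re-derives by hand a fact that the monotone-function viewpoint gives for free; and similarly $A_1,A_2,A_3$ are all subsumed in the paper's discontinuity\slash flat-value decomposition (e.g.\ $\ell(u)<\rho(u)$ with $\phi(u,v)=0$ forces $g(u^+)\ge\rho(u)>g(u)$, a jump of $g$; and $\ell(u)=\rho(u)\in\R$ with $f(u)\ne\ell(u)$ likewise forces a jump). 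So the two arguments reach the same destination, yours as an explicit four-part case analysis with separate countability proofs, the paper's as a one-paragraph reduction to two classical facts.
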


\begin{proof}
  If $\phi(u,v) = \inf_{r\in \R}(u - r)(v - f(r)) = 0$, then
  \begin{displaymath}
    g(u) \set  \sup_{r<u} f(r) \leq v \leq \inf_{r>u} f(r) \set h(u).  
  \end{displaymath}
  Clearly,
  \begin{displaymath}
    \inf_{r\geq u} f(r) \leq f(u) \leq \sup_{r\leq u} f(r).
  \end{displaymath}
  Thus, if the increasing functions $g$ and $h$ are continuous and
  strictly increasing at $u$, then $f^{-1}(v) = \braces{u}$. To
  conclude the proof we just observe that the set of arguments, where
  an increasing function is discontinuous, and the set of values,
  where it is not strictly increasing, are countable.
\end{proof}

\begin{proof}[Proof of Theorem~\ref{th:7}]
  If $\nu = \law{Y}$ is $\mathcal{D}$-regular, then Theorem~\ref{th:4}
  yields an optimal map $X$ such that the law of $(X,Y)$ is an optimal
  plan. By Theorem~\ref{th:6}, there is an equilibrium $(Q,f)$.

  If the laws of $U = Y_1$ and $V=Y_2$ are atomless, then, by
  Theorem~\ref{th:5}, the optimal map $X=(X_1,X_2)$ is
  unique. Lemma~\ref{lem:14} shows that $S=X_2$ is the unique
  equilibrium price: $S=f(R)$, where $R=Q+U$.

  Let $\phi$ be the function defined in Lemma~\ref{lem:15} and
  $G\in \mathfrak{M}$ be a maximizer for~\eqref{eq:18}. From the
  definition of the equilibrium and Theorem~\ref{th:6} we deduce that
  \begin{displaymath}
    (U-R)(V-f(R)) = (U-R)(V-S) = \phi(U,V) = \phi_G(U,V). 
  \end{displaymath}
  If $\phi(U,V)<0$, then the total order $R$ is clearly unique. If
  $\phi(U,V)=0$, then $R=U$ by Lemma~\ref{lem:15} and the continuity
  of the distributions of $U$ and $V$. Thus, the total order $R$ and
  insider's order $Q = R-U$ are unique.  By Theorem~\ref{th:6}, the
  uniqueness of $R$ and $S$ implies that $(R,S)$ is an optimal
  map. Hence $X_1 = R$.
\end{proof}

\appendix
\section{Closure of probability measures with bounded densities in
  $\cW_p(\R^d)$}
\label{sec:density-cw_2rd}

Let $p\geq 1$. We denote by $\cW_p(\R^d)$ the space of Borel
probability measures on $\R^d$ with finite $p$-th moments equipped
with the Wasserstein metric:
\begin{displaymath}
  W_p(\mu,\nu)=\left\lbrace \inf_{\gamma\in\Pi(\mu,\nu)}
    \int |x-y|^p d\gamma  \right\rbrace^{1/p}, 
\end{displaymath}
where $\Pi(\mu,\nu)$ is the family of Borel probability measures
$\gamma$ on $\R^d\times \R^d = \descr{(x,y)}{x,y \in \R^d}$ with
$x$-marginal $\mu$ and $y$-marginal $\nu$. We recall that
$\mathcal{W}_p(\R^d)$ is a complete separable metric space and that
$\mu_n \rightarrow \mu$ in $\cW_p(\R^d)$ if and only if
$\int f(x) d\mu_n \rightarrow \int f(x) d\mu$ for every continuous
function $f$ with polynomial $p$-th growth:
\begin{displaymath}
  |f(x)|\leq K(1+|x|^p), \quad x\in \R^d. 
\end{displaymath}

Let $\nu\in \mathcal{W}_p(\R^d)$ and $\mathcal{Q}_\infty(\nu)$ be the
family of Borel probability measures on $\R^d$ that have bounded
densities with respect to $\nu$:
\begin{displaymath}
  \mathcal{Q}_\infty(\nu) \set \descr{\mu \ll
    \nu}{\frac{d\mu}{d\nu} \in \mathcal{L}^\infty(\R^d)}. 
\end{displaymath}
Clearly, $\mathcal{Q}_\infty(\nu) \subset \mathcal{W}_p(\R^d)$. The
following result, used in the proof of our main Theorem~\ref{th:1},
describes the closure of $\mathcal{Q}_\infty(\nu)$ under $W_p$.

\begin{Theorem}
  \label{th:8}
  Let $p\geq 1$ and $\nu\in \mathcal{W}_p(\R^d)$. Then the closure of
  $\mathcal{Q}_\infty(\nu)$ in $\cW_p(\R^d)$ has the form:
  \begin{displaymath}
    \mathcal{S}_p(\nu) = 
    \descr{\mu\in \mathcal{W}_p(\R^d)}{\supp{\mu}\subset
      \supp{\nu}}. 
  \end{displaymath}
\end{Theorem}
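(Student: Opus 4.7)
The plan is to establish the two inclusions $\overline{\mathcal{Q}_\infty(\nu)} \subset \mathcal{S}_p(\nu)$ and $\mathcal{S}_p(\nu) \subset \overline{\mathcal{Q}_\infty(\nu)}$ separately. The first inclusion is routine: any $\mu \in \mathcal{Q}_\infty(\nu)$ is absolutely continuous with respect to $\nu$, so $\supp{\mu} \subset \supp{\nu}$. Since $W_p$-convergence implies weak convergence, and since $\supp{\nu}$ is closed, the Portmanteau theorem gives $\mu(U) \leq \liminf_n \mu_n(U) = 0$ for every open $U$ disjoint from $\supp{\nu}$, so the limit $\mu$ also has $\supp{\mu} \subset \supp{\nu}$. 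The $W_p$ convergence also ensures the limit lies in $\mathcal{W}_p(\R^d)$.

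For the second, harder inclusion, fix $\mu\in \mathcal{S}_p(\nu)$ and $\epsilon>0$. I would first reduce to the case where $\mu$ is compactly supported. Pick any $x_0 \in \supp{\nu}$ and for $R>0$ define
\begin{displaymath}
  \mu^R \set \mu|_{\braces{\abs{x}\leq R}} + \mu\left(\braces{\abs{x}>R}\right) \delta_{x_0}.
\end{displaymath}
Then $\supp{\mu^R} \subset \supp{\mu}\cup \braces{x_0} \subset \supp{\nu}$, and the coupling that is the identity on $\braces{\abs{x}\leq R}$ and sends the tail to $x_0$ shows that $W_p^p(\mu,\mu^R) \leq \int_{\abs{x}>R} \abs{x-x_0}^p d\mu \to 0$ as $R\to \infty$, by the finite $p$-th moment of $\mu$. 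Hence it suffices to approximate each compactly supported $\mu^R \in \mathcal{S}_p(\nu)$ in $W_p$ by elements of $\mathcal{Q}_\infty(\nu)$.

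For the compactly supported case, I would use a local smearing construction. Cover the compact set $\supp{\mu}$ by finitely many open balls $B(x_i,\epsilon)$, $i=1,\ldots,N$, with centers $x_i\in \supp{\mu} \subset \supp{\nu}$. Since each $x_i\in \supp{\nu}$, one has $\nu(B(x_i,\epsilon))>0$. Disjointify into a Borel partition $A_1,\ldots,A_N$ of $\supp{\mu}$ with $A_i \subset B(x_i,\epsilon)$, and set
\begin{displaymath}
  \mu_\epsilon \set \sum_{i=1}^N \mu(A_i) \frac{\nu|_{B(x_i,\epsilon)}}{\nu(B(x_i,\epsilon))}.
\end{displaymath}
This $\mu_\epsilon$ is absolutely continuous with respect to $\nu$ with density bounded by $\max_i \mu(A_i)/\nu(B(x_i,\epsilon))<\infty$, so $\mu_\epsilon \in \mathcal{Q}_\infty(\nu)$. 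The block-wise product coupling between $\mu|_{A_i}$ and $\mu(A_i)\nu|_{B(x_i,\epsilon)}/\nu(B(x_i,\epsilon))$ is supported in $B(x_i,\epsilon)\times B(x_i,\epsilon)$, giving
\begin{displaymath}
  W_p^p(\mu,\mu_\epsilon) \leq \sum_i \mu(A_i) (2\epsilon)^p = (2\epsilon)^p,
\end{displaymath}
so $W_p(\mu,\mu_\epsilon) \to 0$ as $\epsilon \to 0$.

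The main conceptual step is the second inclusion, and within it the key observation is that $\supp{\mu} \subset \supp{\nu}$ guarantees each ball $B(x_i,\epsilon)$ centered at a point of $\supp{\mu}$ has strictly positive $\nu$-mass, which is what makes the local smearing well-defined and produces a bounded density. The rest is the standard truncation-and-partition pattern; combining the two reductions yields $W_p$-convergent sequences in $\mathcal{Q}_\infty(\nu)$ for any $\mu \in \mathcal{S}_p(\nu)$.
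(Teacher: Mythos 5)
Your proposal is correct, and it reaches the result by a genuinely different route from the paper's.

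The paper first observes that $\mathcal{S}_p(\nu)$ is closed and convex, then handles the case of compact $\supp{\nu}$ by an abstract argument: on a compact set $W_p$-convergence coincides with weak convergence, so finitely supported measures are dense in $\mathcal{S}_p(\nu)$; since the closure of $\mathcal{Q}_\infty(\nu)$ is closed and convex, it suffices to show that every Dirac mass $\delta_y$ with $y\in\supp{\nu}$ is approachable, which is done by restricting $\nu$ to shrinking balls around $y$. For non-compact $\supp\nu$ the paper truncates \emph{both} $\mu$ and $\nu$ to balls $B_n(y)$, uses the compact case for each $\nu_n$, and notes $\mathcal{Q}_\infty(\nu_n)\subset\mathcal{Q}_\infty(\nu)$. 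Your proposal replaces this with a direct two-step construction: a truncation that dumps the tail mass onto a single Dirac at a point of $\supp\nu$ (rather than renormalizing), followed by a hands-on local smearing over a finite cover of a compact support. The local smearing builds the bounded-density approximant in one explicit formula with a block-product coupling that immediately yields $W_p(\mu,\mu_\epsilon)\leq 2\epsilon$. This avoids the appeal to convexity and density of finitely supported measures, and is arguably more transparent; the paper's version is shorter once one grants the standard facts about weak vs.\ $W_p$ convergence on compacts. Both hinge on the same key observation that every ball about a point of $\supp\nu$ has positive $\nu$-mass. One small inaccuracy in your writeup: the density of $\mu_\epsilon$ with respect to $\nu$ is bounded by $\sum_i \mu(A_i)/\nu(B(x_i,\epsilon))$, not the maximum, since the balls may overlap; this is still finite and does not affect the argument.
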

\begin{proof}
  If $\mu_n\to \mu$ in $\mathcal{W}_p(\R^d)$, then $\mu_n\to \mu$
  weakly and thus,
  \begin{displaymath}
    \mu(C) \geq \limsup_{n\to \infty} \mu_n(C),
  \end{displaymath}
  for every closed set $C$. In particular, if
  $(\mu_n)\subset \mathcal{S}_p(\nu)$, then
  \begin{displaymath}
    \mu(\supp{\nu}) \geq  \limsup_{n\to \infty} \mu_n(\supp{\nu})= 1
  \end{displaymath}
  and hence, $\mu \in \mathcal{S}_p(\nu)$. It follows that
  $\mathcal{S}_p(\nu)$ is closed in $\mathcal{W}_p(\R^d)$. Clearly,
  $\mathcal{S}_p(\nu)$ is convex.

  If $\supp{\nu}$ is compact, then restricted to $\mathcal{S}_p(\nu)$
  the convergence under $W_p$ is equivalent to the weak convergence
  and thus, the family of probability measures in $\mathcal{S}_p(\nu)$
  with \emph{finite} support is dense. Being a closed convex set,
  $\mathcal{S}_p(\nu)$ is then the closure of
  $\mathcal{Q}_\infty(\nu)$ in $\mathcal{W}_p(\R^d)$ if and only if
  every Dirac measure $\delta_y=\delta_y(dx)$ concentrated at
  $y\in \supp{\nu}$ is the weak limit of a sequence
  $(\mu_n) \subset \mathcal{Q}_\infty(\nu)$.  The sequence $(\mu_n)$
  with
  \begin{displaymath}
    \frac{d\mu_n}{d\nu}(x) = \frac1{\nu(B_{1/n} (y))}
    \ind{x\in B_{1/n}(y)},\quad n\geq 1, 
  \end{displaymath}
  where $B_r(y)$ is the ball of radius $r>0$ centered at $y$, has the
  required properties.

  If $\supp{\nu}$ is not compact, we approximate
  $\mu \in \mathcal{S}_p(\nu)$ by the sequence $(\mu_n)$ given by
  \begin{displaymath}
    \frac{d\mu_n}{d\mu}(x) = \frac1{\mu(B_n(y))} \ind{x\in B_n(y)}, \quad
    n\geq 1, 
  \end{displaymath}
  for some $y\in \supp{\mu}$. We have that
  $(\mu_n)\subset \mathcal{S}_p(\nu)$ and $\mu_n \to \mu$ under
  $W_p$. By what we have already proved, each $\mu_n$ belongs to the
  closure of $\mathcal{Q}_\infty(\nu_n)$ in $\mathcal{W}_p(\R^d)$,
  where
  \begin{displaymath}
    \frac{d\nu_n}{d\nu}(x) = \frac1{\nu(B_n(y))} \ind{x\in B_n(y)}, \quad
    n\geq 1.
  \end{displaymath}
  As $\mathcal{Q}_\infty(\nu_n) \subset \mathcal{Q}_\infty(\nu)$,
  $n\geq 1$, we deduce that the sequence $(\mu_n)$ belongs to the
  closure of $\mathcal{Q}_\infty(\nu)$ in $\mathcal{W}_p(\R^d)$. Same
  property holds for its $W_p$-limit $\mu$ and the result follows.
\end{proof}

\section{Properties of the function $\phi_G$}
\label{sec:dual-space-functions}

Let $G$ be a maximal monotone set: $G\in \mathfrak{M}$. In this
appendix, we collect the properties of the function
\begin{displaymath}
  \phi = \phi_G(y) \set \inf_{x\in G} c(x,y) = \inf_{x\in G}
  (x_1-y_1)(x_2-y_2), \quad y\in \R^2, 
\end{displaymath}
used throughout the paper.

\begin{Lemma}
  \label{lem:16}
  The function $\phi = \phi_G$ and its $c$-conjugate
  \begin{displaymath}
    \phi^c(x) = \inf_{y\in \R^2}(c(x,y) - \phi(y)), \quad x\in \R^2, 
  \end{displaymath}
  take values in $[-\infty,0]$, $\phi^c\leq \phi$, and
  \begin{displaymath}
    G = \descr{y\in \R^2}{\phi(y) = 0} = \descr{x\in \R^2}{\phi^c(x) = 0}. 
  \end{displaymath}
\end{Lemma}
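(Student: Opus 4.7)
The plan is to verify each of the three claims in turn, relying only on the definitions of $\phi_G$ and $\phi^c$ together with the fact that $G$ is monotone (so $c(x,x')\geq 0$ for $x,x'\in G$) and maximal (so $G\cup\{y\}$ fails to be monotone whenever $y\notin G$).

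First, I would establish $\phi_G\leq 0$ on $\R^2$ and $\{y:\phi_G(y)=0\}=G$. For $y\in G$, monotonicity gives $c(x,y)\geq 0$ for every $x\in G$, while the choice $x=y$ yields $c(y,y)=0$, so $\phi_G(y)=0$. For $y\notin G$, maximality produces some $x\in G$ with $c(x,y)<0$, which forces $\phi_G(y)<0$. This simultaneously proves $\phi_G\in[-\infty,0]$ and pins down the zero set.

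Second, I would show $\phi^c\leq \phi_G$. For any $x\in\R^2$ and any $z\in G$, the first step gives $\phi_G(z)=0$, so directly from the definition of the $c$-conjugate,
\begin{displaymath}
\phi^c(x)\;\leq\;c(x,z)-\phi_G(z)\;=\;c(x,z).
\end{displaymath}
Taking infimum over $z\in G$ yields $\phi^c(x)\leq \phi_G(x)\leq 0$, which places $\phi^c$ in $[-\infty,0]$ as well.

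Third, I would identify $\{x:\phi^c(x)=0\}=G$. If $x\in G$, then for every $y\in\R^2$ the definition of $\phi_G$ gives $c(x,y)\geq \inf_{z\in G}c(z,y)=\phi_G(y)$, hence $c(x,y)-\phi_G(y)\geq 0$, so $\phi^c(x)\geq 0$; combined with the bound from the second step, $\phi^c(x)=0$. Conversely, if $\phi^c(x)=0$, then $\phi_G(x)\geq\phi^c(x)=0$, and $\phi_G\leq 0$ forces $\phi_G(x)=0$, which by the first step means $x\in G$. I do not anticipate any real obstacle: the argument is a direct unpacking of definitions, with the only substantive ingredient being the use of maximality to produce a witness $x\in G$ with $c(x,y)<0$ when $y\notin G$.
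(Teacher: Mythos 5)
Your proposal is correct and follows essentially the same route as the paper: establish $\phi_G \le 0$ with zero set exactly $G$ (using monotonicity for the $y\in G$ case and maximality to produce a witness $x\in G$ with $c(x,y)<0$ when $y\notin G$), then bound $\phi^c\le\phi_G$ by testing against $z\in G$, and finally characterize the zero set of $\phi^c$ from the two one-sided inequalities. The only cosmetic difference is that the paper phrases the maximality step geometrically ("$G$ crosses the interior of the upper-left or lower-right quadrant relative to $y$"), whereas you invoke the abstract definition of maximality directly; the two are equivalent and yours is, if anything, slightly more self-contained.
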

\begin{proof}
  Let $y\in G$. As $G\in \mathfrak{M}$, we have that $c(x,y)\geq 0$,
  $x\in G$, and thus, $\phi(y)=0$.  Conversely, if $y\not\in G$, then
  the maximal monotone set $G$ crosses the interior of either the
  upper-left or the lower-right quadrants relative to $y$. If $z\in G$
  belongs to such intersection, then
  \begin{displaymath}
    \phi(y) \leq c(z,y) = (z_1-y_1)(z_2-y_2) < 0. 
  \end{displaymath}

  We have shown that $\phi<0$ on $\R^2\setminus G$ and $\phi=0$ on
  $G$. It follows that
  \begin{displaymath}
    \phi^c(x) \leq \inf_{y\in G}(c(x,y) - \phi(y)) = \inf_{y\in G}
    c(x,y) = \phi(x)\leq 0.  
  \end{displaymath}
  If $\phi^c(x)=0$, then $\phi(x)=0$ and thus, $x\in G$. Conversely,
  if $x\in G$, then $c(x,y)-\phi(y)\geq 0$, $y\in \R^2$, and
  therefore, $\phi^c(x) = 0$.
\end{proof}

We associate with $\phi$ the closed convex function
\begin{displaymath}
  \psi(y) = \psi_G(y) = y_1y_2 - \phi(y) = \sup_{x\in G}(x_1y_2 + x_2y_1 -
  x_1x_2), \quad y\in \R^2.  
\end{displaymath}
Clearly, $\phi$ and $\psi$ have same domains:
\begin{displaymath}
  \dom{\phi} = \descr{y\in \R^2}{\phi(y)>-\infty} = \descr{y\in
    \R^2}{\psi(y)<\infty} = \dom{\psi}. 
\end{displaymath}
For a convex set $A\subset \R^d$ we denote by $\closure{A}$,
$\interior{A}$, $\ri{A}$, and
$\boundary{A} = \closure{A}\setminus \ri{A}$ its respective closure,
interior, relative interior and relative boundary.

\begin{Lemma}
  \label{lem:17}
  The domain of $\phi$ is convex. If $G$ is either horizontal or
  vertical line, then $\dom{\phi} = G$. Otherwise, $\dom{\phi}$ has a
  non-empty interior:
  \begin{equation}
    \label{eq:32}
    \interior{\dom{\phi}} = \interior{P_1}\times \interior{P_2},
  \end{equation}
  where $P_i$ is the projection of $G$ on $x_i$-coordinate,
  $i=1,2$. If $y\in \boundary{\dom{\phi}}\cap \dom{\phi}$, then the
  relative interiors of the horizontal and vertical parts of
  $\boundary{\dom{\phi}}$ containing $y$ also belong to $\dom{\phi}$.
\end{Lemma}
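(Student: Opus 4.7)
The proof naturally splits into four steps, organized around the closed convex dual $\psi(y)=y_1y_2-\phi(y)=\sup_{x\in G}(x_1y_2+x_2y_1-x_1x_2)$, which, as a supremum of affine functions in $y$ over the nonempty set $G$, is closed convex and proper. Since $\dom\phi=\dom\psi$, convexity of $\dom\phi$ is immediate. For the line case, if $G=\{a\}\times\mathbb{R}$ then $\phi(y)=\inf_{x_2\in\mathbb{R}}(a-y_1)(x_2-y_2)$ equals $0$ when $y_1=a$ and $-\infty$ otherwise, giving $\dom\phi=G$; the horizontal case is symmetric.

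For $G$ neither horizontal nor vertical, the core is to prove the identity $\interior\dom\phi=\interior P_1\times\interior P_2$. For $\supset$, pick $y\in\interior P_1\times\interior P_2$ and choose $x^L,x^R\in G$ with $x^L_1<y_1<x^R_1$ and $x^D,x^U\in G$ with $x^D_2<y_2<x^U_2$. For any $x\in G$, split into the four quadrants relative to $y$: in the two quadrants where $(x_1-y_1)$ and $(x_2-y_2)$ share sign, $c(x,y)\geq 0$; in the other two, monotonicity of $G$ against the four reference points confines $(x_1,x_2)$ to a bounded region, yielding a finite lower bound on $c(x,y)$. Thus $\phi(y)>-\infty$, and since $\interior P_1\times\interior P_2$ is open, it lies in $\interior\dom\phi$.

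For the reverse inclusion I will show $\dom\phi\subset P_1\times P_2$. Suppose $y_1>\sup P_1=:b$ (the three other cases are symmetric; the case $b=+\infty$ is vacuous). Because $G=\graph(\partial f)$ for a closed proper convex $f$, the behavior of $G$ near $x_1=b$ is forced: if $b\in P_1$ then $\partial f(b)$ is a ray, so $\{b\}\times[f'_-(b),\infty)\subset G$; if $b\notin P_1$ then $f(b)=\infty$ and $f'(x_1)\to\infty$ as $x_1\to b^-$, so $G$ contains points with $x_1\to b^-$ and $x_2\to\infty$. In either case we find a sequence in $G$ along which $c(x,y)=(x_1-y_1)(x_2-y_2)\to-\infty$ (the first factor tends to $b-y_1<0$, the second to $+\infty$), giving $\phi(y)=-\infty$. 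Combined with convexity, this yields $\closure\dom\phi=P_1\times P_2$ and therefore $\interior\dom\phi\subset\interior P_1\times\interior P_2$, completing~\eqref{eq:32}.

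The boundary claim uses the fact that $\boundary\dom\phi\subset\boundary(P_1\times P_2)$, a union of at most four horizontal/vertical edges. Suppose $y=(y_1,c)\in\dom\phi$ with $c=\sup P_2<\infty$; the other edges are analogous. Let $y'_1\in\interior P_1$; I want $(y'_1,c)\in\dom\phi$. For $x\in G$ we have $x_2\leq c$, so if $x_1\leq y'_1$ then $(x_1-y'_1)(x_2-c)\geq 0$. For $x_1\geq y'_1$ I write $(x_1-y'_1)(x_2-c)=(x_1-y_1)(x_2-c)+(y_1-y'_1)(x_2-c)$; the first term is bounded below by $-K$ coming from $\phi(y)>-\infty$, and the second is controlled because on any bounded $x_1$-interval inside $\interior P_1$ the subdifferential values $x_2$ are bounded, while outside that interval $x_1\geq y_1$ and the first term absorbs the contribution. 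Hence $\phi(y'_1,c)>-\infty$, and the relative interior of the top horizontal edge is in $\dom\phi$. The main obstacle I anticipate is formulating the quadrant case analysis in Step 3 and the decomposition in Step 4 cleanly enough to avoid tedious subcase bookkeeping; the arguments themselves are elementary once the reference points $x^L,x^R,x^D,x^U$ are fixed.
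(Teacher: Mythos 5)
Your proof follows essentially the same route as the paper's: convexity via $\dom{\phi}=\dom{\psi}$, the two degenerate line cases, identity~\eqref{eq:32} by bounding the effective constraint region, and the boundary claim via the decomposition $(x_1-y_1')(x_2-c)=(x_1-y_1)(x_2-c)+(y_1-y_1')(x_2-c)$; the paper performs the identical decomposition on a vertical edge, phrased as a contradiction rather than directly. Two points should be tightened. First, in Step~3 one gets $\closure{\dom{\phi}}=\closure{P_1}\times\closure{P_2}$, not $P_1\times P_2$ (the latter need not be closed); this still yields $\interior{\dom{\phi}}\subset\interior{P_1}\times\interior{P_2}$ because $\interior{\closure{P_i}}=\interior{P_i}$ for intervals. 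Second, in Step~4 the clause ``while outside that interval $x_1\geq y_1$ and the first term absorbs the contribution'' is not a valid estimate: the first term is merely bounded in $[-K,0]$ and cannot absorb an unbounded second term, so both must be controlled separately. The short correct reason is that $y_1'\in\interior{P_1}$ together with the monotonicity of $G$ forces $x_2\geq m>-\infty$ for every $x\in G$ with $x_1\geq y_1'$, where $m=\inf\descr{z_2}{(y_1',z_2)\in G}$; hence $x_2-c\in[m-c,0]$ and the second term is uniformly bounded below by $-\abs{y_1-y_1'}\,\abs{m-c}$. This is exactly the boundedness fact the paper extracts (for the minimizing sequence $(x_1^n)$) in its contrapositive formulation.
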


\begin{proof}
  Being convex, the function $\psi=\psi_G$ has convex domain. As
  $\dom{\phi} = \dom{\psi}$, the domain of $\phi$ is also convex.

  We observe that $P_i$ is either a point or an interval. If
  $P_1 = \braces{a_1}$, then $G$ is a vertical line:
  $G = \descr{x\in \R^2}{x_1 = a_1}$. For $y\not \in G$ we have that
  $\abs{y_1-a_1}>0$ and
  \begin{displaymath}
    \phi(y) = \inf_{x_2 \in \R} (y_1-a_1)(y_2-x_2) = -\infty. 
  \end{displaymath}
  Thus, $\dom \phi = G$. The case where $P_2$ is a point and thus, $G$
  is a horizontal line is identical.

  We assume now that $\interior{P_i} = (a_i, b_i)$, where
  $-\infty \leq a_i < b_i \leq \infty$. If
  $y=(y_1,y_2)\in (a_1,b_1)\times (a_2,b_2)$, then the set
  $C\set\descr{x\in G}{c(x,y) \leq 0}$ is bounded and therefore,
  \begin{displaymath}
    \phi(y) = \inf_{x\in G} c(x,y) = \inf_{x\in C} c(x,y) >
    -\infty.   
  \end{displaymath}
  Conversely, suppose that $y$ does not belong to the closure of
  $P_1\times P_2$, say $y_1 < a_1$; other cases are covered
  similarly. Then $a_2= -\infty$ and hence,
  \begin{displaymath}
    \phi(y) = \inf_{x\in G} c(x,y) \leq (a_1 - y_1 ) (a_2-y_2) =
    -\infty. 
  \end{displaymath}
  We have proved~\eqref{eq:32}.

  For the last assertion of the lemma, we assume that $a_1>-\infty$
  and take $y=(a_1,y_2)$ and $z=(a_1,z_2)$ with $z_2<b_2$.  Given that
  $\phi(y)>-\infty$, we have to show that $\phi(z)>-\infty$. Indeed,
  otherwise there is a sequence $(x^n) \subset G$ such that
  \begin{displaymath}
    \lim_{n\rightarrow \infty} (a_1 - x_1^n) (z_2 - x_2^n)  = -\infty. 
  \end{displaymath}	  
  Since $z_2 < b_2$, the sequence $(x_1^n)$ is bounded and
  $x_2^n \rightarrow a_2 = -\infty$. It follows that
  \begin{displaymath}
    \phi(y) \leq \limsup_{n\rightarrow \infty} (a_1 - x_1^n)(y_2 -
    z_2)  + \lim_{n\rightarrow \infty}(a_1 - x_1^n)(z_2 - x_2^n) 
    = -\infty,
  \end{displaymath}
  and we obtain a contradiction.
\end{proof}

The closed convex function $\psi=\psi_G$ is lower semi-continuous on
$\R^2$ and is continuous on the interior of its domain.  The following
result shows that $\phi$ and $\psi$ are continuous relative to their
full domains.

\begin{Lemma}
  \label{lem:18}
  If $(y^n)\subset \dom{\psi}=\dom{\phi}$ and $y^n \to y$, then
  $\psi(y^n)\to \psi(y)$ and $\phi(y^n) \to \phi(y)$.
\end{Lemma}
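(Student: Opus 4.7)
Since $\psi(y) = y_1 y_2 - \phi(y)$ and $y \mapsto y_1 y_2$ is continuous, it suffices to prove $\phi(y^n) \to \phi(y)$. Upper semi-continuity of $\phi$ is immediate: $\phi$ is the pointwise infimum over $x \in G$ of the jointly continuous functions $y \mapsto c(x,y) = (x_1-y_1)(x_2-y_2)$, so $\limsup_n \phi(y^n) \le \phi(y)$ on all of $\R^2$.

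For lower semi-continuity I would argue by contradiction. Suppose along a subsequence $\phi(y^n) \to L < \phi(y)$, and for each $n$ select $x^n \in G$ with $c(x^n, y^n) \le \phi(y^n) + 1/n$, so $c(x^n, y^n) \to L$ as well. If $(x^n)$ admits a bounded subsequence, then since maximal monotone sets in $\R^2$ are closed one extracts a limit $x^* \in G$; continuity of $c$ forces $L = c(x^*, y) \ge \phi(y)$, a contradiction. Hence $|x^n| \to \infty$. After passing to a further subsequence we may assume $x_1^n \to +\infty$; the other three escape directions ($x_1^n \to -\infty$, or $x_1^n$ bounded with $x_2^n \to \pm\infty$) are handled by obvious symmetry. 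The maximal monotonicity of $G$ then forces $x_2^n \to b \set \sup\{z_2 : z \in G\} = \sup P_2 \in (-\infty, +\infty]$.

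Three sub-cases arise. If $b = +\infty$, both factors of $c(x^n, y^n)$ diverge to $+\infty$, contradicting $c(x^n, y^n) \to L \le 0$. If $b < +\infty$ and $y_2 < b$, then $x_2^n - y_2^n \to b - y_2 > 0$ while $x_1^n - y_1^n \to +\infty$, again giving $c(x^n, y^n) \to +\infty$, a contradiction. In the critical case $y_2 = b$, the point $y$ lies on the top edge of $\dom \phi$; by the proof of Lemma~\ref{lem:17} we have $\dom \phi \subset \closure{P_1} \times \closure{P_2}$, and the constraint $y^n \in \dom\phi$ forces $y_2^n \le b = y_2$. The identity
\begin{displaymath}
c(x^n, y^n) - c(x^n, y) = x_1^n(y_2 - y_2^n) + x_2^n(y_1 - y_1^n) + (y_1^n y_2^n - y_1 y_2)
\end{displaymath}
then has a non-negative first term (since $x_1^n > 0$ and $y_2 - y_2^n \geq 0$ eventually), while the second and third tend to $0$ because $x_2^n \to b$ is bounded and $y^n \to y$. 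Combined with $c(x^n, y) \ge \phi(y)$, this gives $\liminf_n c(x^n, y^n) \ge \phi(y)$, i.e.\ $L \ge \phi(y)$, the final contradiction.

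The main obstacle is this last sub-case, where $x_1^n(x_2^n - y_2^n)$ is of the indeterminate form $\infty \cdot 0$ and cannot be controlled in isolation. The resolution routes the estimate through the fixed reference value $c(x^n, y) \ge \phi(y)$ rather than controlling the indeterminate factor directly, and crucially exploits the one-sided membership of $y^n$ in $\dom\phi$ supplied by Lemma~\ref{lem:17}.
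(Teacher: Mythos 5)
Your argument is correct, but it takes a genuinely different route from the paper's. The paper passes to $\psi = \psi_G$ and leans on the convex-analysis fact that a closed proper convex function is continuous relative to any closed polytope contained in its domain; it then uses the last assertion of Lemma~\ref{lem:17} (the relative interiors of the boundary segments through $y$ are in $\dom\psi$) to locate a closed triangle in $\dom\psi$ that traps the tail of $(y^n)$, and cites that theorem once. Your proof works with $\phi$ directly and is more hands-on: after reducing to lower semi-continuity, you extract near-minimizers $x^n \in G$, dispose of the bounded case via closedness of maximal monotone sets, and analyse the escape-to-infinity alternatives using monotonicity of $G$ together with the inclusion $\dom\phi \subset \closure{P_1}\times\closure{P_2}$ from the proof of Lemma~\ref{lem:17}. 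Your handling of the critical sub-case $y_2 = b$ --- routing the indeterminate product $x_1^n(y_2 - y_2^n)$ through the fixed lower bound $c(x^n,y)\ge\phi(y)$ and exploiting the one-sided constraint $y_2^n \le y_2$ --- is the right move and is exactly what the paper's convexity machinery hides. The trade-off is that you avoid the polytope-continuity theorem but pay with a longer, four-way case split (plus the degenerate case where $G$ is a line, which your argument handles vacuously since then $\dom\phi = G$ and $\phi$ is identically zero there). Both proofs rely on Lemma~\ref{lem:17}, just for different structural consequences of it.
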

\begin{proof}
  It is sufficient to consider the case of the function $\psi$ and
  take $y\in \boundary{\dom{\psi}}$. If $\psi(y)=\infty$, then the
  result holds by the lower semi-continuity:
  \begin{displaymath}
    \liminf_{n\to \infty} \psi(y^n) \geq \psi(y) = \infty. 
  \end{displaymath}
  Thus, we assume that $\psi(y)<\infty$ or, equivalently, that
  $y\in \boundary{\dom{\psi}} \cap \dom{\psi}$. By Lemma~\ref{lem:17},
  the relative interiors of the horizontal and vertical parts of
  $\partial \dom{\psi}$ containing $y$ belong to $\dom{\psi}$. Hence,
  there is a closed triangle in $\dom{\psi}$ that contains
  $(y^n)_{n\geq n_0}$, for sufficiently large $n_0$. Being convex, the
  function $\psi$ is continuous on this triangle and the result
  follows.
\end{proof}

We define a multi-valued function
\begin{align*}
  {\Arg}_G(y) \set \argmin_{x\in G} c(x,y) =\descr{x\in G}{\phi(y) =
  c(x,y)}, \quad y\in \R^2,
\end{align*}
taking values in the closed (possibly empty) subsets of $G$, and
denote
\begin{align*}
  \dom{{\Arg}_G} \set \descr{y\in \R^2}{{\Arg}_G(y) \not=\emptyset}.
\end{align*}
Let $E^G_i = \cup_{t\in \mathcal{T}^G_i}E^G_i(t)$, $i=1,2$, be the
union of vertical and horizontal line segments of $G$;
see~\eqref{eq:23}. As the set $G$ is fixed, we write simply
\begin{displaymath}
  \Arg = {\Arg}_G,\quad  E_i = E^G_i, \quad E_i(t) = E^G_i(t), \quad
  \mathcal{T}_i = \mathcal{T}^G_i.
\end{displaymath}
The following lemma shows that for $y\in \dom{\Arg}\setminus G$ the
set $\Arg(y)$ can intersect $E_i(t)$ only at $\boundary{E_i(t)}$.  We
denote $\ip{x}{y} \set \sum_{i=1}^2 x_iy_i$, the scalar product of
$x,y\in \R^2$.

\begin{Lemma}
  \label{lem:19}
  Let $i\in \braces{1,2}$ and $t\in \mathcal{T}_i$. If
  $y\in \dom{\Arg}\setminus G$ and $x\in E_i(t)\cap \Arg(y)$, then $x$
  belongs to the boundary of ${E_i(t)}$ and
  \begin{displaymath}
    \ip{z-x}{y-x}> 0, \quad z\in \ri{E_i(t)}.   
  \end{displaymath}
\end{Lemma}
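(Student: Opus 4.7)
The plan is to reduce to the case $i=2$ by symmetry (the case $i=1$ is identical with the roles of coordinates swapped), so $E_2(t)=\{(s,t): s\in I\}$ for some nondegenerate interval $I\subset\R$, and then do a direct sign analysis on the two factors of $c$.

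First I would use the defining property of $\Arg(y)$: for every $z\in E_2(t)$,
\begin{displaymath}
  c(x,y) = (x_1-y_1)(t-y_2) \leq (z_1-y_1)(t-y_2) = c(z,y),
\end{displaymath}
which rearranges to $(z_1-x_1)(t-y_2)\geq 0$ for all $z\in E_2(t)$. Since $y\notin G$, Lemma~\ref{lem:16} gives $\phi(y)<0$, hence $c(x,y)=(x_1-y_1)(t-y_2)<0$; in particular $t-y_2\neq 0$ and the factors $x_1-y_1$ and $t-y_2$ have opposite signs.

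The sign of $t-y_2$ now determines which boundary of $E_2(t)$ contains $x$. If $t-y_2>0$, then $z_1\geq x_1$ for every $z\in E_2(t)$, so $x_1=\min I$ and $x$ is the left endpoint; moreover $x_1-y_1<0$, i.e.\ $y_1-x_1>0$. If $t-y_2<0$, then $z_1\leq x_1$ for every $z\in E_2(t)$, so $x$ is the right endpoint, and $y_1-x_1<0$. In both cases $x\in\boundary E_2(t)$, which is the first assertion of the lemma.

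For the inner product, for $z\in\ri E_2(t)$ we have $z_2=x_2=t$, so
\begin{displaymath}
  \ip{z-x}{y-x} = (z_1-x_1)(y_1-x_1).
\end{displaymath}
In the case $t-y_2>0$ one has $z_1-x_1>0$ (since $z$ is in the relative interior and $x$ is the left endpoint) and $y_1-x_1>0$ by the previous paragraph; in the case $t-y_2<0$ both factors are negative. Either way the product is strictly positive, giving the required inequality. The main subtlety — really the only one — is remembering to use $y\notin G$ to upgrade the defining nonstrict minimality $c(x,y)\leq c(z,y)$ to the strict sign information $(x_1-y_1)(t-y_2)<0$; without this, one would only conclude $x\in \closure E_2(t)$ and a nonstrict inner-product inequality.
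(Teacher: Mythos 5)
Your proof is correct, and it takes a genuinely different route from the paper's. The paper argues geometrically: it assumes WLOG that $i=2$ and $y$ lies above $G$, observes that the branch of the hyperbola $\{z: c(z,y)=\phi(y),\ z_1>y_1\}$ passes through $x$ and lies below $G$, and reads off from the picture that this forces $x$ to be the right endpoint of $E_2(t)$ and gives the sign of the inner product. You instead work purely algebraically: from $x\in\Arg(y)$ you extract $(z_1-x_1)(t-y_2)\ge 0$ for all $z\in E_2(t)$ by comparing $c(x,y)$ with $c(z,y)$, use $y\in\dom\Arg\setminus G$ (via Lemma~\ref{lem:16}) to get the strict inequality $c(x,y)=\phi_G(y)<0$ and hence $t\neq y_2$, and then let the sign of $t-y_2$ pin down which endpoint $x$ is and the sign of each factor in $\langle z-x,y-x\rangle=(z_1-x_1)(y_1-x_1)$. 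This buys a self-contained, case-complete argument that does not rely on the "which is only possible if" geometric step, at the cost of being less visual. One small imprecision in your closing remark: $E_2(t)$ is already closed (it is $G\cap\{x_2=t\}$ and $G$ is closed), so without the strict inequality one would still have $x\in E_2(t)$; what is actually lost is any conclusion about $x$ being an endpoint, not about closure. That remark is an aside and does not affect the validity of the proof.
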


\begin{proof}
  Without loss of generality we can assume that $i=2$ and that $y$
  stays above $G$. Then the increasing hyperbola
  \begin{displaymath}
    H = \descr{z\in \R^2}{c(z,y) = \phi(y), \; z_1>y_1}
  \end{displaymath}
  contains $x$ and lays below $G$, which is only possible if $x$ is
  the right boundary of the horizontal line segment $E_2(t)$. In this
  case,
  \begin{displaymath}
    \ip{z-x}{y-x} = (z_1-x_1)(y_1-x_1) >0, \quad
    z\in \ri{E_2(t)}, 
  \end{displaymath}
  and the result follows.
\end{proof}

For $x,y\in \R^2$ we denote by $L(x,y)$ the line segment connecting
$x$ and $y$:
\begin{displaymath}
  L(x,y) \set \descr{tx+(1-t)y}{t\in [0,1]}. 
\end{displaymath}

\begin{Lemma}
  \label{lem:20}
  Let $y^0$ and $y^1$ belong to $\dom{\Arg}\setminus G$ and stay above
  and below $G$, respectively. If $x\in \Arg(y^0)\cap \Arg(y^1)$, then
  $x$ belongs to the line segment $L(y^0,y^1)$ connecting $y^0$ and
  $y^1$.
\end{Lemma}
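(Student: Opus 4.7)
The plan is to exploit convexity to reduce the problem to a one-variable quadratic analysis along the segment $L(y^0, y^1)$.

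First, I would establish that the function $f(y) := c(x,y) - \phi_G(y)$ is convex and non-negative on $\dom \phi_G$. Convexity follows by rewriting
\begin{displaymath}
f(y) = \psi_G(y) + x_1 x_2 - x_1 y_2 - x_2 y_1,
\end{displaymath}
a sum of the convex $\psi_G$ with an affine function. Non-negativity comes from $x \in G$ so that $c(x,y) \geq \inf_{z\in G} c(z,y) = \phi_G(y)$. The hypothesis gives $f(y^0) = f(y^1) = 0$, and convexity combined with $f \geq 0$ forces $f \equiv 0$ on $L(y^0, y^1)$.

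Next, I would parametrize $y^t = (1-t)y^0 + t y^1$ and look at the scalar polynomial
\begin{displaymath}
q(t) := c(x, y^t) = (a_1 - t b_1)(a_2 - t b_2), \quad a_i := x_i - y^0_i, \quad b_i := y^1_i - y^0_i.
\end{displaymath}
By the first step $q(t) = \phi_G(y^t) \leq 0$ for $t \in [0,1]$, with $q(0), q(1) < 0$ because $y^0, y^1 \notin G$. Since $y^0$ lies above and $y^1$ below the maximal monotone set $G$, which is a connected unbounded curve separating $\R^2$, the segment $L(y^0, y^1)$ must meet $G$ at some $y^{t^*}$, $t^* \in (0,1)$, so $q(t^*) = 0$.

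A short case analysis on the leading coefficient $b_1 b_2$ then pinches $t^*$ to a double root of $q$. If $b_1 b_2 > 0$, the convex $q$ is bounded on $[0,1]$ by $\max(q(0), q(1)) < 0$, contradicting $q(t^*) = 0$. If $b_1 b_2 < 0$, the concave $q$ satisfies $q \leq 0$ on $[0,1]$ with interior zero $q(t^*) = 0$, forcing $t^*$ to be a double root, so $t^* = a_1/b_1 = a_2/b_2$. The degenerate case $b_1 b_2 = 0$ reduces $q$ to a linear or constant function, which cannot vanish in the interior of $[0,1]$ while being strictly negative at both endpoints. Hence we are in the double-root regime and
\begin{displaymath}
(y^{t^*})_i = y^0_i + t^* b_i = y^0_i + a_i = x_i, \quad i = 1, 2,
\end{displaymath}
giving $x = y^{t^*} \in L(y^0, y^1)$.

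The main subtlety I anticipate is the topological step that $L(y^0, y^1)$ actually crosses $G$: this rests on the fact that $G$, being the graph of the subdifferential of a proper closed convex function on $\R$, is a connected curve cleanly partitioning $\R^2$ into the ``above'' and ``below'' components containing $y^0$ and $y^1$ respectively.
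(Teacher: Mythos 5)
Your proof is correct, and it takes a genuinely different route from the paper's. The paper proves Lemma~\ref{lem:20} in three lines with a tangency argument: since $x$ lies in both $\Arg(y^0)$ and $\Arg(y^1)$, the increasing hyperbolas $H^0$ and $H^1$ each touch $G$ at $x$ from opposite sides, so they have the same tangent line there; equating the two slopes $(x_2-y^i_2)/(y^i_1-x_1)$ gives collinearity, and the constraints $x_1\in(y^0_1,y^1_1)$ force $x$ onto the segment. Your argument instead leans on convexity from the ground up: you observe that $f(y)=c(x,y)-\phi_G(y)=\psi_G(y)+\text{affine}$ is convex and nonnegative with $f(y^0)=f(y^1)=0$, so $f\equiv0$ on $L(y^0,y^1)$, reducing the question to the one-variable quadratic $q(t)=c(x,y^t)$; the topological crossing of $G$ by the segment supplies an interior zero of $q$, and the sign/concavity analysis pinches that zero to a double root at $x$. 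The paper's route is shorter and uses smoothness of the touching hyperbolas; yours avoids any appeal to tangent lines, exploits the convex potential $\psi_G$ (which aligns neatly with the material in Appendix~\ref{sec:dual-space-functions}), and as a byproduct shows that the segment crosses $G$ in exactly one point. One small remark: your case analysis on $b_1b_2$ is sound but partly redundant, since the hypotheses already force $y^0_1<x_1<y^1_1$ and $y^1_2<x_2<y^0_2$ (as in the paper's proof of Lemma~\ref{lem:19}), so $b_1>0>b_2$ and only the concave case occurs; with that observation in hand one could also skip the topological step entirely and note that both roots $a_1/b_1$ and $a_2/b_2$ of $q$ lie in $(0,1)$, so concavity together with $q\le0$ forces them to coincide.
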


\begin{proof}
  The conditions of the lemma imply that the increasing hyperbolas
  \begin{align*}
    H^0 &= \descr{z\in \R^2}{c(y^0,z)=\phi(y^0), \; z_1 > y^0_1}, \\
    H^1 &= \descr{z\in \R^2}{c(y^1,z)=\phi(y^1), \; z_1 < y^1_1},
  \end{align*}
  contain $x$ and stay below and above $G$, respectively. Hence, they
  have identical tangent lines at $x$.  Elementary computations show
  that the slope of the tangent line is given by
  \begin{displaymath}
    \frac{x_2 - y^0_2}{y^0_1-x_1} = \frac{x_2 - y^1_2}{y^1_1 - x_1}  
  \end{displaymath}
  and the result follows.
\end{proof}

For $y\in \interior{\dom{\phi}}$ the derivative $\nabla \phi(y)$ is
defined in the classical sense. For $y\in \boundary{\dom{\phi}}$ the
derivative $\nabla \phi(y)$ exists if it is the limit:
$\nabla \phi(y^n)\to \nabla \phi(y)$, for every sequence
$(y^n)\subset \interior{\dom{\phi}}\cap \dom{\nabla \phi}$ that
converges to $y$.  We write
\begin{align*}
  \dom{\nabla \phi} \set \descr{y\in \dom{\phi}}{\nabla
  \phi(y)\;\text{exists}}.
\end{align*}
By $D^c \set (D^c_1, D^c_2)$ we denote the differential operator
associated with the cost function $c=c(x,y)$:
\begin{align*}
  D^c_1\phi(y) \set y_1 - \frac{\partial \phi}{\partial y_2}(y), \quad
  D^c_2\phi(y) \set y_2 - \frac{\partial \phi}{\partial y_1}(y).
\end{align*}
Finally, let $E=E^G = E^G_1 \cup E^G_2$ be the union of the vertical
and horizontal line segments of $G$ and denote
\begin{align*}
  \udom{{\Arg}} \set \descr{y\in \R^2}{{\Arg}(y) \text{ is a
  singleton}}.
\end{align*}
We observe that
\begin{equation}
  \label{eq:33}
  E = G \setminus \udom{\Arg}. 
\end{equation}

\begin{Theorem}
  \label{th:9}
  We have that
  \begin{equation}
    \label{eq:34}
    \dom{\nabla \phi} \setminus \udom{\Arg} \subset
    \boundary{\dom{\phi}} \cap G = 
    \boundary{\dom{\phi}} \cap E.
  \end{equation}
  Conversely, the set difference
  $\udom{\Arg}\setminus \dom{\nabla \phi}$ has at most two points and
  these points belong to different linear parts of
  $\boundary{\dom{\phi}}$. If
  $y\in \dom{\nabla \phi} \cap \udom{\Arg}$, then $D^c\phi(y)$ is the
  only element of $\Arg(y)$ and
  \begin{equation}
    \label{eq:35}
    \phi(y) = c(D^c\phi(y),y) = \frac{\partial \phi}{\partial x_1}(y)
    \frac{\partial \phi}{\partial x_2}(y).
  \end{equation}
\end{Theorem}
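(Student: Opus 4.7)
The plan is to work with the closed convex function $\psi \set y_1 y_2 - \phi$, for which $\dom \psi = \dom \phi$ by Lemma~\ref{lem:17} and $\nabla \phi(y) = (y_2,y_1) - \nabla \psi(y)$, so that differentiability of $\phi$ at $y$ (either classically in the interior or in the extended sense on the boundary) is equivalent to that of $\psi$. The key identity is a subgradient characterization: for $x\in G$, the condition $x \in \Arg(y)$, i.e.\ $\phi(y) = c(x,y)$, rewrites as $\psi(y) = x_1 y_2 + x_2 y_1 - x_1 x_2$, which together with the standing inequality $\psi(z) \geq x_1 z_2 + x_2 z_1 - x_1 x_2$ (valid for every $z\in\R^2$ since $x\in G$) immediately yields $(x_2, x_1) \in \partial \psi(y)$.

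With this in hand, the interior portion of~\eqref{eq:34} and the formula~\eqref{eq:35} are immediate. For $y \in \interior \dom\phi$, the coercivity argument from the proof of Lemma~\ref{lem:17} shows $\Arg(y) \neq \emptyset$. If in addition $\nabla \phi(y)$ exists, then $\psi$ is classically differentiable at $y$, so $\partial \psi(y)$ reduces to the single element $\nabla \psi(y) = (y_2 - \partial \phi/\partial y_1,\, y_1 - \partial \phi/\partial y_2)$. Every $x \in \Arg(y)$ must then satisfy $(x_2, x_1) = \nabla \psi(y)$, i.e.\ $x = D^c\phi(y)$, so $\Arg(y) = \braces{D^c\phi(y)} \subset G$ and $y \in \udom{\Arg}$. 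A direct expansion gives $c(D^c\phi(y),y) = (\partial\phi/\partial y_1)(\partial\phi/\partial y_2) = \phi(y)$, which is~\eqref{eq:35}.

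The equality $\boundary \dom\phi \cap G = \boundary \dom\phi \cap E$ is a projection argument: if $x \in G \setminus E$, then $x$ lies on no horizontal or vertical segment of $G$, so for each $i=1,2$ one can find $G$-points with strictly smaller and strictly larger $i$-th coordinate, placing $x_i \in \interior P_i$ and therefore $x \in \interior \dom\phi$ by Lemma~\ref{lem:17}. To finish~\eqref{eq:34}, consider $y \in \boundary \dom \phi \cap \dom\nabla\phi$ with $y \notin G$, and pick $(y^n) \subset \interior\dom\phi$ with $y^n \to y$. By the interior case $\Arg(y^n) = \braces{D^c\phi(y^n)}$, and by the boundary definition of $\nabla\phi(y)$ together with Lemma~\ref{lem:18}, one has $D^c\phi(y^n) \to D^c\phi(y) \in G \cap \Arg(y)$. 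Uniqueness of the argmin at such $y$ rests on showing that any alternative $\tilde x \in \Arg(y)$ can be realized as the limit of $D^c\phi(\tilde y^n)$ along a second approximating sequence $\tilde y^n \to y$ from the interior, constructed via the hyperbolic-tangent picture of Lemmas~\ref{lem:19} and~\ref{lem:20} by perturbing $y$ so that the single tangent point of the hyperbola through the perturbed point sits near $\tilde x$; the uniqueness of the extended limit then forces $\tilde x = D^c\phi(y)$.

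For the converse, any $y \in \udom{\Arg} \setminus \dom\nabla\phi$ must lie in $\boundary \dom\phi$ by the contrapositive of the interior case. Non-differentiability of $\psi$ at $y$ means $\partial \psi(y)$ contains more than one element, whereas the single point of $\Arg(y)$ contributes only one subgradient of the form $(x_2, x_1)$; the additional subgradients arise from the convex-hull structure underlying the variational definition of $\psi$ and can only appear at the two extreme corners of the rectangle $\closure \dom \phi$ (corresponding to the two asymptotic ends of $G$), which necessarily lie on different linear parts of $\boundary \dom\phi$. The main technical obstacle is the uniqueness step in the previous paragraph, which demands a careful geometric construction of the alternative approximating sequence from the hyperbolic-tangent analysis, together with the tight combinatorial accounting that produces the at-most-two bound.
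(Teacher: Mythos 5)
Your overall framework coincides with the paper's: work with the closed convex function $\psi = y_1y_2 - \phi$, note that $x\in\Arg(y)$ forces $(x_2,x_1)\in\partial\psi(y)$, and read~\eqref{eq:35} off the gradient. But there are genuine gaps in your argument at the exact places where the paper does real work.

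The most important gap is the converse direction at \emph{interior} points, namely that $y\in\interior\dom\phi$ together with $\Arg(y)$ being a singleton implies $\nabla\phi(y)$ exists. You invoke ``the contrapositive of the interior case,'' but what you actually proved is $\interior\dom\phi\cap\dom\nabla\phi\subset\udom\Arg$; its contrapositive gives $y\notin\udom\Arg\Rightarrow y\notin\interior\dom\phi\cap\dom\nabla\phi$, which is not what you need. The needed implication $\interior\dom\phi\cap\udom\Arg\subset\dom\nabla\phi$ is genuinely separate, because the subgradient inclusion goes only one way: a unique argmin supplies one element of $\partial\psi(y)$, but you must still rule out \emph{other} subgradients not of the form $(x_2,x_1)$ with $x\in\Arg(y)$. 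The paper's Lemma~\ref{lem:21} handles exactly this by approaching $y$ along a one-parameter family $y^n$ in a rotated direction $e$, showing $\Arg(y^n)\to\Arg(y)$ and comparing with an arbitrary subgradient; no such argument appears in your proposal. Without this step your claim that $\udom\Arg\setminus\dom\nabla\phi\subset\boundary\dom\phi$ is unsupported, and the whole ``at most two'' discussion cannot get off the ground.

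The second gap is your treatment of uniqueness at boundary points in $\dom\nabla\phi\setminus G$. You acknowledge leaving the construction of the second approximating sequence $\tilde y^n$ as ``the main technical obstacle'' and gesture at a hyperbolic-tangent picture via Lemmas~\ref{lem:19}--\ref{lem:20}. The paper's actual mechanism (Lemma~\ref{lem:22} followed by Lemma~\ref{lem:24}) is much crisper: for any $x\in\Arg(y)$, the \emph{relative interior of the segment $L(x,y)$} lies in $\interior\dom\phi\cap\dom\nabla\phi$ and $D^c\phi$ is \emph{identically equal to $x$ on that open segment}. Feeding this specific approximating sequence into the definition of $\nabla\phi$ on the boundary immediately yields $x = D^c\phi(y)$, with no tangency construction required. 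This constancy of $D^c\phi$ along the argmin--point segment is the idea you are missing.

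Finally, your account of the ``at most two'' bound is not a proof. The exceptional points are not characterized as ``two extreme corners of the rectangle $\closure\dom\phi$'' — the actual exceptional point on a given linear part of $\boundary\dom\phi$ is, per Lemma~\ref{lem:25}, the last $\udom\Arg$ point for which no strictly earlier $\dom\Arg$ point exists on that same edge; this need not be a corner of the domain rectangle, and the argument requires the monotonicity/convergence analysis of the approximating sequence $(z^n)$ spelled out in the paper's Lemma~\ref{lem:25}. Your ``convex-hull structure'' sentence does not substitute for that analysis.

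In short: the parts you prove (the easy direction at interior points, formula~\eqref{eq:35}, and the $G\cap\boundary\dom\phi = E\cap\boundary\dom\phi$ identity) are correct and match the paper, but the converse interior direction, the boundary uniqueness, and the at-most-two bound are all missing or flawed.
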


We divide the proof of the theorem into lemmas.  We write $x\leq y$ if
$x_i\leq y_i$, $i=1,2$.  If $x,y\in G$ and $x\leq y$, then $G(x,y)$
denotes the segment of $G$ bounded by $x$ and $y$:
\begin{displaymath}
  G(x,y) \set \descr{z\in G}{x\leq z\leq y}. 
\end{displaymath}

\begin{Lemma}
  \label{lem:21}
  Let $y\in \interior{\dom{\phi}}$. Then $y\in \dom{\nabla \phi}$ if
  and only if $y\in \udom{\Arg}$. In this case, $D^c\phi(y)$ is the
  only element of $\Arg(y)$.
\end{Lemma}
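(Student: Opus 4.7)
The plan is to derive a two-sided sandwich for $\phi(y+h)-\phi(y)$ from the exact first-order expansion
\begin{equation*}
c(x,y+h)-c(x,y) = -(x_2-y_2)h_1 - (x_1-y_1)h_2 + h_1 h_2,
\end{equation*}
valid for every $x\in\R^2$. Choosing $x^*\in\Arg(y)$ gives $\phi(y+h)-\phi(y)\leq c(x^*,y+h)-c(x^*,y)$ (since $x^*\in G$), while choosing $x_h\in\Arg(y+h)$ gives the matching lower bound with $x_h$ in place of $x^*$. A preliminary step is to observe that for every $y'$ in a compact neighborhood of $y$ the set $\Arg(y')$ is non-empty and uniformly bounded: this follows from the argument in the proof of Lemma~\ref{lem:17}, since the sublevel set $\descr{x\in G}{c(x,y')\leq 0}$ remains bounded, uniformly over $y'$, as long as $y'$ stays in a compact subset of $\interior P_1\times\interior P_2=\interior\dom\phi$.

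For the implication $\udom{\Arg}\Rightarrow\dom{\nabla\phi}$, assume $\Arg(y)=\braces{x^*}$. I would first establish upper hemi-continuity of $\Arg$ at $y$: given $h_n\to 0$ and $x_{h_n}\in\Arg(y+h_n)$, local boundedness provides a convergent subsequence $x_{h_n}\to x_0$, and the continuity of $\phi$ on $\interior\dom\phi$ (Lemma~\ref{lem:18}) together with continuity of $c$ yields $c(x_0,y)=\phi(y)$, forcing $x_0=x^*$. Hence $x_h\to x^*$ as $h\to 0$, the two bounds in the sandwich differ by $o(\abs{h})$, and we read off
\begin{equation*}
\phi(y+h)-\phi(y) = -(x^*_2-y_2)h_1 - (x^*_1-y_1)h_2 + o(\abs{h}).
\end{equation*}
This proves $\phi$ is differentiable at $y$ with $\partial\phi/\partial y_1(y)=y_2-x^*_2$ and $\partial\phi/\partial y_2(y)=y_1-x^*_1$; equivalently $D^c\phi(y)=x^*$, which is precisely~\eqref{eq:35}.

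For the converse $\dom{\nabla\phi}\Rightarrow\udom{\Arg}$, only the upper bound is needed. Taking $h=tu$ for a unit vector $u$, dividing by $t$, and sending $t\to 0^+$, differentiability of $\phi$ at $y$ gives
\begin{equation*}
\nabla\phi(y)\cdot u\leq -(x^*_2-y_2)u_1-(x^*_1-y_1)u_2
\end{equation*}
for every $u$ and every $x^*\in\Arg(y)$. Applying the same inequality with $-u$ in place of $u$ promotes it to an equality, and this equality determines $x^*$ uniquely as $D^c\phi(y)$, so $\Arg(y)=\braces{D^c\phi(y)}$.

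The main obstacle I expect is the upper hemi-continuity of $\Arg$ at $y$, which hinges on the uniform local boundedness of minimizers and the continuity of $\phi$ at interior points. Both ingredients are available from Lemmas~\ref{lem:17} and~\ref{lem:18}, after which the sandwich argument and the algebra of the first-order expansion handle both directions of the equivalence together.
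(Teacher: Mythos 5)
Your proof is correct, and it takes a genuinely different route from the paper's. The paper works through the closed convex function $\psi_G(y) = y_1y_2 - \phi(y)$: it shows that $x\in\Arg(y)$ if and only if $(x_2,x_1)\in\partial\psi(y)$, and then invokes the standard convex-analytic fact that a finite convex function is differentiable at an interior point precisely when its subdifferential there is a singleton. The forward direction is immediate from that equivalence; the converse requires a directional-sequence argument to show that uniqueness of $\Arg(y)$ forces $\partial\psi(y)$ to be a singleton. You instead exploit the fact that $c(x,\cdot)$ is an affine-plus-bilinear function of $y$, which gives exact first-order (plus quadratic) control, and run a direct sandwich/envelope-theorem argument. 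The ingredients you rely on are the same as the paper's preparation (local boundedness of sublevel sets $\descr{x\in G}{c(x,y')\leq 0}$ via the structure of $\interior\dom\phi = \interior P_1\times\interior P_2$ from Lemma~\ref{lem:17}, and continuity of $\phi$ from Lemma~\ref{lem:18}, which together give nonemptiness and upper hemi-continuity of $\Arg$), but you then avoid the subdifferential machinery entirely. In the converse direction the upper estimate $\phi(y+tu)-\phi(y)\leq c(x^*,y+tu)-c(x^*,y)$, applied along $\pm u$ and divided by $t$, pins down $\nabla\phi(y)$ against any $x^*\in\Arg(y)$ and so forces uniqueness; in the forward direction, upper hemi-continuity makes the two sides of the sandwich coincide up to $o(\abs{h})$. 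The payoff of your version is that it is more elementary and makes the mechanism transparent; the paper's version has the advantage of fitting seamlessly into the $\psi$-based framework already in place in Appendix~\ref{sec:dual-space-functions}. Both are valid.
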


\begin{proof}
  From the structure of $\interior{\dom{\phi}}$ in Lemma~\ref{lem:17}
  we deduce the existence of $x^0,x^1\in G$ such that $x^0\leq x^1$,
  $y\in \interior{R(x^0,x^1)}$, and
  $R(x^0,x^1)\subset \interior{\dom{\phi}}$, where $R(x^0,x^1)$ is the
  rectangle with the diagonal $L(x^0,x^1)$.  Every $x\in G$ such that
  $c(x,y)\leq 0$ belongs to $G(x^0,x^1) = R(x^0,x^1) \cap G$. Hence,
  \begin{displaymath}
    \Arg(y) = \argmin_{z\in G(x^0,x^1)}c(z,y)
    = \argmax_{z\in G(x^0,x^1)}(y_1z_2+y_2z_1-z_1z_2).  
  \end{displaymath}
  As $G(x^0,x^1)$ is compact, $\Arg(y)$ is non-empty. If
  $x\in \Arg(y)$, then
  \begin{align*}
    \psi(u) - \psi(y) &= \sup_{z\in G}(u_1z_2+u_2z_1-z_1z_2) -
                        (y_1x_2+y_2x_1-x_1x_2) \\
                      &\geq x_2(u_1-y_1) + x_1(u_2-y_2), \quad u\in \R^2.
  \end{align*}
  It follows that $(x_2,x_1)$ belongs to $\partial \psi(y)$, the
  subdifferential of $\psi = \psi_G$ at $y$. Differentiability of
  $\phi$ (equivalently, of $\psi$) at $y$ then implies that $\Arg(y)$
  is a singleton and
  \begin{displaymath}
    x_1 = \frac{\partial \psi}{\partial y_2}(y) = y_1 - \frac{\partial
      \phi}{\partial y_2}(y) = D^c_1\phi(y), \quad x_2 =
    D^c_2\phi(y). 
  \end{displaymath}

  Conversely, let $x$ be the only element of $\Arg(y)$ and
  $\widetilde x = (\widetilde x_1, \widetilde x_2) \in \R^2$ be such
  that $(\widetilde x_2, \widetilde x_1) \in \partial \psi(y)$. We
  have to show that $x = \widetilde x$. We take a unit vector
  $e=(e_1,e_2)$ in $\R^2$ and define a sequence $(y^n)$ in $\R^2$ such
  that
  \begin{displaymath}
    y^n_1 = y_1 + \frac1n e_2, \; y^n_2 = y_2 + \frac1n e_1, \quad
    n\geq 1.  
  \end{displaymath}
  Let $n_0$ be an index such that $y^n \in \interior{R(x^0,x^1)}$,
  $n\geq n_0$.  By the first part of the proof, for $n\geq n_0$ the
  set $\Arg(y^n)$ is non-empty and belongs to the compact
  $G(x^0,x^1)$. Moreover, if $x^n \in \Arg(y^n)$ then
  $(x^n_2,x^n_1) \in \partial \psi(y^n)$.  It follows that
  \begin{align*}
    x^n_2(y^n_1-y_1) + x^n_1(y^n_2-y_2)
    &\geq \psi(y^n) - \psi(y) \\
    &\geq \widetilde x_2(y^n_1-y_1) + \widetilde x_1(y^n_2-y_2),
  \end{align*}
  and then that $\ip{x^n}{e} \geq \ip{\widetilde x}{e}$.  As $x$ is
  the only element of $\Arg(y)$ and
  \begin{displaymath}
    \psi(y^n) =  y^n_1x^n_2+y^n_2x^n_1-x^n_1x^n_2 \to \psi(y) =
    y_1x_2+y_2x_1-x_1x_2, 
  \end{displaymath}
  every convergent subsequence of $(x^n)$ goes to $x$ and then
  $x^n\to x$.  Hence, $\ip{x}{e} \geq \ip{\widetilde x}{e}$ and, as
  $e$ is an arbitrary unit vector in $\R^2$, we obtain that
  $x = \widetilde x$.
\end{proof}

\begin{Lemma}
  \label{lem:22}
  Let $y\in \dom{\Arg}\setminus G$ and $x\in \Arg(y)$. Then
  \begin{displaymath}
    \ri{L(x,y)} = \descr{ty + (1-t)x}{t\in (0,1)} \subset \dom{\nabla\phi}
  \end{displaymath}
  and $D^c\phi(z) = x$, $z\in \ri{L(x,y)}$. The slope of the line
  segment $L(x,y)$ is negative and has the form:
  \begin{displaymath}
    \frac{y_2-x_2}{y_1-x_1} =  \frac{z_2-x_2}{z_1-x_1} =
    \frac{\left(\frac{\partial \phi}{\partial 
          y_1}(z)\right)^2}{\phi(z)}
    = \frac{\phi(z)}{\left(\frac{\partial \phi}{\partial
          y_2}(z)\right)^2}, \quad z\in \ri{L(x,y)}. 
  \end{displaymath}
\end{Lemma}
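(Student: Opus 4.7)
The plan is to reduce the assertion to Lemma~\ref{lem:21} by showing that every $z\in \ri{L(x,y)}$ lies in $\interior{\dom{\phi}}$ with $\Arg(z) = \braces{x}$; Lemma~\ref{lem:21} then delivers $z\in \dom{\nabla \phi}$ and $D^c\phi(z) = x$, and the slope formulas will follow by bookkeeping from this together with $\phi(z) = c(x,z)$.

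The core algebraic step is the identity
\begin{displaymath}
  c(w,z) - c(x,z) = A + (1-t) B, \quad w\in G,\; z = tx + (1-t)y,
\end{displaymath}
with $A \set (w_1 - x_1)(w_2 - x_2)$ and $B \set (w_1-x_1)(x_2-y_2) + (w_2-x_2)(x_1-y_1)$. It is obtained by expanding $w-z = (w-x) + (x-z)$ and using $x - z = (1-t)(x-y)$. Maximal monotonicity of $G$ gives $A \geq 0$, while the same expansion at $t = 0$ combined with $x \in \Arg(y)$ gives $A + B \geq 0$; chaining these, $A + (1-t)B \geq tA \geq 0$, so $x \in \Arg(z)$. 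Equality forces $(1-t)(A+B) = 0$ and hence $A = B = 0$; since $c(x,y) = \phi(y) < 0$ entails $x_1 \neq y_1$ and $x_2 \neq y_2$, the vanishing of $A$ puts $w$ on a vertical or horizontal line through $x$, and the vanishing of $B$ then pins the remaining coordinate, giving $\Arg(z) = \braces{x}$.

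For the interior claim I invoke Lemma~\ref{lem:17}. The hypothesis $y \notin G$ rules out $G$ being a single vertical or horizontal line, so $\interior{\dom{\phi}} = \interior{P_1} \times \interior{P_2}$ is non-empty, where $P_i$ is the projection of $G$ on the $x_i$-coordinate. If $y \in \interior{\dom{\phi}}$, then $y_i \in \interior{P_i}$ and convexity of $P_i$ gives $z_i = tx_i + (1-t)y_i \in \interior{P_i}$ for $t\in (0,1)$. If instead $y \in \boundary{\dom{\phi}}\cap \dom{\phi}$, the last clause of Lemma~\ref{lem:17} locates $y$ on the relative interior of a horizontal or vertical piece of $\boundary{\dom{\phi}}$; combined with $x \in G$ and $x_1\neq y_1$, $x_2\neq y_2$, a short coordinate-by-coordinate check again yields $z_i \in \interior{P_i}$.

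Once Lemma~\ref{lem:21} applies, $D^c\phi(z) = x$ unpacks to $\partial\phi/\partial y_1(z) = z_2 - x_2$ and $\partial\phi/\partial y_2(z) = z_1 - x_1$, while $\phi(z) = c(x,z) = (z_1-x_1)(z_2-x_2)$; dividing the square of either partial by $\phi(z)$, or $\phi(z)$ by the square of the other, yields the two expressions for the slope, and collinearity of $x$, $z$, $y$ identifies this common value with $(y_2-x_2)/(y_1-x_1)$. Negativity is immediate from $(y_1-x_1)(y_2-x_2) = c(x,y) = \phi(y) < 0$. I expect the main obstacle to be the interior-of-domain step, where the boundary geometry encoded in Lemma~\ref{lem:17} must be unpacked carefully; by contrast, the identity $\Arg(z) = \braces{x}$ and the subsequent slope computations are largely mechanical once the decomposition $c(w,z) - c(x,z) = A + (1-t)B$ is in hand.
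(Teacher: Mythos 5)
Your proof is correct and takes a genuinely different route from the paper for the key uniqueness step. The paper's argument is geometric: after a WLOG to fix $y_1 < x_1$, it observes that the hyperbola through $x$ with center $y(t)=ty+(1-t)x$ nests strictly inside the hyperbola $H$ through $x$ with center $y$, which touches $G$ from below, so the inner hyperbola can meet $G$ only at $x$. Your argument replaces this picture with the algebraic identity $c(w,z)-c(x,z)=A+(1-t)B$, whose two pieces are tied directly to monotonicity of $G$ (giving $A\geq 0$) and to optimality at $y$ (giving $A+B\geq 0$), so that $A+(1-t)B=tA+(1-t)(A+B)\geq tA\geq 0$ with equality forcing $A=B=0$ and then $w=x$ from $x_1\neq y_1$, $x_2\neq y_2$. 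This is more explicit about why $\Arg(z)$ is a singleton — the geometric phrasing in the paper leaves the nesting claim to the reader — and it avoids the WLOG. On the other hand the picture in the paper is the one that motivates the whole ``hyperbolic tangent'' theme and is shorter to state.

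Two small points worth tightening. First, the sentence ``The hypothesis $y\notin G$ rules out $G$ being a single vertical or horizontal line'' is imprecise: what rules it out is $y\in\dom{\Arg}\subset\dom{\phi}$ together with the clause of Lemma~\ref{lem:17} that $\dom{\phi}=G$ when $G$ is such a line. Second, the interior step does not actually require the boundary-case clause of Lemma~\ref{lem:17}: since $x_i\in P_i$, $y_i\in\closure{P_i}$ (because $\dom{\phi}\subset\closure{P_1}\times\closure{P_2}$), $x_i\neq y_i$, and each $P_i$ is a nondegenerate interval, the strict convex combination $z_i$ lies in $\interior{P_i}$ for every $t\in(0,1)$ regardless of whether $y$ is an interior or a boundary point of $\dom{\phi}$. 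Folding the two cases together this way would shorten and strengthen that paragraph; the paper itself handles this with a one-line appeal to Lemma~\ref{lem:17}. The slope bookkeeping at the end matches the paper.
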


\begin{proof}
  We fix $t\in (0,1)$ and denote $y(t) = ty + (1-t)x$. From the
  description of $\interior{\dom{\phi}}$ in Lemma~\ref{lem:17} we
  deduce that $y(t)\in \interior{\dom{\phi}}$.  Without loss in
  generality we can assume that $y_1<x_1$. Then the hyperbola
  \begin{displaymath}
    H = \descr{z\in \R^2}{c(z,y) = \phi(y), \; z_1>y_1}
  \end{displaymath}
  contains $x$ and stays below $G$, while the hyperbola
  \begin{displaymath}
    H(t) = \descr{z\in \R^2}{c(z,y(t)) = c(x, y(t)), \; z_1>y_1(t)}
  \end{displaymath}
  contains $x$ and stays below $H$. It follows that $x$ is the only
  element of $\Arg(y(t))$. Lemma~\ref{lem:21} yields that
  $D^c\phi(y(t)) =x$. The last part of the lemma follows directly from
  the definition of $D^c\phi$ and the fact that
  $\phi(y(t)) = c(x,y(t)) = c(D^c\phi(y(t)),y(t))$.
\end{proof}

The following corollary of Lemma~\ref{lem:22} will also be used in the
proof of Theorem~\ref{th:10}.

\begin{Lemma}
  \label{lem:23}
  Let $y^0$ and $y^1$ be distinct points in $\dom{\Arg}\setminus G$
  and $x^i \in \Arg(y^i)$, $i=1,2$. Then either $x^0=x^1$ or the line
  segments $L(x^0,y^0)$ and $L(x^1,y^1)$ do not intersect.
\end{Lemma}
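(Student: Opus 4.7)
The plan is to assume the two segments share a point $p$ and show $x^0=x^1$ by splitting into cases according to where $p$ sits relative to the four endpoints $x^0,y^0,x^1,y^1$. The main tool throughout is Lemma~\ref{lem:22}, which says that on the relative interior of $L(x^i,y^i)$ the function $\phi$ is differentiable, $D^c\phi\equiv x^i$, and the slope is strictly negative with $\phi<0$.

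The cleanest case is when $p$ lies in $\ri{L(x^0,y^0)}\cap\ri{L(x^1,y^1)}$: Lemma~\ref{lem:22} gives $x^0=D^c\phi(p)=x^1$ immediately. If the two segments coincide on a subinterval I simply pick $p$ in that subinterval and apply the same argument.

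Next I would rule out the endpoint configurations that cannot occur. Since $x^i\in G$ and $y^i\notin G$, the equalities $x^0=y^1$ and $y^0=x^1$ are impossible; and $y^0=y^1$ is excluded by hypothesis. Suppose $p=x^0\in\ri{L(x^1,y^1)}$: Lemma~\ref{lem:22} forces $\phi(p)\ne 0$ (the slope of $L(x^1,y^1)$ is strictly negative, while the slope formula is $\phi(p)/(\partial\phi/\partial y_2(p))^2$), contradicting $\phi(x^0)=0$ since $x^0\in G$ (Lemma~\ref{lem:16}). The symmetric case $p=x^1\in\ri{L(x^0,y^0)}$ is identical. The only common-endpoint case that survives is $p=x^0=x^1$, which is the desired conclusion.

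The remaining mixed case is $p=y^0\in\ri{L(x^1,y^1)}$ (and its mirror). Here Lemma~\ref{lem:22} applied to $L(x^1,y^1)$ yields $y^0\in\dom{\nabla\phi}\cap\interior{\dom\phi}$ with $D^c\phi(y^0)=x^1$; then Lemma~\ref{lem:21} forces $\Arg(y^0)=\{x^1\}$, and since $x^0\in\Arg(y^0)$ we conclude $x^0=x^1$. The only mildly delicate point is recalling that the proof of Lemma~\ref{lem:22} actually places $\ri L(x^1,y^1)$ inside $\interior{\dom\phi}$ (via the rectangle argument of Lemma~\ref{lem:17}), which is what legitimizes invoking Lemma~\ref{lem:21} at $y^0$; this is the only step requiring some care, but it is essentially free from the material already established.
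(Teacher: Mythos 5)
Your proof is correct and rests on the same key fact as the paper's: on $\ri L(x^i,y^i)$ the map $D^c\phi$ is constant equal to $x^i$ (Lemma~\ref{lem:22}), so a shared relative-interior point immediately forces $x^0=x^1$. The paper dispatches the lemma in that single sentence and leaves the endpoint configurations implicit; you supply them explicitly, and correctly: $x^0=y^1$ and $y^0=x^1$ are impossible because $G$ and its complement are disjoint, $x^0\in\ri L(x^1,y^1)$ is impossible because $\phi<0$ there while $\phi(x^0)=0$, and $y^0\in\ri L(x^1,y^1)$ is resolved via Lemma~\ref{lem:21} once one observes (as the proof of Lemma~\ref{lem:22} shows) that this point lies in $\interior{\dom\phi}$. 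This is the same route, carried out with more care at the boundary cases.
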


\begin{proof}
  If $L(x^0,y^0)$ and $L(x^1,y^1)$ have common \emph{interior} point
  $z$, then Lemma \ref{lem:22} yields that $x^0= D^c\phi(z) = x^1$.
\end{proof}

\begin{Lemma}
  \label{lem:24}
  Let $y\in \dom{\nabla \phi}\setminus G$. Then $D^c\phi(y)$ is the
  only element of $\Arg(y)$.
\end{Lemma}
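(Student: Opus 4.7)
The plan is to split on whether $y$ lies in the interior or on the relative boundary of $\dom{\phi}$, because the definition of $\dom{\nabla\phi}$ and the earlier lemmas behave differently in the two cases. When $y\in \interior{\dom{\phi}}\cap \dom{\nabla \phi}$, Lemma~\ref{lem:21} directly yields $\Arg(y)=\braces{D^c\phi(y)}$ and the proof is finished.

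The substantive case is $y\in \boundary{\dom{\phi}}\cap \dom{\nabla\phi}$, for which I first observe that $\interior{\dom{\phi}}\neq \emptyset$; otherwise Lemma~\ref{lem:17} would force $\dom{\phi}=G$ and contradict $y\notin G$. To obtain existence of a minimizer I approximate: using the classical fact that a convex function is differentiable on a dense subset of the interior of its domain (applied to $\psi_G$), I pick a sequence $y^n\in \interior{\dom{\phi}}\cap \dom{\nabla\phi}$ with $y^n\to y$. Lemma~\ref{lem:21} gives $\Arg(y^n)=\braces{x^n}$ with $x^n=D^c\phi(y^n)$, and the defining property of $\dom{\nabla\phi}$ at a boundary point supplies $x^n\to D^c\phi(y)$. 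Closedness of $G$ yields $D^c\phi(y)\in G$, while continuity of $\phi$ on $\dom{\phi}$ from Lemma~\ref{lem:18} lets me pass to the limit in $\phi(y^n)=c(x^n,y^n)$ and conclude $\phi(y)=c(D^c\phi(y),y)$. Hence $D^c\phi(y)\in \Arg(y)$.

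For uniqueness I invoke Lemma~\ref{lem:22}. Given any $x\in \Arg(y)$, that lemma places the open segment $\ri{L(x,y)}$ inside $\dom{\nabla\phi}$ with $D^c\phi(z)=x$ for every such $z$. The side condition I must verify is that these $z$ actually lie in $\interior{\dom{\phi}}$, so that the boundary definition of $\nabla\phi$ at $y$ applies along $z\to y$. Since $y\notin G$ forces $x_i\neq y_i$ for both $i=1,2$ (otherwise $c(x,y)=0=\phi(y)$ would put $y\in G$), and since $x,y\in \dom{\phi}\subset P_1\times P_2$ by Lemma~\ref{lem:17}, each $z=tx+(1-t)y$ with $t\in(0,1)$ has $z_i$ strictly between $x_i$ and $y_i$, hence in $\interior{P_i}$; thus $\ri{L(x,y)}\subset \interior{\dom{\phi}}$. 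Letting $z\to y$ along this segment gives $x=D^c\phi(z)\to D^c\phi(y)$, so $x=D^c\phi(y)$.

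The only delicate point is the boundary case: the definition of $\dom{\nabla\phi}$ at $y\in \boundary{\dom{\phi}}$ only supplies continuity of $\nabla\phi$ along sequences lying in $\interior{\dom{\phi}}\cap \dom{\nabla\phi}$, so in both the existence and the uniqueness arguments above I must take care to produce approximating points that actually lie in the interior. Once that is arranged, the rest is routine.
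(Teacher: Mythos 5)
Your proof is correct and follows essentially the same route as the paper: reduce to the boundary case via Lemma~\ref{lem:21}, obtain existence by approximating along a sequence in $\interior{\dom\phi}\cap\dom{\nabla\phi}$ and passing to the limit with Lemma~\ref{lem:18}, and obtain uniqueness by appealing to Lemma~\ref{lem:22} to produce a segment along which $D^c\phi\equiv x$. The one place you go beyond the paper's wording is in explicitly checking that $\ri{L(x,y)}\subset\interior{\dom\phi}$ (via $x_i\neq y_i$ for $i=1,2$, which follows from $y\notin G$ and $\phi(y)=c(x,y)<0$); the paper leaves this implicit, but it is established in the proof of Lemma~\ref{lem:22} itself, so your re-derivation is harmless and in fact makes the logic a bit tighter.
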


\begin{proof}
  In view of Lemma~\ref{lem:21} we can further assume that
  $y\in \boundary{\dom{\phi}}$. Let $(y^n)$ be a sequence in
  $\interior{\dom{\phi}}\cap \dom{\nabla \phi}$ that converges to
  $y$. By Lemma~\ref{lem:21}, $D^c\phi(y^n)$ is the only element of
  $\Arg(y^n)$.  From the construction of $\nabla\phi$ on
  $\boundary{\dom{\phi}}$ and Lemma~\ref{lem:18} we deduce that
  \begin{align*}
    D^c\phi(y) &= \lim_{n\to \infty} D^c \phi(y^n) \in G, \\
    \phi(y) &= \lim_{n\to \infty} \phi(y^n) = \lim_{n\to \infty}
              c(D^c\phi(y^n),y^n) = c(D^c\phi(y),y).
  \end{align*}
  Hence, $D^c\phi(y) \in \Arg(y)$.  On the other hand, if
  $x\in \Arg(y)$, then Lemma~\ref{lem:22} allows us to choose the
  sequence $(y^n)$ so that $D^c\phi(y^n) = x$. Hence, $x= D^c\phi(y)$.
\end{proof}

\begin{Lemma}
  \label{lem:25}
  The set difference $\udom{\Arg}\setminus\dom{\nabla \phi}$ has at
  most two points and these points belong to different linear parts of
  $\boundary{\dom{\phi}}$.
\end{Lemma}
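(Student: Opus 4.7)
The strategy is to characterize the set $\udom{\Arg}\setminus\dom{\nabla \phi}$ as boundary points of $\dom\phi$ at which the minimizers for nearby interior points ``escape'' to infinity, classify the four possible escape directions, observe that at most two of them are mutually compatible, and show that each surviving escape type contributes at most one such point.

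First, by Lemma~\ref{lem:21}, $\udom{\Arg}\cap\interior{\dom{\phi}}\subset\dom{\nabla \phi}$, so any $y\in\udom{\Arg}\setminus\dom{\nabla \phi}$ lies in $\boundary{\dom{\phi}}\cap\dom{\phi}$. Writing $\Arg(y)=\{x\}$, I pick a sequence $y^n\in\interior{\dom{\phi}}\cap\dom{\nabla \phi}$ with $y^n\to y$ along which $\nabla\phi(y^n)$ fails to converge. Setting $x^n:=D^c\phi(y^n)\in G$, the identity $\nabla\phi(y^n)=(y^n_2-x^n_2,y^n_1-x^n_1)$ shows that $(x^n)$ itself does not converge. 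If $(x^n)$ were bounded, then Lemma~\ref{lem:18} together with the continuity of $c$ would force every subsequential limit of $(x^n)$ to satisfy $c(\cdot,y)=\phi(y)$ and hence to lie in $\Arg(y)=\{x\}$, so $x^n\to x$, a contradiction. Thus $(x^n)$ is unbounded.

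Second, I classify the unbounded behavior. Because $c(x^n,y^n)=\phi(y^n)\to\phi(y)\in(-\infty,0]$ and $x^n\in G$, the monotonicity of $G$ forces, along a subsequence, one of four cases:  $x^n_1\to+\infty$, $x^n_1\to-\infty$, $x^n_2\to+\infty$, or $x^n_2\to-\infty$. Working out what finiteness of $\lim c(x^n,y^n)$ requires shows that $x^n_1\to+\infty$ forces $b_1=+\infty$, $b_2<+\infty$, and $y_2=b_2$ (``top edge''); the three other cases symmetrically place $y$ on the bottom, right, or left edge. Since case (a) requires $b_2<+\infty$ while case (c) requires $b_2=+\infty$, these two cannot coexist; likewise (b) and (d) are mutually exclusive. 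Hence at most two of the four escape types occur, and the associated linear parts of $\boundary{\dom{\phi}}$ are automatically distinct.

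Third, each escape type produces at most one problematic point. On the top edge the function $y_1\mapsto\phi(y_1,b_2)$ is the infimum of affine functions of $y_1$ with non-negative slopes $b_2-x_2$, hence concave and non-decreasing; moreover it is bounded above by $-L$, where $L=\limsup_{x_1\to+\infty, x\in G}(b_2-x_2)x_1\in(0,+\infty)$ (finiteness of $L$ from $\phi>-\infty$, positivity from the existence of the escape). Concavity plus monotonicity force the level set $\{y_1:\phi(y_1,b_2)=-L\}$ to be a right half-line $[y_1^\ast,+\infty)$ or empty. For $y_1>y_1^\ast$ the value $-L$ is approached only asymptotically, so $\Arg(y_1,b_2)=\emptyset$ and such $y$ is not in $\udom{\Arg}$. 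For $y_1<y_1^\ast$ one has $\phi(y_1,b_2)<-L$, and escape in an interior approximating sequence would drive $c(x^n,y^n)$ toward a value $\geq -L$, contradicting $c(x^n,y^n)=\phi(y^n)\to\phi(y)<-L$; by step~1 this yields $y\in\dom{\nabla \phi}$. So only $y_1=y_1^\ast$ can be problematic.

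Applying the analogous argument to the other three escape types and combining with the incompatibility noted in step~2, one obtains at most two points in $\udom{\Arg}\setminus\dom{\nabla \phi}$, necessarily lying on different linear parts of $\boundary{\dom{\phi}}$. The main obstacle is step~3: the careful bookkeeping in defining the escape constant $L$ in general (allowing the $\limsup$ to strictly exceed the $\liminf$) and verifying that strict inequality $\phi(y)<-L$ really prevents escape in approaching interior minimizer sequences.
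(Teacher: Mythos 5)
Your argument takes a genuinely different route from the paper's, and it is essentially correct.  The printed proof is a trapping argument: given $y^1\in\udom{\Arg}$ with a companion $y^0\in\dom{\Arg}$ strictly between $y^1$ and the far end of its edge, the non-intersection of $c$-gradient-flow segments (Lemma~\ref{lem:23}, via Lemma~\ref{lem:22}) pins the minimizers $D^c\phi(z^n)$ of every interior approximating sequence $z^n\to y^1$ inside a compact arc $G(x^0,w)$, forcing convergence to the unique point of $\Arg(y^1)$ and hence differentiability; the corner is disposed of by a separate clause.  You instead characterize non-differentiability as escape to infinity of the interior minimizers, tie each of the four escape directions to exactly one edge of $\boundary{\dom\phi}$, observe that the compatibility constraints allow at most two, and use concavity and monotonicity of $\phi$ restricted to each edge, together with the escape constant $L$, to confine each escape type to a single candidate point.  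The two proofs exhibit the same geometric phenomenon from opposite sides: the paper shows where the minimizers \emph{can't} escape, you show where they \emph{must}.

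The obstacle you flagged in Step~3 is real but closeable.  For $y_1>y_1^\ast$: if $-L$ were attained at a finite $\hat x\in G$, then $c\bigl(\hat x,(y_1^\ast,b_2)\bigr)<c\bigl(\hat x,(y_1,b_2)\bigr)=-L$ would give $\phi(y_1^\ast,b_2)<-L$, contradicting $\phi(y_1^\ast,b_2)=-L$, which holds by upper semi-continuity and the half-line structure of the level set; so $\Arg(y_1,b_2)=\emptyset$ there.  For $y_1<y_1^\ast$, the inequality $0<y^n_2-x^n_2\leq b_2-x^n_2$ along an escaping subsequence gives $\liminf_n c(x^n,y^n)\geq-\limsup_n x^n_1(b_2-x^n_2)=-L$, contradicting $\phi(y)<-L$, and this survives $\liminf\neq\limsup$.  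Two points deserve an explicit line.  First, the claim that at most two escape types (hence at most two edges) coexist relies on the maximality of $G$, which forces $a_1>-\infty\Rightarrow a_2=-\infty$, $b_1<\infty\Rightarrow b_2=\infty$, and the symmetric implications; this is what makes $\boundary{\dom\phi}$ have at most two linear parts.  Second, the corner $(a_1,b_2)$ is automatically subsumed: it participates in at most one escape direction (if $\phi$ there is strictly below both escape constants, no escape occurs; if it equals one, it becomes that direction's unique candidate and coincides with $y_1^\ast$ or $y_2^{\ast\ast}$), so each candidate is assigned to exactly one edge and the ``different linear parts'' conclusion holds.
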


\begin{proof}
  From Lemma~\ref{lem:17} we deduce that
  $\interior{\dom{\phi}} = (a_1,b_1)\times (a_2,b_2)$, where
  $-\infty \leq a_i < b_i \leq \infty$ and $(a_i,b_i)$ is the interior
  of the projection of $G$ on the $x_i$-coordinate. Without loss of
  generality we can assume that $a_1>-\infty$.  Let $y^0$ and $y^1$ be
  such that $y^0_1 = y^1_1 = a_1$, $y^0_2 < y^1_2<b_2$ and
  $y^0\in \dom{\Arg}$, $y^1\in \udom{\Arg}$. We are going to show that
  $y^1\in \dom{\nabla \phi}$.  By doing so, we shall prove that the
  interior of each linear part of $\boundary{\dom{\phi}}$ has at most
  one element of $\udom{\Arg}/\dom{\nabla \phi}$.

  Let $(z^n)$ be a sequence in
  $\interior{\dom{\phi}}\cap \dom{\nabla\phi}$ that converges to
  $y^1$.  Then $\sup_n z^n_2 < b_2$ and there is $w\in G$ such that
  $\sup_n z^n_2 \leq w_2 < b_2$. In view of Lemma~\ref{lem:21},
  $u^n = D^c\phi(z^n)$ is the only element of $\Arg(z^n)$. If
  $x^0\in \Arg(y^0)$, then $y^1$ stays strictly above the line segment
  $L(x^0,y^0)$ and, as $z^n\to y^1$, we can assume that same property
  holds for $(z^n)$.  By Lemmas~\ref{lem:22} and~\ref{lem:24}, the
  line segment $L(z^n,u^n)$ has negative slope and can intersect
  $L(y^0,x^0)$ only at $x^0$. It follows that $u^n$ belongs to the
  compact set $G(x^0,w)$. Continuity of $\phi=\phi_G$ from
  Lemma~\ref{lem:18} yields that any convergent subsequence of $(u^n)$
  goes to the unique $x^1 \in \dom{\Arg(y^1)}$.  Hence,
  $y^1 \in \dom{\nabla \phi}$ and $x^1 = D^c \phi(y^1)$, by the
  definition of $\nabla \phi$ on $\boundary{\dom{\phi}}$.

  Similar arguments show that if the ``corner'' point
  $\widehat y = (a_1,b_2)\in \udom{\Arg}$ and there are
  $z^0, z^1 \in\dom{\Arg}$ that belong to the interiors of different
  linear parts of $\boundary{\dom{\phi}}$, then
  $\widehat y \in \dom{\nabla \phi}$.
\end{proof}

\begin{proof}[Proof of Theorem~\ref{th:9}]
  From Lemmas~\ref{lem:21} and~\ref{lem:24} we deduce that
  \begin{displaymath}
    \dom{\nabla \phi} \setminus \udom{\Arg} \subset
    \boundary{\dom{\phi}} \cap G.
  \end{displaymath}
  Lemma~\ref{lem:17} shows that the boundary of $\dom{\phi}$ is
  contained in the union of two lines and that each of these lines is
  either vertical or horizontal. It follows that
  \begin{displaymath}
    \boundary{\dom{\phi}} \cap G =
    \boundary{\dom{\phi}} \cap E
  \end{displaymath}
  and we obtain~\eqref{eq:34}. Lemma~\ref{lem:25} states the structure
  of $\udom{\Arg}/\dom{\nabla \phi}$. Let
  $y\in \dom{\nabla \phi} \cap \udom{\Arg}$. Accounting
  for~\eqref{eq:33} we deduce that
  \begin{displaymath}
    y \not\in \boundary{\dom{\phi}} \cap E = \boundary{\dom{\phi}} \cap
    G. 
  \end{displaymath}
  Lemmas~\ref{lem:21} and~\ref{lem:24} now yield that $D^c\phi(y)$ is
  the only element of $\Arg(y)$.  Finally, identity~\eqref{eq:35}
  holds by the definition of $D^c\phi$.
\end{proof}

We recall that $\mathcal{D}$ denotes the family of graphs of strictly
decreasing functions $h=h(t)$ defined on closed intervals of $\R$ such
that $h$ and its inverse $h^{-1}$ are Lipschitz functions. We allow
for a degenerate case where the domain of $h$ is just a point. Thus,
$\R^2\subset \mathcal{D}$.

\begin{Theorem}
  \label{th:10}
  The exception set
  \begin{equation}
    \label{eq:36}
    \dom{\Arg}\setminus \left(\dom{\nabla \phi} \cap
      \udom{\Arg}\right) = D\cup E,  
  \end{equation}
  where $D$ is a countable union of sets in $\mathcal{D}$ and $E=E^G$
  is the union of horizontal and vertical line segments of $G$.
\end{Theorem}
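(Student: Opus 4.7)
The plan is to split the exception set as $A_1\cup A_2$, with $A_1=\dom\Arg\setminus\udom\Arg$ and $A_2=\udom\Arg\setminus\dom\nabla\phi$. Lemma~\ref{lem:25} bounds $|A_2|\leq 2$; these points are absorbed into $D$ as singletons (degenerate members of $\mathcal{D}$). Identity~\eqref{eq:33} gives $A_1\cap G=E$, so the task reduces to covering $B\set A_1\setminus G$ by countably many sets in $\mathcal{D}$.

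The key geometric observation is: for any $y\in B$ and any two distinct $x^0,x^1\in\Arg(y)$, one has $x^0_1\neq x^1_1$ and $x^0_2\neq x^1_2$. Indeed, the identity $c(x^0,y)=c(x^1,y)=\phi(y)$ rewrites as
\[
(x^0_1-x^1_1)y_2+(x^0_2-x^1_2)y_1=x^0_1x^0_2-x^1_1x^1_2,
\]
so $x^0_1=x^1_1=:t$ would force $y_1=t$, hence $\phi(y)=0$ and $y\in G$, a contradiction; the second coordinate is symmetric. Since $G\in\mathfrak{M}$ is monotone, both differences share a sign, so the displayed equation defines a line through $y$ of \emph{finite, strictly negative} slope $-(x^0_2-x^1_2)/(x^0_1-x^1_1)$.

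Next I would decompose $B$ into countably many such line segments. By Lemma~\ref{lem:22}, for each $y\in B$ and $x\in\Arg(y)$, the open segment $\ri L(y,x)$ lies in $\udom\Arg\cap\dom\nabla\phi$ with $D^c\phi\equiv x$; Lemma~\ref{lem:23} forbids any two such open segments associated to distinct argmin points from crossing. Using these, I claim that the maximal portions of $B$ along which a given argmin pair is preserved --- the \emph{maximal crease segments} --- have pairwise disjoint relative interiors and hence form an at most countable family (each interior contains a point of $\mathbb{Q}^2$). Each such crease, being a subset of a line of finite strictly negative slope, is the graph of a strictly decreasing affine function on a closed interval with Lipschitz inverse, so it lies in $\mathcal{D}$.

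The main difficulty is this combinatorial claim. A common relative-interior point $y^*$ of two creases carrying distinct argmin pairs would lie in $\Arg(y^*)$ for three (or more) distinct points of $G$, giving three pairwise non-crossing negative-slope segments from $y^*$ to $G$ via Lemma~\ref{lem:23}. One must show that these constraints, together with the monotonicity of $G$, already force the two creases to coincide, and also verify that each crease for a fixed argmin pair is genuinely a line segment rather than a pathological subset of its carrier line. These steps follow a familiar 2D convex/monotone analysis pattern, but are where the bookkeeping becomes most delicate.
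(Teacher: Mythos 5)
There is a genuine gap in your countability step, and it reflects a structural misunderstanding of what the set $D$ looks like. Your plan is to cover $B = (\dom{\Arg}\setminus\udom{\Arg})\setminus G$ by ``maximal crease segments,'' i.e.\ maximal portions of $B$ along which a particular argmin pair $\{x^0,x^1\}\subset\Arg(y)$ stays fixed, each lying on a single line of strictly negative slope, and to get countability from pairwise disjoint relative interiors. But when $G$ is, say, the graph of a strictly convex increasing $C^2$ function, the cut-locus-type set $B$ is a genuine one-dimensional curve along which the argmin pair $\{x^0(y),x^1(y)\}$ varies continuously and injectively; every ``crease'' is then a \emph{single point}, and there are uncountably many of them. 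Even in situations where the creases were nondegenerate line segments, the argument ``each interior contains a point of $\mathbb{Q}^2$'' is invalid: a line segment in $\R^2$ has empty interior, and its relative interior need not contain any point of $\mathbb{Q}^2$ unless the carrying line does. Also note that the family $\mathcal{D}$ consists of graphs of bi-Lipschitz strictly decreasing functions, not of affine ones --- this is precisely because the pieces making up $D$ are curved, as Figure~\ref{fig:2} already indicates.

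The paper avoids this trap entirely. Instead of parametrizing by argmin pairs, Lemma~\ref{lem:26} shows that the curved triangles $\Delta(y,r,s)$ associated with $y\in\dom{\Arg}\setminus G$ are either nested or nearly disjoint, which yields a partial order $\prec$; Lemma~\ref{lem:27} then shows that each order interval $D(y^0,y^1)=\{y: y^0\prec y\prec y^1\}$ is a \emph{single} bi-Lipschitz strictly decreasing graph --- the Lipschitz bounds coming from boundedness of $\phi$, $1/\phi$, and $D^c\phi$ on a compact polygon, not from linearity. Countability is then obtained in Lemma~\ref{lem:28} by stratifying $D$ according to a quantitative lower bound $\epsilon$ on the gap between argmin points and observing that the set of $\prec$-minimal elements of $D(\epsilon)$ is countable, again via the nested/disjoint triangle property. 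If you want to salvage your approach, you would need to replace ``line segments'' by these curved order intervals and replace the $\mathbb{Q}^2$-point argument by the minimal-element count; at that point you are essentially reproving Lemmas~\ref{lem:26}--\ref{lem:28}.
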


We divide the proof into lemmas.  For $y\in \dom{\Arg}\setminus G$ and
the points $r\leq s$ in $\Arg(y)$, we denote by $\Delta(y,r,s)$ the
closed curved triangle bounded by the line segments $L(r,y)$,
$L(y,s)$, and the segment $G(r,s)$ of $G$; see Figure~\ref{fig:2}. If
$r=s$, then $\Delta(y,r,s) = L(r,y)=L(s,y)$; otherwise
$\interior{\Delta(y,r,s)}\not=\emptyset$.

\begin{Lemma}
  \label{lem:26}
  Let $y^0,y^1$ be distinct points in $\dom{\Arg}\setminus G$, let
  $r^i\leq s^i$ be in $\Arg(y^i)$, and denote
  $\Delta^i \set \Delta(y^i,r^i,s^i)$, $i=0,1$.
  \begin{enumerate}[label = {\rm (\alph{*})}, ref={\rm (\alph{*})}]
  \item \label{item:6} If $y^0\in \Delta^1$, then
    $\Delta^0\subset \Delta^1$.
  \item \label{item:7} If $y^0\not\in \Delta^1$ and
    $y^1\not \in \Delta^0$, then the intersection of ${\Delta^0}$ and
    ${\Delta^1}$ is at most one point, which is then either $r^1=s^0$
    or $s^1=r^0$.
  \end{enumerate}
\end{Lemma}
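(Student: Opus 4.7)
The plan rests on two non-crossing principles. First, Lemma~\ref{lem:23} shows that for distinct $y^0, y^1 \in \dom{\Arg}\setminus G$ and $x_i \in \Arg(y^i)$, the segments $L(x_0, y^0)$ and $L(x_1, y^1)$ either share an endpoint $x_0 = x_1$ or are disjoint. Second, the expansion used in the proof of Lemma~\ref{lem:22} gives $\phi((1-t)x + ty) = t^2\, c(x, y) < 0$ for $t \in (0, 1)$, $x \in \Arg(y)$, $y \notin G$, so $\ri{L(x, y)}$ is disjoint from $G$. Combining, each of the four open segments $\ri{L(r^i, y^i)}$, $\ri{L(s^i, y^i)}$ ($i = 0, 1$) is connected and avoids every arc of the opposite triangle's boundary, hence lies entirely in $\interior{\Delta^{1-i}}$ or in its exterior.

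For part (a), assume $y^0 \in \Delta^1$. If $y^0 \in \ri{L(r^1, y^1)}$ or $\ri{L(y^1, s^1)}$, Lemma~\ref{lem:22} forces $r^0 = s^0$ to equal $r^1$ or $s^1$, and $\Delta^0$ is a subsegment of that side of $\Delta^1$. Otherwise $y^0 \in \interior{\Delta^1}$, and then a neighborhood of $y^0$ along $L(r^0, y^0)$ meets $\interior{\Delta^1}$, so by the dichotomy $\ri{L(r^0, y^0)} \subset \interior{\Delta^1}$; thus $r^0 \in \Delta^1 \cap G = G(r^1, s^1)$. The same reasoning gives $s^0 \in G(r^1, s^1)$, so $\partial\Delta^0 \subset \Delta^1$, and a standard Jordan-curve argument (applied to the topological disk $\Delta^1$ bounded by the Jordan curve $L(r^1, y^1) \cup L(y^1, s^1) \cup G(r^1, s^1)$) yields $\Delta^0 \subset \Delta^1$.

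For part (b), assume $y^0 \notin \Delta^1$ and $y^1 \notin \Delta^0$. If $\ri{L(r^1, y^1)} \subset \interior{\Delta^0}$, its closure would contain $y^1$, placing $y^1 \in \Delta^0$, a contradiction; hence $\ri{L(r^1, y^1)}$ lies in the exterior of $\Delta^0$, and symmetrically for the other three open segments. It follows that the line-segment arcs of each triangle meet the other triangle only at their endpoints on $G$; combined with $G \cap \Delta^i = G(r^i, s^i)$, this gives $\Delta^0 \cap \Delta^1 \subset G(r^0, s^0) \cap G(r^1, s^1)$.

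The main obstacle is to rule out a non-degenerate $G$-overlap. Suppose $p$ lies in the relative interior of both $G(r^0, s^0)$ and $G(r^1, s^1)$. If both triangles sit on the same local side of $G$ at $p$, then a two-dimensional neighborhood of $p$ lies in $\interior{\Delta^0} \cap \interior{\Delta^1}$; any path from this neighborhood to $y^0$ inside $\Delta^0$ must exit $\Delta^1$, but by the preceding paragraph it can only exit through $G(r^0, s^0) \cap G(r^1, s^1)$, and tracing the path back by the dichotomy of the first paragraph shows $\partial \Delta^0 \subset \Delta^1$, which by part (a) forces $\Delta^0 \subset \Delta^1$ and contradicts $y^0 \notin \Delta^1$. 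In the opposite-sides case, applying Lemma~\ref{lem:5} to $(r^0, y^0)$ and $(r^1, y^1) \in A = \{(x,y) : c(x,y) = \phi_G(y)\}$, together with the fact that $G$ lies outside both hyperbolas $\{c(\cdot, y^i) = \phi_G(y^i)\}$ and these hyperbolas pass through $r^0, s^0$ and $r^1, s^1$ respectively, forces a crossing of the two hyperbolas inside the rectangle spanned by $y^0$ and $y^1$, contradicting Lemma~\ref{lem:5}. Hence no such $p$ exists, and by the ordering $r^i \leq s^i$ on $G$, a single-point $G$-intersection can only equal $r^1 = s^0$ or $s^1 = r^0$.
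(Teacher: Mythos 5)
Your argument for the same--side configuration is essentially the paper's one-line proof --- a failure of (a) or (b) forces two of the segments $L(r^i,y^i),L(s^i,y^i)$ to cross at an interior point, which is impossible by Lemma~\ref{lem:23} --- only with the supporting Jordan-curve topology spelled out explicitly. That much is fine.

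The step that fails is the opposite-sides branch of (b). You assert that a non-degenerate overlap of $G(r^0,s^0)$ and $G(r^1,s^1)$ forces the hyperbolas $\descr{z}{c(z,y^0)=\phi_G(y^0)}$ and $\descr{z}{c(z,y^1)=\phi_G(y^1)}$ to cross. They need not: since $\phi_G$ satisfies~\eqref{eq:19}, the identity in the proof of Lemma~\ref{lem:5} yields only the ordering $h^0\le h^1$ of the two hyperbolas over the strip $y^0_1<z_1<y^1_1$, and $G$ is then simply sandwiched between them on the overlap arc without any crossing. In fact the conclusion of (b) itself can fail when $y^0$ and $y^1$ straddle $G$: take the maximal monotone staircase
\[
G=\{0\}\times(-\infty,0]\;\cup\;[0,1]\times\{0\}\;\cup\;\{1\}\times[0,1]\;\cup\;[1,2]\times\{1\}\;\cup\;\{2\}\times[1,\infty),
\]
$y^0=(0,2)$ above $G$ and $y^1=(2,-1)$ below $G$. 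Then $\phi_G(y^0)=\phi_G(y^1)=-2$, $\Arg_G(y^0)=\{(1,0),(2,1)\}$, $\Arg_G(y^1)=\{(0,0),(1,1)\}$; with $r^0=(1,0)$, $s^0=(2,1)$, $r^1=(0,0)$, $s^1=(1,1)$ one has $y^0\notin\Delta^1$, $y^1\notin\Delta^0$, yet $\Delta^0\cap\Delta^1=\{1\}\times[0,1]$ is an entire segment, while the hyperbolas $z_2=2-2/z_1$ and $z_2=-1+2/(2-z_1)$ never meet. So the opposite-sides case cannot be established as stated: the paper's one-liner implicitly treats only the same-side configuration (which is all that is needed, since the partial order $\prec$ feeding Lemmas~\ref{lem:27} and~\ref{lem:28} never relates points on opposite sides of $G$), and your attempt to patch in the missing case is where the argument breaks down.
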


\begin{proof}
  If either~\ref{item:6} or \ref{item:7} fails to hold, then there are
  line segments $L^i \in \braces{L(r^i,y^i), L(s^i,y^i)}$, $i=0,1$,
  that intersect only at an interior point. We obtain a contradiction
  with Lemma~\ref{lem:23}.
\end{proof}

Lemma~\ref{lem:26}~\ref{item:6} yields a partial order relation on
$\dom{\Arg}\setminus G$: $y^0\prec y^1$ if
$y^0\in \Delta(y^1, r^1,s^1)$ for some $r^1\leq s^1$ in $\Arg(y^1)$.

\begin{Lemma}
  \label{lem:27}
  If $y^0,y^1$ belong to $\dom{\Arg}\setminus G$ and $y^0\prec y^1$,
  then
  \begin{displaymath}
    D(y^0,y^1) \set \descr{y\in \dom{\Arg}\setminus G}{y^0\prec y\prec
      y^1} \in \mathcal{D},   
  \end{displaymath}
  that is, $D(y^0,y^1)$ is the graph of a strictly decreasing function
  $h=h(t)$ on $[y^0_1,y^1_1]$ such that $h$ and its inverse $h^{-1}$
  are Lipschitz functions.
\end{Lemma}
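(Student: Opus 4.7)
The plan is to analyze $D(y^0, y^1)$ geometrically using Lemmas~\ref{lem:22}, \ref{lem:23}, and~\ref{lem:26} on how the triangles $\Delta(y, r, s)$ interact. First I verify that $\prec$ is a total order on $D(y^0, y^1)$: for $y, y' \in D(y^0, y^1)$, pick $r \le s$ in $\Arg(y)$ and $r' \le s'$ in $\Arg(y')$ with $y^0 \in \Delta(y,r,s) \cap \Delta(y',r',s')$; since $y^0 \notin G$, Lemma~\ref{lem:26}\ref{item:7} cannot apply (it would place the intersection on $G$), so Lemma~\ref{lem:26}\ref{item:6} applies and one triangle contains the other, giving $y \prec y'$ or $y' \prec y$.

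Second, I identify the graph structure. Assume without loss of generality that $y^1$ lies strictly above $G$; then $\Delta(y^1,r^1,s^1)$ lies weakly above $G$, so every $y \in D(y^0, y^1)$ is strictly above $G$. For such $y$, the set $\Arg(y)$ is contained in the branch $\{z \in G : z_1 > y_1,\ z_2 < y_2\}$ (as in the proof of Lemma~\ref{lem:22}), so $\Delta(y,r,s) \setminus \{y\}$ lies strictly in the open lower-right quadrant at $y$. Consequently $y \prec y'$ with $y \neq y'$ forces $y_1 > y'_1$ and $y_2 < y'_2$, so $y \mapsto y_1$ is injective on $D(y^0, y^1)$ and realizes $D(y^0, y^1)$ as the graph of a strictly decreasing function $h$ on a subset $T \subset [y^1_1, y^0_1]$ that contains both endpoints.

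Third, I establish Lipschitz bounds. For $y \prec y'$ in $D(y^0, y^1)$ with $y \neq y'$, the slope of $L(y', y)$ is pinched between the slopes of the two diagonals $L(y', r')$ and $L(y', s')$, both of which are negative and (by Lemma~\ref{lem:22}) depend continuously on $y'$ through $\phi$ and $\nabla \phi$. The containment $y^0 \in \Delta(y', r', s')$ rules out degeneration: a diagonal of slope close to $0$ or $-\infty$ would produce a triangle too narrow to cover the fixed off-$G$ point $y^0$. Combined with compactness of $\overline{D(y^0, y^1)} \subset \Delta(y^1, r^1, s^1)$ and continuity of $\phi$ on $\dom \phi$ (Lemma~\ref{lem:18}), this yields a uniform two-sided slope bound $-K \le \text{slope} \le -1/K$, which gives the Lipschitz estimates for $h$ and $h^{-1}$. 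To see that $T$ is the whole closed interval, I note $D(y^0, y^1)$ is closed (the graph of $\Arg$ is closed by Lemma~\ref{lem:18}, and $y^0 \prec \cdot \prec y^1$ passes to limits), so $T$ is closed; connectedness of $T$ then follows by constructing a continuous path of nested triangles interpolating from $\Delta(y^1,\Arg(y^1))$ down to $\Delta(y^0,\Arg(y^0))$ and tracking the apex within $D(y^0, y^1)$.

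The main obstacle is the Lipschitz bound of Stage~3: one must ensure the slope extremes stay uniformly away from $0$ and $-\infty$, which is delicate if $G(r^1, s^1)$ contains horizontal or vertical segments. The assumption $y^0, y^1 \notin G$ keeps $D(y^0, y^1)$ a positive distance from such degenerate loci, anticipating the peel-off into $E^G$ performed in Theorem~\ref{th:10}. The continuous-path construction underpinning the final stage also requires a selection argument but is otherwise a routine consequence of the nested triangle structure already established.
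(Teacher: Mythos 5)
The proposal follows the same overall route as the paper — total-order structure via Lemma~\ref{lem:26}, graph of a strictly decreasing function, uniform slope bounds via compactness, connectedness — but two steps are not carried out correctly.

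On the Lipschitz bounds, your ``a diagonal of slope close to $0$ or $-\infty$ would produce a triangle too narrow to cover the fixed off-$G$ point $y^0$'' is not the right mechanism: the width of $\Delta(y',r',s')$ is governed by both of its diagonals, so one of them degenerating does not by itself exclude $y^0$. What actually produces the two-sided slope bound is that $D(y^0,y^1)$ sits inside a compact set with closure disjoint from $G$, so $\phi$ is bounded away from $0$ there, while $D^c\phi$ is confined to the compact arcs $G(r^1,r^0)\cup G(s^0,s^1)$; combined with the slope identity from Lemma~\ref{lem:22}, $\frac{y_2-x_2}{y_1-x_1} = \frac{(\partial\phi/\partial y_1)^2}{\phi}$, these give the uniform bounds. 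The paper makes the ``positive distance from $G$'' explicit by constructing the polygon $P$ with vertices $\{z^1,y^1,z^0,y^0\}$; you appeal to $\closure{D(y^0,y^1)}\subset\Delta(y^1,r^1,s^1)$, but that triangle touches $G$, so the bound does not follow from it without further argument. This part is repairable with more care.

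The serious gap is the connectedness of the graph. You dismiss it as ``a routine consequence of the nested triangle structure'' and gesture at ``a continuous path of nested triangles\ldots tracking the apex,'' but this is the most delicate part of the paper's proof and is not produced. One must exhibit, for arbitrary $w^0\prec w^1$ in $D(y^0,y^1)$, a strictly intermediate point $w$. The paper does this by taking the points $z^0\in\ri L(r^1,y^1)$ and $z^1\in\ri L(y^1,s^1)$ where $L(s^1,z^0)$ and $L(r^1,z^1)$ cross at $y^0$, parametrizing $z(t)=(1-t)z^0+tz^1$, and running an intermediate-value argument on $t$: using the non-crossing property of Lemma~\ref{lem:23} together with continuity of $\phi$ (Lemma~\ref{lem:18}), it shows there is a unique $t^*\in(0,1)$ for which $\Arg(z(t^*))$ meets both $G(r^1,r^0)$ and $G(s^0,s^1)$, and then $w=z(t^*)\in D(y^0,y^1)$ with $y^0\prec w\prec y^1$ by Lemma~\ref{lem:26}. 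Without this or an equivalent construction, there is no reason, a priori, why $D(y^0,y^1)$ could not be the graph of a decreasing function on a Cantor-like subset of $[y^1_1,y^0_1]$.
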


\begin{figure}
  \centering
  \begin{tikzpicture}[scale = 4/10]
    % \draw[help lines] (0,-1) grid (24,12);
   
    \filldraw [black] (0,10) circle [radius=2pt] (10,0) circle
    [radius=2pt] (20,5) circle [radius=2pt] (10,7.5) circle
    [radius=2pt] (5,5) circle [radius=2pt];

    \draw (10,0) node [below right] {$r^1$} -- (0,10) node [left]
    {$y^1$} --(20,5) node [below right] {$s^1$};
    
    \filldraw [black] (10,5) circle [radius=2pt] (11,2) circle
    [radius=2pt] (13,4) circle [radius=2pt];

    \draw (11,2) node [below right] {$r^0$} -- (10,5) node [below
    left] {$y^0$} --(13,4) node [below right] {$s^0$};

    \draw[very thick,domain=9:10,smooth,variable=\x,black] plot
    ({\x},{-100/(\x) + 10});

    \draw[very thick,domain=20:22,smooth,variable=\x,black] plot
    ({\x},{-100/(\x) + 10}) node [above] {$G$};

    \draw[very thick,domain=11:13,smooth,variable=\x,black] plot
    ({\x},{-3/(\x-10) + 5});

    \draw[very thick,domain=10:11,smooth,variable=\x,black] plot
    ({\x},{(\x-10) + (\x-10)^2});

    \draw[very thick,domain=13:20,smooth,variable=\x,black] plot
    ({\x},{5 + 0.25*(\x-20) + 0.0578*(\x-20)^2 + 0.0061*(\x-20)^3});

    \draw[thin] (10,0) -- (10,7.5) node [above right] {$z^1$};
    \draw[thin] (20,5) -- (5,5) node [below left] {$z^0$};

    \draw[dashed, thin, gray] (10.2,0.24) -- (2, 8.733)
    --(18.6,4.747); \draw[dashed, thin, gray] (10.4,0.56) -- (4,7.6)
    --(17.2,4.620); \draw[dashed, thin, gray] (10.6,0.96) --
    (5.5,6.838) --(15.8,4.520); \draw[dashed, thin, gray]
    (10.8,1.44)-- (7.5,5.938) --(14.4,4.347); \draw[dashed, thin,
    gray] (10.9,1.71) -- (9,5.35) (14.4,4.347); \draw (6,6.6) node
    [above right] {$D$};

    \draw[very thick, domain=0:10,smooth,variable=\x,black] plot
    ({\x},{10-0.5*\x + \x*(\x-10)/60});
  \end{tikzpicture}
  \caption{The curve $D=D(y^0,y^1)$ separates the parts of the
    $c$-gradient flow of $\phi$ ending on the segments $G(r^1,r^0)$
    and $G(s^0,s^1)$.}
  \label{fig:2}
\end{figure}
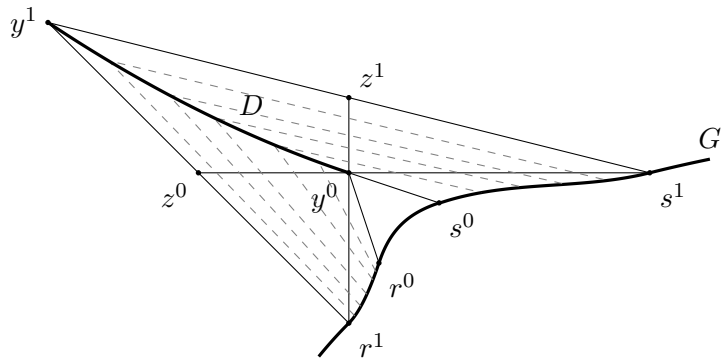

\begin{proof}
  We illustrate the proof on Figure~\ref{fig:2}. Without loss of
  generality we can assume that $y^0$ and $y^1$ are distinct points
  that stay above $G$. Let $r^i\leq s^i$ be in $\Arg(y^i)$. We have
  that $r^1\leq r^0\leq s^0\leq s^1$. If $y^0$ belongs to the line
  segment $L(y^1,s^1)$, then Lemma~\ref{lem:22} yields that
  \begin{displaymath}
    s^1 = r^0=s^0=D^c\phi(y^0)
  \end{displaymath}
  and then that $D = L(y^0,y^1)$.  Same lemma shows that the line
  segment $L(y^0,y^1)$ has a negative slope and thus, belongs to
  $\mathcal{D}$. The case, where $y^0\in L(y^1,r^1)$ is identical.

  Hereafter, we assume that $y^0\not\in L(y^1,s^1) \cup L(y^1,r^1)$
  or, equivalently, that $y^0\in \interior{\Delta(y^1,r^1,s^1)}$.
  Being a chord of the concave hyperbola
  \begin{displaymath}
    H^1 = \descr{z\in \R^2}{c(z,y^1) = \phi(y^1), \quad z_1>y_1},
  \end{displaymath}
  which touches $G$ from below, the line segment $L(r^1,s^1)$ stays
  below $G$.  It follows that $y^0$ belongs to the interior of the
  triangle with vertices $\braces{r^1,y^1,s^1}$. Hence, there are
  unique $z^1\in \ri{L(y^1,s^1)}$ and $z^0\in \ri{L(r^1,y^1)}$ such
  that the line segments $L(r^1,z^1)$ and $L(s^1,z^0)$ intersect at
  $y^0$.
  
  We observe that the convex polygon $P$ with the vertices
  $\braces{z^1,y^1,z^0,y^0}$ contains every
  $y\in \dom{\Arg} \setminus G$ such that $y^0\prec y\prec y^1$ and
  thus, contains $D$.  Being convex, $\psi$ is bounded on $P$. Hence,
  $\phi$ is bounded on $P$ as well. Moreover, as $P$ stays away from
  $G=\descr{x\in \dom{\phi}}{\phi(x) = 0}$, same boundedness property
  holds for $1/\phi$. If $y\in P\cap \dom{\nabla \phi}$, then
  Lemmas~\ref{lem:23} and~\ref{lem:24} show that $D^c\phi(y)$ belongs
  to the union of $G(r^1,r^0)$ and $G(s^0,s^1)$. In particular,
  $D^c\phi$ and then also $\nabla \phi$ are bounded on
  $P\cap \dom{\nabla \phi}$.  From Lemma~\ref{lem:22} we deduce the
  existence of negative constants $a$ and $b$ such that
  \begin{displaymath}
    -\infty < a\leq \frac{y_2-x_2}{y_1-x_1} \leq b < 0, \quad y\in P,
    \; x\in \Arg(y). 
  \end{displaymath}

  Let $y,z\in D$ be distinct. Lemma~\ref{lem:26} yields that either
  $y\prec z$ or $z\prec y$. Assuming that $ z\prec y$ we deduce the
  existence of $r,s \in \Arg(y)$ such that $r\leq s$ and
  $z\in \Delta(y,r,s)$.  The slope of $L(y,z)$ is then bounded from
  below by the slope of $L(y,r)$ and from above by the slope of
  $L(y,s)$, and thus is bounded in between by $a$ and $b$:
  \begin{displaymath}
    -\infty < a\leq \frac{y_2-z_2}{y_1-z_1} \leq b < 0. 
  \end{displaymath}
  Hence, the set $D$ has the required Lipschitz properties.
   
  It remains to be shown that the set $D$ is connected or,
  equivalently, that for every pair of distinct points $w^0\prec w^1$
  in $D$ there is $w\in D$, which is different from $w^0$ and $w^1$
  and such that $w^0\prec w \prec w^1$. Without loss of generality we
  can take $w^0=y^0$ and $w^1 = y^1$. We shall find the required $w$
  in $L(z^0,z^1)$.

  Let $z(t) = (1-t)z^0 + tz^1$, $t\in [0,1]$. From the
  non-intersection property of Lemma~\ref{lem:23} and the continuity
  of $\phi$ on its domain, we deduce that
  \begin{enumerate}
  \item If $t\in (0,1)$ and
    $\Arg(z(t)) \cap G(r^1,r^0)\not=\emptyset$, then
    $\Arg(z(s))\subset G(r^1,r^0)$, $0\leq s<t$.
  \item If $(t_n)\in [0,1]$ is such that $t_n \to t$ and
    $\Arg(z(t_n))\cap G(r^1,r^0)\not=\emptyset$, $n\geq 1$, then
    $\Arg(z(t)) \cap G(r^1,r^0)\not=\emptyset$.
  \end{enumerate}
  Similar properties (with obvious modifications in the first item)
  hold when $G(r^1,r^0)$ is replaced with $G(s^0,s^1)$. These
  properties readily yield the unique $t^* \in (0,1)$ such that
  $\Arg(z(t^*))$ intersects with \emph{both} $G(r^1,r^0)$ and
  $G(s^0,s^1)$. Clearly, $w = z(t^*)$ is different from both $y^0$ and
  $y^1$ and $y^0\prec w \prec y^1$, thanks to Lemma~\ref{lem:26}.
\end{proof}

\begin{Lemma}
  \label{lem:28}
  The set
  \begin{equation}
    \label{eq:37}
    D\set \descr{y\in \dom{\Arg}\setminus G}{\Arg(y) \;
      \text{contains at least 2 points}},
  \end{equation}
  if not empty, is a countable union of sets in $\mathcal{D}$. More
  precisely,
  \begin{displaymath}
    D = \cup_{n\geq 1} D(u^n,v^n) = \cup_{n\geq 1} \descr{y\in D}{u^n \prec y
      \prec v_n}, 
  \end{displaymath}
  for some $u^n \leq v^n$ in $D$, $n\geq 1$.
\end{Lemma}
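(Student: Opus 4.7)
The plan is to cover $D$ by countably many chains under the partial order $\prec$ and then extract a countable subcover of each chain by exploiting that nested curved triangles in $\R^2$ are strictly ordered by area. For $y\in D$, let $r_y=\min\Arg(y)$ and $s_y=\max\Arg(y)$ relative to the coordinate-wise order on $G$; since $\Arg(y)$ contains at least two points, $r_y<s_y$ and the curved triangle $\Delta_y\set\Delta(y,r_y,s_y)$ has non-empty interior.

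First I reduce to countably many totally ordered families. For each $q\in\mathbb{Q}^2$, set $T_q\set\{y\in D:q\in \interior\Delta_y\}$. Every $y\in D$ lies in some $T_q$, so $D=\bigcup_{q\in\mathbb{Q}^2}T_q$. For distinct $y,z\in T_q$, the intersection $\Delta_y\cap\Delta_z$ contains an open neighborhood of $q$, which rules out Lemma~\ref{lem:26}\ref{item:7}; Lemma~\ref{lem:26}\ref{item:6} then forces $y\prec z$ or $z\prec y$, so $(T_q,\prec)$ is totally ordered. Moreover, the area functional $y\mapsto\lvert\Delta_y\rvert$ is strictly $\prec$-monotone on $T_q$: if $y\prec z$ with $y\neq z$, Lemma~\ref{lem:26}\ref{item:6} forces $\Delta_y\subsetneq\Delta_z$, and proper inclusion of two compact planar regions of positive Lebesgue measure is strict in area. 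Hence $(T_q,\prec)$ order-embeds into $(\R,<)$ and admits a countable $\prec$-cofinal and $\prec$-coinitial subset $\mathcal{C}_q\subset T_q$, to which I append the $\prec$-maximum and $\prec$-minimum of $T_q$ whenever they exist.

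For each $y\in T_q$, cofinality and coinitiality of $\mathcal{C}_q$ (supplemented by the explicit extremes) supply $u,v\in\mathcal{C}_q$ with $u\preceq y\preceq v$. Since $y\prec y$ holds automatically ($y$ is a vertex of $\Delta_y$), we obtain $y\in D(u,v)$. Enumerating the countably many pairs $(u,v)\in\mathcal{C}_q\times\mathcal{C}_q$ over $q\in\mathbb{Q}^2$ produces the sequence $(u^n,v^n)\subset D\times D$ required by the lemma. The reverse inclusion $D(u^n,v^n)\subset D$ follows from Lemma~\ref{lem:26}\ref{item:6}: if $u^n\in D$ and $u^n\prec y$, then $\Delta_{u^n}\subset\Delta_y$, so $\Arg(y)$ contains the two distinct points $r_{u^n},s_{u^n}$ and hence $y\in D$. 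The main obstacle is the separability claim for $(T_q,\prec)$, which I reduce via strict $\prec$-monotonicity of area to the elementary fact that every subset of $\R$ has a countable cofinal-and-coinitial subset; the $\prec$-extremes of $T_q$ are not reachable by strict approximation from $\mathcal{C}_q$ and must be adjoined explicitly, but the reflexivity $y\prec y$ then lets them sit inside some $D(u,v)$.
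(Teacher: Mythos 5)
Your argument is correct in substance and takes a genuinely different route from the paper. The paper decomposes $D = \bigcup_n D(1/n)$ by the size of the spread of $\Arg(y)$ and then shows, via Lemma~\ref{lem:26}, that the set of $\prec$-minimal elements of each $D(\epsilon)$ is countable; those minimal elements supply the lower endpoints $u^n$ and the up-sets are split further using Lemma~\ref{lem:27}. You instead cover $D$ by the chains $T_q$ indexed over $q \in \mathbb{Q}^2$, use the area of the curved triangle $\Delta_y$ as a strictly $\prec$-monotone functional to order-embed each chain into $(\R,<)$, and then extract a countable set that is cofinal and coinitial. Both proofs ultimately rest on the non-intersection dichotomy of Lemma~\ref{lem:26} and on Lemma~\ref{lem:27} to identify the pieces with elements of $\mathcal{D}$; yours replaces the combinatorial counting of minimal elements in $D(\epsilon)$ by an explicit order-embedding into $\R$, which is arguably more transparent.

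Two details deserve tightening. First, ``proper inclusion of two compact planar regions of positive Lebesgue measure is strict in area'' is false as a blanket statement (e.g.\ adjoin a single point). What saves you here is that for $y,z \in D$ with $y \prec z$, $y \neq z$, the point $y$ lies in the \emph{interior} of $\Delta_z$: $y$ cannot lie on the $G$-arc (since $y\notin G$), and $y$ cannot lie on $\ri L(r_z,z)$ or $\ri L(z,s_z)$, for then by Lemma~\ref{lem:22} the set $\Arg(y)$ would be a singleton, contradicting $y\in D$. Since $y\in\partial\Delta_y$, a small neighborhood of $y$ inside $\interior\Delta_z$ meets $\interior\Delta_z\setminus\Delta_y$ in a set of positive measure, and strict monotonicity of area follows. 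Second, in your final sentence the inference ``$\Delta_{u^n}\subset\Delta_y$, so $\Arg(y)$ contains $r_{u^n},s_{u^n}$'' does not follow from the triangle inclusion; the base $G(r_{u^n},s_{u^n})$ lies in $G(r_y,s_y)$ but its endpoints need not be in $\Arg(y)$. If you want to show $u^n\in D$ and $u^n\prec y$ imply $y\in D$ (so that the set $\{y\in\dom\Arg\setminus G : u^n\prec y\prec v^n\}$ of Lemma~\ref{lem:27} coincides with the set $\{y\in D : u^n\prec y\prec v^n\}$ of Lemma~\ref{lem:28}), argue by contradiction: if $\Arg(y)=\{x\}$ were a singleton, then $\Delta_y=L(y,x)$, and $u^n\in\ri L(y,x)$ (it is neither $y$ nor in $G$), whence Lemma~\ref{lem:22} forces $\Arg(u^n)=\{x\}$, contradicting $u^n\in D$. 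With these repairs the proof is complete.
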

\begin{proof}
  Clearly, $D = \cup_{n\geq 1} D(\frac1n)$, where
  \begin{displaymath}
    D(\epsilon) \set \descr{y\in D}{\abs{x^0-x^1}\geq \epsilon \text{
        for some } x^0,x^1\in \Arg(y)}, \quad \epsilon>0. 
  \end{displaymath}

  Let $\epsilon>0$.  We denote by $\widehat D(\epsilon)$ the set of
  \emph{minimal} elements of $D(\epsilon)$ with respect to the order
  relation $\prec$. In other words,
  $\widehat y\in \widehat D(\epsilon)$ if any $y\in D(\epsilon)$ such
  that $y\prec \widehat y$ coincides with $\widehat{y}$. From
  Lemma~\ref{lem:26} we deduce that $\widehat D(\epsilon)$ is
  countable.  Let $y\in D(\epsilon)$. If $y$ is not a minimal element,
  then there is $y'\in D(\epsilon)$ such that $y'\prec y$, $y\not=y'$.
  Being contained in $\Delta(y,u,v)$ for some $u\leq v$ in
  $\Arg{(y)}$, the set $\descr{z\in D(\epsilon)}{z\prec y'}$ is
  bounded. By the continuity of $\phi= \phi_G$, this set is closed and
  hence, contains some $\widehat y \in \widehat D(\epsilon)$.  It
  follows that
  \begin{displaymath}
    D(\epsilon)= \cup_{\widehat y \in \widehat D(\epsilon)} \descr{y\in
      D}{\widehat y \prec y}.
  \end{displaymath}
  Finally, for $y\in D$, Lemmas~\ref{lem:26} and~\ref{lem:27} show
  that $\descr{z\in D}{y\prec z}$ is the graph of a strictly
  decreasing function $h$ such that $h$ and $h^{-1}$ are locally
  Lipschitz. The result readily follows.
\end{proof}

\begin{proof}[Proof of Theorem~\ref{th:10}]
  By Theorem~\ref{th:9} representation~\eqref{eq:36} holds if we add
  to the set $D$ given by~\eqref{eq:37} at most 2 points.
  Lemma~\ref{lem:28} yields the result.
\end{proof}

\begin{Lemma}
  \label{lem:29}
  Let $D$ be given by~\eqref{eq:37} and
  \begin{align*}
    S = \descr{y \in \dom{\Arg}}{ \Arg(y) \;\text{contains at least 3
    points}}. 
  \end{align*}
  Then $S$ is countable and there are Borel functions
  $\map{g_i}{D}{G}$, $i=1,2$, such that
  \begin{displaymath}
    \Arg(y) = \braces{g_1(y),g_2(y)}, \; g_1(y)\not = g_2(y), \quad
    y\in D \setminus S. 
  \end{displaymath}  
\end{Lemma}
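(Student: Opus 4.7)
The plan is to produce $g_1,g_2$ by taking the extreme points of $\Arg$ in the monotone order on $G$, and to deduce countability of $S$ from the curve decomposition of $D$ in Lemma~\ref{lem:28}.

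First I would build the selections. For each $y\in D$ the set $\Arg(y)\subset G$ is non-empty, closed, and, by the monotonicity of $G$, totally ordered coordinate-wise, so it admits a minimum $g_1(y)$ and maximum $g_2(y)$ under that order. Continuity of $c$ and upper semi-continuity of $\phi_G$ (as an infimum of continuous functions, cf.\ Lemma~\ref{lem:18}) give that the graph $\{(y,x)\in D\times G:c(x,y)=\phi_G(y)\}$ is closed, so the preimages $\{y\in D:g_1(y)\le x^0\}$ and $\{y\in D:g_2(y)\ge x^0\}$ are Borel in $D$ for every $x^0\in G$; hence $g_1,g_2$ are Borel. On $D\setminus S$ the cardinality constraint $|\Arg(y)|=2$ combined with $g_1(y)\ne g_2(y)$ (from $|\Arg(y)|\ge 2$) gives $\Arg(y)=\{g_1(y),g_2(y)\}$ as required.

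Second, the countability of $S$, which is most naturally read on $D$ (on $G$ itself $\Arg(y)$ can be an entire horizontal or vertical line segment from $E^G$, but that portion of the statement is never invoked), reduces via Lemma~\ref{lem:28} to showing that $S_n:=S\cap D(u^n,v^n)$ is countable for each $n$, where $D(u^n,v^n)$ is one of the strictly decreasing Lipschitz graphs of Lemma~\ref{lem:27}. For $y\in S_n$ pick a middle element $m(y)\in\Arg(y)\setminus\{g_1(y),g_2(y)\}$. The non-crossing Lemma~\ref{lem:23} together with Lemma~\ref{lem:22} (which would place any interior meeting point of two such segments into $\udom\Arg$, contradicting $y\in D$) imply that the relative interiors $\ri L(y,m(y))$ are pairwise disjoint as $y$ ranges over $S_n$.

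The main obstacle is converting this pairwise disjointness of open segments in $\R^2$ into countability of the index set, since in general such a family can be uncountable. My approach would use the tangent slope identity of Lemma~\ref{lem:22}: for fixed $m^*\in G$, every $y$ with $m^*\in\Arg(y)$ lies on the single straight line through $m^*$ of slope $\phi(z)/(\partial\phi/\partial y_2(z))^2$ evaluated at any interior $z\in L(y,m^*)$, and this line meets the strictly decreasing Lipschitz graph $D(u^n,v^n)$ in only finitely many points (a positive-length overlap with $D(u^n,v^n)$ would force $D^c\phi\equiv m^*$ on the overlap by Lemma~\ref{lem:22}, putting it in $\udom\Arg$ and contradicting $S_n\subset D$). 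Since distinct values of $m^*$ produce distinct slopes through $m^*$ and a single strictly decreasing Lipschitz curve crosses only countably many such tangent lines (a discrete bifurcation count in the 1-parameter family $t\mapsto y(t)$), the total count $|S_n|$ is countable, completing the proof.
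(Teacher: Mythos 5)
Your selections $g_1(y)=\min\Arg(y)$ and $g_2(y)=\max\Arg(y)$ in the order of $G$ are a sound variant of the paper's (which takes $g_1(y)=\max\descr{x\in\Arg(y)}{x\leq r}$ and $g_2(y)=\min\descr{x\in\Arg(y)}{x\geq s}$ for fixed $r\leq s\in\Arg(u)$), and Borel measurability is not the issue. The countability of $S$, however, breaks at two points. The intermediate claim---that for fixed $m^*\in G$ every $y$ with $m^*\in\Arg(y)$ lies on a single line through $m^*$---misreads Lemma~\ref{lem:22}: the slope identity there is evaluated at a point $z$ \emph{interior to the particular segment} $L(y,m^*)$, so it returns that segment's own slope rather than a slope determined by $m^*$ alone. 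The common-line conclusion requires $G$ to have a unique supporting slope at $m^*$; at a corner of $G$ the supporting slopes fill a nondegenerate interval, the tangent hyperbolas for different $y$'s touch $G$ at $m^*$ with different slopes, and the segments $L(y,m^*)$ sweep out a two-dimensional sector (Lemma~\ref{lem:20} gives collinearity only when $y^0,y^1$ lie on opposite sides of $G$). Moreover, even granting one line per $m^*$, the final step---``a strictly decreasing Lipschitz curve crosses only countably many such tangent lines''---is unsupported and in general false: the segments $L(y,D^c\phi(y))$ foliate a neighborhood of $G$, and a transverse Lipschitz curve crosses uncountably many leaves of a foliation.

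What closes the gap is the nesting statement of Lemma~\ref{lem:26}, which your argument never invokes. Restrict to one component $D'=D(u^n,v^n)$ from Lemma~\ref{lem:28}. If $y'\prec y$ in $D'$ then Lemma~\ref{lem:26}(a) gives $\Delta(y',g_1(y'),g_2(y'))\subset\Delta(y,g_1(y),g_2(y))$, so $g_1$ and $g_2$ are monotone along $D'$ and each has at most countably many discontinuities. At a continuity point $y$ of both, a middle element $m\in\Arg(y)$ with $g_1(y)<m<g_2(y)$ is impossible: points of $D'\setminus\braces{y}$ near $y$ must stay on one side of $L(y,m)$ (since $\ri L(y,m)\subset\udom\Arg$ is disjoint from $D$), and then by Lemma~\ref{lem:26} the one-sided limit of $g_1$ or of $g_2$ at $y$ is pinned at $m$, contradicting continuity. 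Hence $S\cap D'$ is contained in the countable set of discontinuities, and summing over $n$ gives the countability of $S$.
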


\begin{proof}
  In view of Lemma~\ref{lem:28}, it is sufficient to prove the result
  for the sets $D' = D(u,v)$ and $S' = S\cap D'$, where $u\prec v$ in
  $D$. Let $r\leq s$ be distinct elements of $\Arg(u)$. The functions
  \begin{align*}
    g_1(y) &= \max\descr{x\in \Arg(y)}{x\leq r},  \\
    g_2(y) &= \min\descr{x\in \Arg(y)}{x\geq s}, 
  \end{align*}
  map $D'$ to $G$ and are monotone with respect to the order relations
  $\prec$ on $D'$ and $\leq$ on $G$. Thus, their respective sets
  $(R_i)$ of discontinuities are countable. From Lemma~\ref{lem:26} we
  deduce that $S'\subset R_1 \cup R_2$ and from the continuity of
  $\phi$ that $g_i(y) \in \Arg(y)$, $y\in D'$. The proof readily
  follows.
\end{proof}

\bibliographystyle{plainnat} \bibliography{../../bib/finance}
% \bibliography{reference.bib}

\end{document}